\documentclass[reqno,11pt]{amsart}
\usepackage{amssymb, amsmath,latexsym,amsfonts,amsbsy, amsthm,mathrsfs}
\usepackage{graphicx}
\usepackage{color}
\usepackage{enumerate}
\usepackage{stmaryrd}
\usepackage{appendix}

\usepackage{geometry}
\geometry{left=1.68cm,right=1.68cm,top=2.5cm,bottom=2.5cm}

\usepackage[bookmarks=true,colorlinks,linkcolor=red]{hyperref}
\allowdisplaybreaks
\arraycolsep=1.5pt
%%%%%%%%%%%%%%%%%%%%%%%%%%%%%%%%%%%%%%%%%%%%%%%%%%%%%%%%%%%%%%%%%
\setlength{\oddsidemargin}{0mm} \setlength{\evensidemargin}{0mm}
\setlength{\topmargin}{-5mm} \setlength{\textheight}{220mm}
\setlength{\textwidth}{155mm}
%%%%%%%%%%%%%%%%%%%%%%%%%%%%%%%%%%%%%%%%%%%%%%%%%%%%%%%%%%%%%%%%%

%\usepackage[body={16.8cm,24cm}, top=3cm, bottom=3cm]{geometry}
%%\geometry{papersize={21.59cm,27.94cm}}

%\pagestyle{myheadings}\markboth {{\rm }} {{\rm Liquid Crystal Flow}}

%%%%%%%%%%%%%%%%%%%%%%%%%%%begin newcommand%%%%%%%%%%%%%%%%%%%%%%%%%%%%%%%

%%%%%%%%%%%%%%mathematics sign'combition%%%%%%%%%

%\def:={\buildrel\hbox{\footnotesize d\'ef}\over =}
%\def:=a{\buildrel\hbox{\footnotesize def}\over =}

\makeatletter

\newcommand{\Rmnum}[1]{\expandafter\@slowromancap\romannumeral #1@}

\newcommand{\NSF}{\mathrm{NSF}}

\def\ddt{\frac{\mathrm{d}}{\mathrm{d}t}}

%%%%%%%%%%%%%%% evironment %%%%%%%%%%%%%%%%%%%%%%%

\newtheorem{athm}{\bf \t}[section]
\newenvironment{thm} [1] {\def\t{#1}\begin{athm} \bf \rm} {\end{athm}}
\newcommand{\bthm}{\begin{thm}}
\newcommand{\ethm}{\end{thm}}

\newtheorem{theorem}{Theorem}[section]
\newtheorem{lemma}[theorem]{Lemma}

\newtheorem{remark}[theorem]{Remark}

\newtheorem{proposition}[theorem]{Proposition}

\newcommand{\beq}{\begin{equation}}
\newcommand{\eeq}{\end{equation}}
\newcommand{\ben}{\begin{eqnarray}}
\newcommand{\een}{\end{eqnarray}}
\newcommand{\beno}{\begin{eqnarray*}}
\newcommand{\eeno}{\end{eqnarray*}}
\newcommand{\bali}{\begin{aligned}}
\newcommand{\eali}{\end{aligned}}

\numberwithin{equation}{section}

%%%%%%%%%%%%%%%%%ABREVIATIONS%%%%%%%%%%%%%%%%%%%%%%%%%%%%%%%%%%%
%%%%%%%%%%%%% greek letter  %%%%%%%%%%%%%%%%%%%
\DeclareMathOperator*{\Ker}{Ker}
\DeclareMathOperator*{\Span}{Span}
\newcommand{\al}{\alpha}
\newcommand{\be}{\beta}

\newcommand{\ve}{\varepsilon}
\newcommand{\ga}{\gamma}

\newcommand{\La}{\Lambda}

%%%%%%%%%%%%%% math letter %%%%%%%%%%%%%%%%%
\newcommand{\ud}{\mathrm{d}}

\newcommand{\PP}{\mathbf{P}}

\newcommand{\one}{\mathbf{1}}
\newcommand{\Id}{\mathrm{Id}}

\newcommand{\CB}{\mathcal{B}}
\newcommand{\CS}{\mathcal{S}}

\newcommand{\CF}{\mathcal{F}}

\newcommand{\CT}{\mathcal{T}}

\newcommand{\CA}{\mathcal{A}}
\newcommand{\CP}{\mathcal{P}}

\newcommand{\CL}{\mathcal{L}}
\newcommand{\CQ}{\mathcal{Q}}

\newcommand{\BS}{{\mathbb{S}^2}}
\newcommand{\BR}{{\mathbb{R}^3}}

\newcommand{\MXe}{\mathscr{X}^{\varepsilon}}

\newcommand{\vp}{v^{\prime}}
\newcommand{\vs}{v_*}
\newcommand{\vsp}{v_*^{\prime}}

\newcommand{\fve}{f^{\varepsilon}}
\newcommand{\gve}{g^{\varepsilon}}

\newcommand{\Fp}{F^{\prime}}

\newcommand{\Fsp}{F^{\prime}_*}
\newcommand{\Fve}{F^{\varepsilon}}

\newcommand{\muq}{\mu_{\mathrm{q}}}
\newcommand{\weight}{\langle\cdot\rangle}
\newcommand{\fin}{f_{\textrm{in}}}
\newcommand{\Fin}{F_{\textrm{in}}}
\newcommand{\KerL}{\text{Ker}\,\mathcal{L}}
\setcounter{equation}{0}
\begin{document}
\allowdisplaybreaks[4]

\title[The Incompressible NSF Limits from BFD Equation]{The Incompressible Navier-Stokes-Fourier Limits from Boltzmann--Fermi--Dirac Equation for Low Regularity Data}

\author{Ning Jiang}
\address{School of Mathematics and Statistics, Wuhan University, Wuhan 430070, China}
\email{njiang@whu.edu.cn}
\author{Chenchen Wang
}
\address{School of Mathematics and Statistics, Wuhan University, Wuhan 430070, China}
\email{ccwangmath@whu.edu.cn}

\author[Kai Zhou]{Kai Zhou}
\address[Kai Zhou]
		{\newline School of Mathematical Sciences, Nanjing Normal University, Nanjing 210023, China}
\email{kzyd@njnu.edu.cn}

\begin{abstract}
We consider the hydrodynamic limits of the quantum Boltzmann equation with Fermi-Dirac statistics  for hard sphere and hard potentials in the whole space. By analyzing the spectrum of the linearized collision operator combined with the transport operator and its associated semigroup, the incompressible Navier–Stokes-Fourier limits from the BFD equation is verified rigorously. Compared to the results in \cite{JXZ2022JDE}, this paper works with a lower regularity for the initial data. In addition, the fixed-point arguments together with a time iteration ensure us to obtain the lifespan of kinetic solution coincides with those of limiting fluid solution.\\
\noindent{\bf Keywords:} Boltzmann--Fermi--Dirac equation, Hard potentials, Low regularity, Spectral analysis, Incompressible Navier-Stokes-Fourier limits.
\end{abstract}

\maketitle

\tableofcontents

\section{Introduction}\label{introduction-section}
\subsection{The Boltzmann-Fermi-Dirac Equation}
\sloppy
We study the modified Boltzmann equation for Fermi--Dirac particles which obey Pauli's exclusion principle. The motion of Fermi--Dirac particles satisfies the dimensionless Boltzmann--Fermi--Dirac equation ({\it briefly, BFD equation})
\begin{align}\label{BFD}
\text{St}\,\partial_t F(t,x, v)+v\cdot\nabla_xF(t,x,v)=\tfrac{1}{\text{Kn}}\,\mathcal{C}(F)(t, x,v),\quad(t,x, v)\in \mathbb{R}_+\times\BR\times\BR,
\end{align}
where``St'' (the Strouhal number) describes the mean collision frequency in the equilibrium state, and ``Kn'' (the Knudsen number) is the ratio of the mean free path to the macroscopic length scale. The collision operator $\mathcal{C}$ in \eqref{BFD} is given by
\begin{align}\label{op-C}
\mathcal{C}(F)=\iint_{\BR\times\BS}B( v- v_*,\sigma)\Big(F^{\prime}F_*^{\prime}(1- F)(1- F_*)-FF_*(1-F^{\prime})(1- F_*^{\prime})\Big)\ud\sigma\ud v_*,
\end{align}
where $F(t, x,v):\mathbb{R}_+\times\BR\times\BR\longmapsto\mathbb{R}_+$ describes the probability distribution of particles at a given time $t \geqslant 0$, position $x\in\BR$ with velocity $ v\in\BR$.

Arising in the collision operator $\mathcal{C}(F)$ due to the Pauli exclusion principle, the factor $1-F$ stands for the probability that a fermion (such as an electron) does not lie a range of the velocity range $\ud v$. In contrast, the factor $1+F$ corresponds to bosons and appears in the Boltzmann--Bose--Einstein equation ({\it briefly, BBE equation}). However, qualitative analysis of the BBE equation is more challenging than that of the BFD equation since $1 + F$ leads to a blow-up of solution to the BBE equation in finite time \cite{cai2019spatially} and the condensation phenomenon \cite{escobedo2015finite}.

Here and after, for any function $F(v)$ we always denote by
\begin{align*}
F\equiv F( v),~F_*\equiv F( v_*), ~\Fp\equiv F( v^{\prime}),~ \Fsp\equiv F( v^{\prime}_*),
\end{align*}
and $v$, $\vs$ and $\vp$, $\vsp$ represent the velocities of two particles before and
after their collision, respectively. During the collision, the conservation of momentum and energy for particle pairs can be expressed as
\begin{equation}\label{conservative-law}
    v^{\prime} + v^{\prime}_* = v + v_*,\quad |v^{\prime}|^2 + |v^{\prime}_*|^2 = |v|^2 + |v_*|^2.
\end{equation}
Then the velocities $\vp$ and $\vsp$ after collision can be solved and represented by
\begin{equation*}
    v^{\prime}=\frac{ v+ v_*}{2}+\frac{| v- v_*|\sigma}{2},\quad v^{\prime}_*=\frac{ v+ v_*}{2}-\frac{| v- v_*|\sigma}{2},\quad\sigma\in\BS.
\end{equation*}

The collision kernel $B(v-v_*,\sigma)$ in \eqref{op-C} is a non--negative Borel function depending only on the relative velocity $|v-v_*|$ and the scalar product $\langle v-v_*,\sigma\rangle$. We will consider the case where $B(v-v_*,\sigma)$ has an explicit form as below.

In the weak-coupling regime, $B(| v - \vs|, \sigma)$ can be mathematically expressed as (see \cite{benedetto2004nonlinear,benedetto2006lowdensity,benedetto2007review})
\begin{align}
    B\left(v-v_*, \sigma\right)=\left|v-v_*\right|\left(\widehat{\phi}\left(\left|v-v^{\prime}\right|\right)- \widehat{\phi}\left(\left|v-v_*^{\prime}\right|\right)\right)^2,\nonumber
\end{align}
where $\phi$ is the potential function and $\widehat{\phi}$ denotes the Fourier transform of $\phi$. Physically, we consider an inverse power law \cite{cercignani2006boltzmann} for the interaction potential, namely,
\begin{align*}
    \phi(z)=|z|^{-p},\quad p>1.
\end{align*}
In addition, the critical value $p=1$ corresponds to the Coulomb potential. For $z\in\BR$ and $1<p<3$, we have $\widehat{\phi}(\xi)=\widehat{\phi}(|\xi|)=C|\xi|^{p-3}$, for some $C>0$. For $\cos\theta=\langle\frac{v-v_*}{|v-v_*|},\sigma\rangle$, $\theta\in[0,\frac{\pi}{ 2}]$, the relationships from the $\sigma$-representation can directly lead to $|v-\vp|=|v-\vs|\sin\frac{\theta}{2}$ and $|v-\vsp|=|v-\vs|\cos\frac{\theta}{2}$. Then without loss of generality, the collision kernel can be written as
\begin{align}
    B(v-v_*,\sigma)=|v-\vs|^{\gamma}b(\cos\theta),\quad \gamma=2p-5\in(-3,1),~p\in(1,3),\nonumber
\end{align}
where due to the symmetry structure of the scattering function $b(\cos\theta)$ on $[0,\pi]$, $b(\cos\theta)$ can be written as
\begin{align*}
    b(\cos\theta)=\one_{\theta\in[0,\frac{\pi}{2}]}\Big(\sin^{p-3}\tfrac{\theta}{2}-\cos^{p-3}\tfrac{\theta}{2}\Big)^2.
\end{align*}
In summary, the collision kernel can be classified according to the range of $\gamma$ and $p$:
\begin{enumerate}
    \item[(I)] When $\gamma =1$, the collision kernel is called \textit{hard sphere} collision kernel and $b(\cos\theta) = 1$;
    \item[(II)] When $\gamma \in (0,1)$, i.e., $p \in \left( \frac{5}{2}, 3 \right)$, the collision kernel is called \textit{hard potential} kernel;
     \item[(III)] When $\gamma = 0$, i.e., $p = \frac{5}{2}$, the collision kernel is called \textit{Maxwell collision} kernel;
    \item[(IV)] When $\gamma \in (-1,0)$, i.e., $p \in \left(2, \frac{5}{2} \right)$, the collision kernel is called \textit{soft potential} kernel;
    \item[(V)] When $\gamma \in (-3,-1)$, i.e., $p \in \left(1,2\right)$, the collision kernel is called \textit{very soft potential} or \textit{non-cutoff soft potential} kernel. See \cite{JiangZhou2024Global,YLZhou2023ADV} for more details.
\end{enumerate}

In this work, we only consider the hard sphere and hard potentials, namely, cases (I) and (II) above. For $\gamma\in(0,1)$, integrating the corresponding scattering function $b(\cos\theta)$ on $\BS$, we get
\begin{align}
\int_{\mathbb{S}^2} b(\cos\theta) \, \ud\sigma
\leqslant 2\pi \int_0^{\pi/2} \left( \sin^{2p - 6} \tfrac{\theta}{2} + \cos^{2p - 6} \tfrac{\theta}{2} \right) \sin\theta \, \ud\theta
= \tfrac{2^{4 - p} \pi}{p - 2}.\nonumber
\end{align}
On the other hand, since $\sin \frac{\theta}{2} \leqslant \frac{\sqrt{3}}{3} \cos \frac{\theta}{2}$ for $\theta \in \left[0, \frac{\pi}{3} \right]$, we get for some constant $C_p > 0$,
\begin{align*}
\int_{\mathbb{S}^2} b(\cos\theta) \, \ud\sigma
\geqslant 2\pi \int_0^{\pi/3} \left( 3^{(3 - p)/2} - 1 \right) \cos^{2p - 6} \tfrac{\theta}{2} \sin\theta \, d\theta
= C_p.
\end{align*}
Thus we can define
\begin{align}
A_p = \int_{\mathbb{S}^2} b(\cos\theta) \, d\sigma,\nonumber
\end{align}
and $
0 < C_p < A_p < \frac{2^{4 - p} \pi}{p - 2}.
$
Therefore, this case for $\gamma\in(0,1)$ can be seen as the \textit{angular cutoff}.

\subsection{Hydrodynamic Limits of BFD Equation}
In the thesis of Zakrevskiy \cite{zakrevskiy2015kinetic}, several fluid equations such as compressible Euler equations, compressible and incompressible Navier-Stokes-Fourier equations were formally derived from the scaled BFD equation under appropriate time and spatial scales. In particular, the incompressible Navier-Stokes-Fourier limits was discussed in Chapter 3 of \cite{zakrevskiy2015kinetic}. The current paper is devoted to rigorously justifying this limit.

We shall work with the small parameters $\text{Kn}=\text{St}=\ve \ll 1$ and we start from the corresponding scaled equation:
\begin{equation}\label{BFD-scaling}
  \left\{
  \begin{split}
     \partial_t F^{\ve}(t,x, v)+\frac{1}{\ve}v\cdot\nabla_x \Fve(t,x,v)= &\frac{1}{\ve^2}\mathcal{C}(\Fve)(t, x,v),\quad\text{in}~(0,\infty)\times\BR\times\BR, \\
     \Fve(0,x,v)= & \Fin^{\ve}(x,v), \qquad\qquad\,\text{in}~\BR\times\BR.
  \end{split}
  \right.
\end{equation}
Generally, incompressible flows can be derived in the near-equilibrium framework. We then set the perturbation around the global Fermi-Dirac distribution $\mu$:
\begin{align*}
    \Fve(t,x,v)=\mu+\ve\sqrt{\muq}\fve(t,x,v),\quad \Fin^{\ve}(x,v)=\mu+\ve\sqrt{\muq}\fin^{\ve}(x,v),
\end{align*}
where
\begin{align}\label{mu}
    \mu(v):=\tfrac{1}{1+e^{\frac{|v|^2}{2}-1}}, \quad \mu_{\text{q}}=\mu(1-\mu).
\end{align}

Now, for the simplicity of presentation, we introduce
\begin{align}
    \mu_{\text{obtain}}:=\mu^{\prime}\mu_*^{\prime}(1-\mu)(1-\mu_*),\quad\mu_{\text{loss}}:=\mu\mu_*(1-\mu^{\prime})(1-\mu^{\prime}_*).\nonumber
\end{align}
Notice that $\mu_{\text{obtain}}=\mu_{\text{loss}}$ due to the conservative law \eqref{conservative-law}, we define the symmetric function by
\begin{align}
   \omega_{\mu}:=\sqrt{\mu_{\text{obtain}}}\sqrt{\mu_{\text{loss}}}=(\sqrt{\mu_{\text{q}}})(\sqrt{\mu_{\text{q}}})_*(\sqrt{\mu_{\text{q}}})^{\prime}(\sqrt{\mu_{\text{q}}})_*^{\prime}.\nonumber
\end{align}
Putting the perturbation into the scaled BFD equation \eqref{BFD-scaling}, we obtain the Cauchy problem
\begin{equation}\label{BFD-perturbation}
    \left\{\begin{split}
        \partial_t \fve+ \frac{1}{\ve} v\cdot\nabla_x\fve = & \,\frac{1}{\ve^2}\CL\fve + \frac{1}{\ve} \CQ(\fve,\fve) + \CT(\fve,\fve,\fve),\\
        \fve(0,x,v)=& \,\fin^{\ve}(x,v),
    \end{split}\right.
\end{equation}
where the linear operator $\CL$, bilinear and trilinear operators $\CQ$ and $\CT$ are respectively given by
    \begin{enumerate}
        \item[\textbullet] The linear operator $\CL g$:
        \begin{align}\label{linear-operator}
            \mathcal{L} g=\iint_{\BR\times\BS} \tfrac{B\left(v-v_*, \sigma\right) \omega_{\mu}}{\sqrt{\mu_{\text{q}}}}\left[\left(\tfrac{g}{\sqrt{\mu_{\text{q}}}}\right)_*^{\prime}+\left(\tfrac{g}{\sqrt{\mu_{\text{q}}}}\right)^{\prime} -\left(\tfrac{g}{\sqrt{\mu_{\text{q}}}}\right)_*-\tfrac{g}{\sqrt{\mu_{\text{q}}}}\right] \mathrm{d} \sigma \mathrm{d} v_*.
        \end{align}
        \item[\textbullet] The bilinear operator $\CQ(f,g)$:
        \begin{align*}
            \mathcal{Q}(f, g)=\mathcal{Q}_1(f, g)+\mathcal{Q}_2(f, g)+\cdots+\mathcal{Q}_6(f, g),
        \end{align*}
        with $\displaystyle\mathcal{Q}_i(f, g)=\iint_{\BR\times\BS} \tfrac{B\left(v-v_*, \sigma\right)}{\sqrt{\mu_{\text{q}}}} q_i(f, g) \mathrm{d} \sigma \mathrm{d} v_*, i=1,2, \cdots, 6$, and
     \begin{align}\label{qi-definition}
\begin{aligned}
&q_1(f, g) = \left(\sqrt{\mu_{\text{q}}} f\right)_*^{\prime} \left(\sqrt{\mu_{\text{q}}} g\right)^{\prime} (1 - \mu_* - \mu),\\
&q_2(f, g) = - \left(\sqrt{\mu_{\text{q}}} f\right)_* \left(\sqrt{\mu_{\text{q}}} g\right) (1 - \mu_*^{\prime} - \mu^{\prime}), \\
&q_3(f, g) = \left(\sqrt{\mu_{\text{q}}} f\right)_*^{\prime} \left(\sqrt{\mu_{\text{q}}} g\right) (\mu_* - \mu^{\prime}),\\
&q_4(f, g) = - \left(\sqrt{\mu_{\text{q}}} f\right)_* \left(\sqrt{\mu_{\text{q}}} g\right)^{\prime} (\mu_*^{\prime} - \mu), \\
&q_5(f, g) = \left(\sqrt{\mu_{\text{q}}} f\right)_*^{\prime} \left(\sqrt{\mu_{\text{q}}} g\right)_* (\mu - \mu^{\prime}),\\
&q_6(f, g) = \left(\sqrt{\mu_{\text{q}}} f\right)^{\prime} \left(\sqrt{\mu_{\text{q}}} g\right) (\mu_* - \mu_*^{\prime}).
\end{aligned}
\end{align}
\item[\textbullet] The trilinear operator $\CT(f,g,h)$:
\begin{align*}
    \mathcal{T}(f, g, h)=\mathcal{T}_1(f, g, h)+\mathcal{T}_2(f, g, h)+\cdots+\mathcal{T}_4(f, g, h),
\end{align*}
 with $\displaystyle\mathcal{T}_i(f, g, h)=\iint_{\BR\times\BS} \tfrac{B\left(v-v_*, \sigma\right)}{\sqrt{\mu_{\text{q}}}} t_i(f, g, h) \mathrm{d} \sigma \mathrm{d} v_*$, $i=1, \cdots, 4$, and
 \begin{align*}
     \begin{aligned}
         & t_1(f, g, h)=\left(\sqrt{\mu_{\text{q}}} f\right)_*\left(\sqrt{\mu_{\text{q}}} g\right)\left(\sqrt{\mu_{\text{q}}} h\right)_*^{\prime},\\
         & t_2(f, g, h)=\left(\sqrt{\mu_{\text{q}}} f\right)_*\left(\sqrt{\mu_{\text{q}}} g\right)\left(\sqrt{\mu_{\text{q}}} h\right)^{\prime}, \\
         & t_3(f, g, h)=-\left(\sqrt{\mu_{\text{q}}} f\right)_*^{\prime}\left(\sqrt{\mu_{\text{q}}} g\right)^{\prime}\left(\sqrt{\mu_{\text{q}}} h\right)_*,\\
         & t_4(f, g, h)=-\left(\sqrt{\mu_{\text{q}}} f\right)_*^{\prime}\left(\sqrt{\mu_{\text{q}}} g\right)^{\prime}\left(\sqrt{\mu_{\text{q}}} h\right).
     \end{aligned}
 \end{align*}
\end{enumerate}
Furthermore, we introduce the symmetrized forms of the bilinear and trilinear operators as
\begin{align}
  \CQ_{\text{sym}}(f,g):=&\tfrac{1}{2}(\CQ(f,g)+\CQ(g,f)),\label{bilinear-sym}\\
  \CT_{\text{sym}}(f,g,h):=&\tfrac{1}{6}\Big(\CT(f,g,h)+\CT(f,h,g)\Big)+\tfrac{1}{6}\Big(\CT(h,f,g)+\CT(h,g,f)\Big),\label{trilinear-sym}\\
  &+\tfrac{1}{6}\Big(\CT(g,h,f)+\CT(g,f,h)\Big), \nonumber
\end{align}
then we have $\CQ_{\text{sym}}(f,f) = \CQ(f,f)$ and $\CT_{\text{sym}}(f,f,f)=\CT(f,f,f)$.

As is shown in \cite{zakrevskiy2015kinetic}, we take inner products by \eqref{BFD-perturbation} with $[1,v,|v|^2]\sqrt{\mu_{\text{q}}}$ and let $\ve\to 0$ to formally obtain the incompressible Navier–Stokes-Fourier equations ({\it NSF equations for short})
\begin{align}\label{NSF}
    \begin{aligned}
    &\left\{\begin{array}{l}
    E_2 \partial_t u+E_2 u \cdot \nabla_x u+\nabla_x p=\nu_* \Delta_x u, \\
    \nabla_x \cdot u=0, \quad\nabla(\rho+\vartheta)=0,\\
    C_A \partial_t \vartheta+C_A u \cdot \nabla_x \vartheta=\kappa_* \Delta_x \vartheta,
    \end{array}\right.
\end{aligned}
\end{align}
where
\begin{align*}
 \nu_*=\int_{\BR}\beta_{\mathcal{L}}(|v|) v_1^2 v_2^2\muq\ud v, \quad \kappa_*=\int_{\BR}\alpha_{\mathcal{L}}(|v|)\left(\frac{|v|^2}{2}-K_A\right)^2 v_1^2\muq\ud v,
\end{align*}
and the definitions of constants $E_2$, $C_A$, $K_A$ and the positive functions $\alpha_{\mathcal{L}}(|v|)$ and $\beta_{\mathcal{L}}(|v|)$ can be found in Section \ref{sec-ntt-mr}.

\subsection{History and Literature}
As a modification of classical Boltzmann equation, the quantum Boltzmann equation was first obtained by heuristic arguments of Nordheim \cite{Nordheim1928} and Uehling-Uhlenbeck \cite{UU1933}. However, unlike the classical ones, the rigorous derivation of the quantum Boltzmann equation from particle system is not yet well-established. Partial results considering the derivation of the quantum Boltzmann equation are presented in \cite{Spohn-2010,ESY-2004JSP,benedetto2004nonlinear,benedetto2005weak,benedetto2006lowdensity,benedetto2007review}.

We review some results concerning the fluid limits from quantum Boltzmann equation. One historical motivation for rigorously deriving fluid models from kinetic equations via asymptotic expansions comes from Hilbert’s sixth problem \cite{Hilbert1900Problems}. The earlier formal derivations of the fluid limits from kinetic equation can be found in \cite{Chapman1960,Grad1963}. Although there are numerous research results on rigorous verification of hydrodynamic limits from the classical Boltzmann equation, such as \cite{Bardos1991JSP,Bardos1991M3AS,Caflisch1980CPAM,Golse2002CPAM,Golse2004Invent,Golse2009JMPA,Guo2006CPAM,Jiang2010CPDE,JiangXuZhao2018IUMJ} and literature cited therein, the study on hydrodynamic limits from the quantum Boltzmann equation has only emerged in the past decade. For the fluid limits of BFD equation, Zakrevskiy formally derived in his thesis \cite{zakrevskiy2015kinetic} the compressible Euler and Navier-Stokes limits and incompressible Navier-Stokes limits. We also mention that, Filbet-Hu-Jin \cite{FilbetHuJin2012MMNA} introduced a new scheme for quantum Boltzmann equation to capture the Euler limit by numerical computations. Motivated by Zakrevskiy's formal analysis, Jiang-Xiong-Zhou \cite{JXZ2022JDE} was firstly devoted to rigorously justifying the incompressible NSF limits from BFD equation for the hard sphere case in the near-equilibrium framework. In their work, they handled the trilinear structure of the collision term arising from the quantum effects and obtained the global existence of BFD equation (for hard potential case, see \cite{JiangZhou2024Global}). In the same framework, Jiang-Zhou \cite{JiangZhou2024Acoustic} verified the acoustic limit of BFD equation. Later on, by analysing the nonlinear implicit relation between the BFD equation and the compressible Euler system, Jiang-Zhou \cite{JiangZhou2024QBE} employed the Hilbert expansion, together with carefully establishing Grad–Caflisch type decay estimates for the inverse of linearized operator, to establish the convergence from BFD equation to the Euler system. In a same spirit of \cite{JiangZhou2024QBE}, Jiang-Yang-Zhou \cite{JYZ2025JDE} proved the compressible Euler-Poisson limits from Vlasov-Poisson-BFD equation. For BBE equation, He-Jiang-Zhou \cite{HeJiangZhou2023BBE} proved its incompressible NSF limits under high temperature assumption.

The above results on fluid limits of kinetic equation are mainly based on the nonlinear energy estimates uniformly in Knudsen number $\ve$ and are in the classical/weak solution framework. Another way to obtain the fluid limits relies on the spectral analysis of the perturbed linearized collision operator (the scaled linear operator $\ve^{-2}\CL + \ve^{-1}v\cdot\nabla_x$) and sharp estimates for its associated semigroup. For classical Boltzmann equation, this approach was developed by Grad \cite{Grad1965PSAM} and Bardos--Ukai \cite{BU1991MMMAS}. In Bardos--Ukai’s work, the key feature is that they only needed the smallness of the initial data, and did not assume the smallness of the Knudsen number $\ve$. Meanwhile, Ellis--Pinsky \cite{ellis1975fluid} carefully studied the perturbed linearized collision operator mentioned above and performed its spectral decomposition. More results on related spectral theory can be found in \cite{Nicolaenko1971,Cercignani1994}.

In recent years, certain research on fluid limits have resorted to spectral decomposition for classical Boltzmann equation. Gallagher-Tristani \cite{Gallagher2020AHL} employed this approach to find the connection between strong solutions of the Boltzmann and the Navier--Stokes equations. They shown that the life span of the solutions of the rescaled Boltzmann equation is bounded from below by that of the Navier–Stokes system. Gervais--Lods \cite{gervais2024hydrodynamic} presented a more modern spectral approach to achieve strong convergence from various kinetic equations for not too soft potentials to the NSF system under structural assumptions. Especially, based on the fact that the solutions of Navier--Stokes equation exist for initial data in $H^{1/2}(\mathbb{T}_x^3)$, Carrapatoso--Gallagher-Tristani \cite{carrapatoso2025navier} firstly established a rigorous correspondence between kinetic and Navier-–Stokes solutions with low spatial regularity initial data and without any smallness assumption on the fluid initial data.

To our best knowledge, there are no results concerning the corresponding spectral analysis for quantum case. In the present paper, we investigate the properties of spectrum of the perturbed linearized collision operator for BFD equation with hard sphere and hard potentials collision kernel. Then in a same spirit of \cite{carrapatoso2025navier}, we set out to understand the hydrodynamic limits of the BFD equation in the whole space $\mathbb{R}_x^3$. Compared to the previous results \cite{JXZ2022JDE} by Jiang–Xiong–Zhou, this paper works without the smallness assumption on the macroscopic part of the kinetic initial data and provides a possible blow-up of the microscopic part of the kinetic initial data, which is characterized by $\ve^{-\beta}$ for some $\beta<1/2$. In addition, we need more refined estimates for trilinear terms (see Lemma \ref{lem-tri-estimates}) than those in \cite{JXZ2022JDE}. By using a fixed-point argument together with a time iteration, it turns out that in this work the smallness assumption on macroscopic part of the initial data is not needed.

\subsection{Notations and Main Results}\label{sec-ntt-mr}
Firstly, we make some conventions on symbols for convenience.
\begin{itemize}
\item Unless specified, capital $C$ is a generic constant and independent of any parameters. In addition, we use $C_{\alpha_1,\alpha_2,\cdots}\equiv C(\alpha_1,\alpha_2,\cdots)>0$ when the constant $C$ depends on the parameters $\alpha_1,\alpha_2,\cdots$.
  \item The relation `$a\sim b$' means $C_1 b \leqslant a \leqslant C_2 b$.  $a\lesssim(\gtrsim)\, b$ means $a \leqslant (\geqslant)\, C b$ for some constant $C>0$.
\item For $l\in\mathbb{R}$, we denote the weight function by $\langle v \rangle^{l}:=(1+|v|^2)^{\frac{l}{2}}$. Particularly for $l=\gamma$, it is convenient to define the weighted Lebesgue space in $v$:
\begin{align*}
    L_\gamma^2 = \big\{f(v)\in L^2(\mathbb{R}^3)\,|\,\|f\|_{L_\gamma^2}^2\equiv\int_{\mathbb{R}^3} |f|^2\langle v \rangle^{\gamma}\ud v< +\infty\big\},
\end{align*}
which is endowed with the norm $\|\cdot\|_{L_\gamma^2}$.
\item In this paper, the Fourier transformation always acts on $x$ variable, that is, $\widehat{f}(t, \xi, v)$ is the Fourier transformation of $f(t,x,v)$. Without causing confusion, we slightly abuse the notation $\widehat{f}(\xi)$ for the Fourier transformation of $f(\cdot,x,\cdot)$. When more convenient, we will sometimes use the notation $\mathcal{F}_x f$ for $\widehat{f}$. 
\item $\langle D_x \rangle = (\mathrm{Id} - \Delta_x)^{1/2}$ is the Fourier multiplier with symbol $(1 + |\xi|^2)^{1/2}$. The standard Hilbert space $H_x^s$ can be endowed with the equivalent norm
\begin{equation*}
  \|f\|_{H_x^s} = \|\langle D_x\rangle^s f\|_{L_x^2}.
\end{equation*}
We also slightly abuse notation and use
\begin{align*}
    |D_x| = (- \Delta_x)^{1/2},
\end{align*}
to denote the Fourier multiplier with symbol $|\xi|$. Similarly, the homogeneous Hilbert space $\dot{H}_x^s$ is endowed with the equivalent norm
\begin{equation*}
  \|f\|_{\dot{H}_x^s} = \left\{\begin{split}
  &\||D_x|^s f\|_{L_x^2},\quad s\in(0,1],\\
  &\sum_{k=1}^{[s]}\||D_x|^k f\|_{L_x^2} + \||D_x|^s f\|_{L_x^2},\quad s>1.
  \end{split}\right.
\end{equation*}
\item For any $m \geqslant 0$ and $T>0$, we define the space $\widetilde{L}^{\infty}\left([0, T], H_x^m L^2_v\right)$ through its norm
\begin{align*}
\|f\|_{\widetilde{L}^{\infty}\left([0, T], H_x^m L^2_v\right)}^2:=\int_{\BR}\langle\xi\rangle^{2m}\|\widehat{f}(\cdot, \xi, \cdot)\|_{L^{\infty}\left([0, T], L^2_v\right)}^2\ud\xi.
\end{align*}
To simplify the notation, we shall often write $L_I^p H_x^m L^2_v$ for $L^p\left(I, H_x^m L^2_v\right)$ and similarly $\widetilde{L}_I^{\infty} H_x^m L^2_v$ for $\widetilde{L}^{\infty}\left(I, H_x^m L^2_v\right)$ when $I$ is an interval of $\mathbb{R}^{+}$. If $I=[0, T]$ we will simply write $L_T^p H_x^m L^2_v$ and $\widetilde{L}_T^{\infty} H_x^m L^2_v$. Finally, if $T=\infty$ and in the absence of ambiguity, we write $L_t^p H_x^m L^2_v$ for $L^p\left(\mathbb{R}^{+}, H_x^m L^2_v\right)$.
\item For convenience, we shall introduce some constants (see \cite[Section 4]{JXZ2022JDE})
\begin{align}\label{const-coefficents}
    \begin{aligned}
     &E_0 =\int_{\BR}\muq\ud v, \quad
E_2 =\int_{\BR}|v_1|^2\muq\ud v ,\\
&E_4 =\int_{\BR}|v_1|^4\muq\ud v, \quad
E_{22} =\int_{\BR}|v_1 v_2|^2\muq\ud v,\\
&C_A =\int_{\BR}\left( \frac{|v|^2}{2} - K_A \right)^2 v_1^2(1 - 2\mu) \muq\ud v, \quad
K_g + 1 = K_A = \frac{E_4 + 2 E_{22}}{2 E_2}.
    \end{aligned}
\end{align}
For the following two functions
\begin{align*}
A(v)=\left(K_A-\frac{|v|^2}{2}\right) v, \quad B(v)=\frac{|v|^2}{3} \text{Id}-v \otimes v,
\end{align*}
there exist unique functions $A^{\prime}(v)$ and $B^{\prime}(v)$ such that
\begin{align*}
    \mathcal{L}\left(A_i^{\prime}\right)=A_i, \quad \mathcal{L}\left(B_{i j}^{\prime}\right)=B_{i j}, \quad i, j=1,2,3 .\nonumber
\end{align*}
In fact, $A^{\prime}(v), B^{\prime}(v)$ can be solved and represented as
\begin{align}\label{func-efficients}
    A^{\prime}(v)=-\alpha_{\mathcal{L}}(|v|) A(v), \quad B^{\prime}(v)=-\beta_{\mathcal{L}}(|v|) B(v),
\end{align}
for two positive functions $\alpha_{\mathcal{L}}(|v|)$ and $\beta_{\mathcal{L}}(|v|)$, one can see the Appendix in \cite[Chapter 2]{zakrevskiy2015kinetic} for more details.
\end{itemize}

Now we are ready to present our main theorem, that is, the limits from BFD equation \eqref{BFD-perturbation} to the incompressible NSF system \eqref{NSF} in the low regularity settings.
\begin{theorem}\label{thm-main}
    Given the well-prepared initial data $(\rho_{\mathrm{in}},u_{\mathrm{in}},\vartheta_{\mathrm{in}})\in H_x^{\frac{1}{2}}(\BR)$ satisfying the incompressibility and Boussinesq relation:
    \begin{equation*}
      \nabla_x\cdot u_{\mathrm{in}}=0,\qquad \rho_{\mathrm{in}}+\vartheta_{\mathrm{in}}=0.
    \end{equation*}
    Let $T$ be given in Lemma \ref{lem-fluid-solution} and $(\rho,u,\vartheta)(t, x)\in \tilde{L}^{\infty}_T H^{\frac{1}{2}}_x\cap L^2_T H^{\frac{3}{2}}_x$ be the unique solution on time interval $[0,T]$ to the NSF equation \eqref{NSF} with the above initial data. Let $\alpha < \frac{1}{4}$ and $\beta < \frac{1}{2}$ be two real numbers. Consider the well-prepared initial data
\begin{align}\label{def-initial-data}
    g_{\mathrm{in}}:= \Big\{u_{\mathrm{in}}\cdot v+(\tfrac{K_g}{1+K_g}\vartheta_{\mathrm{in}}-\tfrac{1}{1+K_g}\rho_{\mathrm{in}})(\tfrac{|v|^2}{2}-K_g-1)\Big\}\sqrt{\muq},
\end{align}
where the constant $K_g$ is given in \eqref{const-coefficents}, and we denote by
\begin{align}\label{def-solution-g}
    g:=\{\rho+u\cdot v+\vartheta(\tfrac{|v|^2}{2}-K_g)\}\sqrt{\muq}.
\end{align}
Then for a family of functions $f^{\varepsilon}_{\mathrm{in}} = \mathbf{P}_0f^{\varepsilon}_{\mathrm{in}} + \mathbf{P}_0^{\perp}f^{\varepsilon}_{\mathrm{in}}$ satisfying
    \begin{itemize}
        \item $\PP_0\fve_{\mathrm{in}}=\psi(\ve^{\al}|D_x|)g_{\mathrm{in}}$, for some smooth compactly supported function $\psi$;
        \item For any $\PP_0^{\perp}\fve_{\mathrm{in}}\in (\Ker\,\CL)^{\perp}$,
        \begin{align}
          \left\|\mathbf{P}_0^{\perp} f_{\mathrm{in}}^{\varepsilon}\right\|_{H_x^{\frac{1}{2}} L_v^2}+\varepsilon^\beta\left\|\mathbf{P}_0^{\perp} f_{\mathrm{in}}^{\varepsilon}\right\|_{H_x^{\ell} L_v^2} \longrightarrow 0,\text{ as } \ve\to 0,\nonumber
        \end{align}
    \end{itemize}
    there is a $\varepsilon_0>0$ such that for any $\varepsilon \leqslant \varepsilon_0$, the Cauchy problem \eqref{BFD-perturbation} admits a unique solution
    \begin{equation*}
        f^{\varepsilon} \in \widetilde{L}_T^{\infty} H_x^{\ell} L_v^2,\quad \tfrac{3}{2}<\ell\leqslant 2,
    \end{equation*}
    with $\mathbf{P}_0 f^\ve \in L_T^2 \dot{H}_x^{\ell} L^2_{\gamma}$ and $\mathbf{P}_0^{\perp} f^\ve \in L_T^2 H_x^{\ell} L^2_{\gamma}$. Moreover, the solution $f^{\varepsilon}$ satisfies
 \begin{align*}
     \left\|f^{\varepsilon}-g\right\|_{\widetilde{L}_T^{\infty} H_x^{\frac{1}{2}} L_v^2} + \left\|\mathbf{P}_0 (f^{\varepsilon}-g)\right\|_{L_T^2 \dot{H}_x^{\frac{3}{2}} L^2_v} + \left\|\mathbf{P}_0^{\perp} (f^{\varepsilon}-g)\right\|_{L_T^2 H_x^{\frac{3}{2}} L^2_{\gamma}} \longrightarrow 0,\, \text{ as }\varepsilon \to 0.
 \end{align*}
\end{theorem}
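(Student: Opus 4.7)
My plan is to combine three ingredients in the spirit of \cite{carrapatoso2025navier}: (i) a sharp spectral decomposition of the rescaled generator $\Lambda_\ve := \ve^{-2}\CL - \ve^{-1} v\cdot \nabla_x$ and its semigroup $U^\ve(t):=e^{t\Lambda_\ve}$, performed Fourier-in-$x$; (ii) a fixed-point argument for the Duhamel form of \eqref{BFD-perturbation} carried out on a short time interval and at two regularity levels ($H^{1/2}_x$ for convergence, $H^\ell_x$ with $3/2<\ell\leqslant 2$ for well-posedness, which accommodates the $\ve^{-\beta}$ growth of the microscopic initial datum); and (iii) a time iteration that bootstraps short-time existence up to the full lifespan $[0,T]$ of the NSF solution, thereby dispensing with any smallness of $\PP_0 \fve_{\mathrm{in}}$.

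\textbf{Linear / spectral step.} Analyzing the symbol $\widehat{\Lambda}_\ve(\xi)$, I would establish a splitting $U^\ve(t) = U^\ve_{\mathrm{hyd}}(t) + U^\ve_{\mathrm{dis}}(t)$, where $U^\ve_{\mathrm{hyd}}$ is the projection onto the five branches of hydrodynamic eigenvalues bifurcating from the five-dimensional $\KerL$ for the BFD linearized operator, and $U^\ve_{\mathrm{dis}}$ inherits the spectral gap of $\CL$. The two crucial outputs are: (a) strong convergence of $U^\ve_{\mathrm{hyd}}(t)\,g_{\mathrm{in}}$ to the semigroup of the linearized NSF system applied to $g_{\mathrm{in}}$, in $\widetilde{L}^{\infty}_T H_x^{1/2} L^2_v$; and (b) for $U^\ve_{\mathrm{dis}}$, exponential decay at rate $\sim\ve^{-2}$ together with a gain of $\ve$ in $L^2_t L^2_x$-smoothing on $(\KerL)^\perp$. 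Property (b) absorbs the admissible microscopic blow-up $\ve^{-\beta}$ with $\beta<1/2$ and provides the crucial $\ve$-gain needed to handle the singular $1/\ve$ nonlinearity. The frequency truncation $\psi(\ve^\alpha |D_x|)$ in $\PP_0\fve_{\mathrm{in}}$ confines the analysis to $|\xi|\lesssim \ve^{-\alpha}$, where the $O(\ve|\xi|^2)$ corrections to the hydrodynamic eigenvalues remain uniformly small; the constraint $\alpha<1/4$ is exactly what is needed to close the nonlinear $L^2_t H^{3/2}_x$-estimates.

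\textbf{Nonlinear fixed point.} I would write the Duhamel identity
\begin{align*}
\fve(t) = U^\ve(t)\fve_{\mathrm{in}} + \int_0^t U^\ve(t-s)\bigl[\tfrac{1}{\ve}\CQ_{\text{sym}}(\fve,\fve) + \CT_{\text{sym}}(\fve,\fve,\fve)\bigr](s)\,\ud s,
\end{align*}
and close a contraction in $X_{T_*} := \widetilde{L}^\infty_{T_*} H_x^{\ell} L^2_v$ with the dissipation controls $\PP_0\fve\in L^2_{T_*}\dot{H}_x^{\ell}L^2_v$ and $\PP_0^\perp\fve\in L^2_{T_*}H_x^{\ell}L^2_\gamma$. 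The singular factor $1/\ve$ in front of $\CQ$ is rendered harmless by the hydrodynamic cancellation: after $\fve = \PP_0\fve + \PP_0^\perp\fve$, the genuinely singular piece is $\ve^{-1}\PP_0^\perp \CQ(\PP_0\fve, \PP_0\fve)$, which lies in $(\KerL)^\perp$ and, composed with $U^\ve_{\mathrm{dis}}$, reconstructs the nonlinear NSF advection terms of \eqref{NSF} with an $O(1)$ bound, while the cross and purely microscopic pieces carry an extra $\ve$-gain. The trilinear contribution, specific to the quantum setting and absent in the classical Boltzmann case, is controlled via the refined product estimates of Lemma \ref{lem-tri-estimates}, interpolating between $\widetilde{L}^\infty_T H_x^{1/2}$ and $L^2_T H_x^{3/2}$.

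\textbf{Time iteration, convergence, and main obstacle.} Since $g$ is controlled on the whole interval $[0,T]$ by Lemma \ref{lem-fluid-solution} in the same norms, the size of $\fve(T_*)$ after the first step is essentially that of $g(T_*)$ up to an $o_\ve(1)$ error; restarting the fixed-point at $T_*, 2T_*, \ldots$, the dissipation on $\PP_0^\perp\fve$ prevents any accumulation of microscopic energy, and finitely many restarts suffice to reach $[0,T]$ without imposing smallness on $\PP_0\fve_{\mathrm{in}}$. The convergence $\fve\to g$ in the stated topology is then read off from the linear step and the vanishing of the nonlinear remainders as $\ve\to 0$. The main obstacle, and technical heart of the argument, is to make the hydrodynamic cancellation work at the critical regularity $H^{1/2}$ while simultaneously tolerating a microscopic initial datum diverging as $\ve^{-\beta}$ and the quantum trilinear term; this requires a delicate balance between the spectral $\ve$-gain of $U^\ve_{\mathrm{dis}}$, the frequency truncation permitted by $\alpha<1/4$, and the refined trilinear estimates of Lemma \ref{lem-tri-estimates}.
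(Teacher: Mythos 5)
Your high-level ingredients (spectral splitting of $U^\ve$, fixed point, time iteration) match the paper's toolkit, but you are missing the one structural move that makes the whole argument close without smallness: the paper does \emph{not} run the fixed point on $\fve$ directly. It first subtracts the frequency-regularized fluid solution $g^\ve$ and works with the difference $\delta^\ve := \fve - g^\ve$, which obeys \eqref{different-equation}, namely $\delta^\ve = D_0^\ve + \CS^\ve + \CA^\ve[\delta^\ve] + \CB^\ve[\delta^\ve,\delta^\ve] + \mathfrak{T}^\ve[\delta^\ve,\delta^\ve,\delta^\ve]$. In this formulation the data term $D_0^\ve + \CS^\ve$ tends to $0$ as $\ve\to0$ (Proposition \ref{prop-fixed-point-lem-conditions}(2)--(3)), so the smallness required by the fixed-point Lemma \ref{lem-fixed-point} is delivered by the small Knudsen number, not by the macroscopic initial datum. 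The subtraction also produces a genuinely \emph{linear} term $\CA^\ve[\delta^\ve]$ whose coefficients are $g^\ve$; the time iteration in the paper is designed precisely to make $\|\CA^\ve\|<\tfrac12$ on each subinterval, by choosing $[t_i,t_{i+1}]$ so that $\|g\|_{\widetilde L^4_{[t_i,t_{i+1}]}H^1_x}+\|g\|_{L^2_{[t_i,t_{i+1}]}H^{3/2}_x}$ is small.

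Your version, a direct contraction on the full Duhamel identity $\fve = U^\ve(t)\fve_{\mathrm{in}} + \mathfrak{Q}^\ve[\fve,\fve] + \mathfrak{T}^\ve[\fve,\fve,\fve]$ on small time intervals $[0,T_*]$, $[T_*,2T_*]$, \dots, cannot close for general data of size $\|g_{\mathrm{in}}\|_{H^{1/2}_x L^2_v}$. The free part $U^\ve(t)\fve_{\mathrm{in}}$ has $\widetilde L^\infty_{T_*}H^{1/2}_x L^2_v$ norm comparable to $\|g_{\mathrm{in}}\|$ uniformly in $T_*$, the bilinear operator $\mathfrak{Q}^\ve$ is scaling-critical in $H^{1/2}_x$ and therefore gains no positive power of $T_*$, and there is no linear operator to carry the smallness. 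This is exactly the Fujita--Kato obstruction, and restarting at later times does not help because $\fve(kT_*)\approx g(kT_*)$ is again not small. Shrinking the interval controls the \emph{linear} term in the paper's difference equation, not the bilinear term of the raw Duhamel identity; without the subtraction there is nothing for the time iteration to bite on, and you would be forced back to assuming $\|g_{\mathrm{in}}\|$ small, which is precisely what the theorem claims to avoid. A secondary inaccuracy: there are four (not five) hydrodynamic branches $\{\mathrm{NS},\ \mathrm{heat},\ \mathrm{wave}\pm\}$ in Lemma \ref{lem-spectral-thm}, the NS branch being two-dimensional so that the total count is five.
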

\begin{remark}
  Theorem \ref{thm-main} shows that based on fluid solutions $(\rho,u,\theta)$ with low regularity, we can construct a kinetic solution $f^\ve$ with higher regularity. Furthermore, as the Knudsen number $\ve$ tends to zero, we prove that $f^\ve$ converges strongly in $\widetilde{L}_T^{\infty} H_x^{\frac{1}{2}} L_v^2$ to the infinitesimal Maxwellian $g$ generated by $(\rho,u,\vartheta)$.
\end{remark}

\begin{remark}
    This work focuses mainly on the hydrodynamic limits from the BFD equation under low regularity initial data setting. To lay stress on low regularity setting, we treat on purpose the case that the initial data is well prepared for simplicity. Indeed, there is mature method to handle the case that the initial data is not well prepared.
\end{remark}

\subsection{Organization of this Paper}
In the next Section, we recall the properties of the linearized collision operator $\CL$. We then analyze the spectral structure of its small perturbations in Fourier space and study the associated semigroup. In Section 3, we outline the strategy for the proof of the main theorem. Specifically, we perform spectral analysis and semigroup decomposition and study the difference equation obtained by taking the difference between the Duhamel formulations of two systems satisfied by the kinetic solution $f^\ve$ and the fluid solution $g^\ve$ respectively. A basic fixed-point lemma is then applied to this difference equation to obtain existence. The main difficulty lies in verifying the contraction condition of the linear operator. In fact, the smallness of $\ve$ cannot be freely used. To address this, we construct a time-local iterative scheme, which ensures that the conditions in the fixed-point lemma are satisfied. In the last section, we focus on proving Proposition \ref{prop-fixed-point-lem-conditions}. We show that the linear operator, the quadratic and the cubic nonlinear operators satisfy the assumptions of the fixed-point lemma.

%==========================================================================================
\section{Preliminary work}
In this section, we mainly focus on the perturbative linear operator $\CL - iv \cdot \nabla_x$ and denote its Fourier transform in the space variable by
\begin{equation*}
  \widehat{\Lambda}(\xi):=\CL - iv \cdot \xi.
\end{equation*}
Then we will investigate the spectrum of $\widehat{\Lambda}(\xi)$ and its associated semi-group. These preparations lay the foundation for proving the strong convergence. Firstly, we recall some known properties of the linear operator $\CL$ for hard sphere and hard potentials.

\subsection{The Properties of the Linear Operator \texorpdfstring{$\CL$}{L}}
Decomposing the linear operator $\CL$ given by \eqref{linear-operator} into
\begin{equation*}
\mathcal{L}g = - \nu(v)g + \mathcal{K}g,
\end{equation*}
where the collision frequency $\nu(v)$ is
\begin{equation}
\nu(v) = \iint_{\mathbb{R}^3 \times \mathbb{S}^2} \tfrac{B(v - v_*, \sigma)\, \omega_{\mu}}{\mu_{\text{q}}} \, \ud\sigma\, \ud v_*,
\end{equation}
and $\mathcal{K}$ is given by
\begin{align*}
  \mathcal{K} g = \iint_{\mathbb{R}^3 \times \mathbb{S}^2} \frac{B(v - v_*, \sigma)\, \omega_{\mu}}{\sqrt{\mu_{\text{q}}}}\,\left[\left(\frac{g}{\sqrt{\mu_q}}\right)_*^\prime + \left(\frac{g}{\sqrt{\mu_q}}\right)^\prime - \left(\frac{g}{\sqrt{\mu_q}}\right)_*\right] \, \ud\sigma\, \ud v_*.
\end{align*}

\begin{lemma}[\cite{JXZ2022JDE,JiangZhou2024Global}]\label{lem-linear-operator-propreties}
    For $\gamma\in (0,1]$, there hold that
\begin{itemize}
    \item There are two constants $C_1, C_2 > 0$ such that
    \begin{equation*}
        C_1(1 + |v|)^{\gamma} \leqslant \nu(v) \leqslant C_2(1 + |v|)^{\gamma};
    \end{equation*}

    \item The operator
    \[
        \mathcal{K} : L^2_v \longrightarrow L^2_v
    \]
    is compact;

    \item The linearized collision operator $\mathcal{L}$ is symmetric, non-positive, and its null space is
    \begin{equation*}
        \operatorname{Ker} \CL= \Span \left\{ 1,\, v,\, |v|^2 \right\}\sqrt{\muq};
    \end{equation*}
    Moreover, let $\mathbf{P}_0:L^2_v\to \Ker\CL$ be the projection onto $\Ker\CL$. $\mathbf{P}_0$ can be expressed as
    \begin{align*}
        \mathbf{P}_0g=a_g+b_g\cdot v+c_g|v|^2,
    \end{align*}
    where $a_g,c_g\in\mathbb{R}$, $b_g\in\BR$ are projection coefficient under the projection $\mathbf{P}_0$;
    \item Let $\mathbf{P}_0^{\perp}:=\Id-\mathbf{P}_0$ be the orthogonal projection onto the orthogonal complement $(\operatorname{Ker} \CL)^{\perp}$ in $L^2_v$. There exists a constant $\lambda > 0$ such that
    \begin{equation*}
        -\langle \mathcal{L}g,\, g \rangle \geqslant \lambda\|\PP_0^{\perp} g\|^2_{L^2_{\gamma}},
        \quad \text{for any}~ g \in D(\mathcal{L}) = \left\{ g \in L^2_v \,\middle|\, \nu g \in L^2_v\right\}.
    \end{equation*}
\end{itemize}
\end{lemma}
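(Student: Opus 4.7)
The plan is to establish the four bullets in the stated order, leveraging the explicit structure of the collision kernel together with the decay properties of the Fermi--Dirac equilibrium. For the pointwise bounds on $\nu(v)$, I would first integrate out $\sigma$ (which produces the finite angular factor $A_p$ identified in the introduction), reducing matters to estimating
\begin{align*}
  \nu(v) = A_p \int_{\BR} |v-v_*|^{\gamma}\,\tfrac{\omega_{\mu}}{\muq}\,\ud v_*.
\end{align*}
Using the conservation law \eqref{conservative-law}, the ratio $\omega_{\mu}/\muq$ collapses to a product of $\sqrt{\muq}$-type factors evaluated at post-collisional velocities, carrying Gaussian decay in $v_*$ uniformly in $v$. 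The lower and upper bounds of order $\langle v\rangle^{\gamma}$ then follow from standard estimates on the Maxwellian-weighted $\gamma$-moment of $|v-v_*|$.

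For the compactness of $\CK$, the strategy is to rewrite each of the three gain terms as a Carleman-type integral operator $(\CK g)(v) = \int_{\BR} k(v,v_*)\,g(v_*)\,\ud v_*$ via pre/post collisional changes of variables. The kernel $k$ inherits Gaussian decay from the four $\sqrt{\muq}$-factors inside $\omega_{\mu}$, and one shows $k\in L^2(\BR\times\BR)$; Hilbert--Schmidtness then yields compactness on $L^2_v$. I expect this to be the main technical step, because the quantum factor $\omega_{\mu}$ and the cross-dependence between pre- and post-collisional velocities produce a kernel more intricate than its classical Boltzmann counterpart, requiring careful exploitation of the Gaussian cancellations afforded by $\omega_{\mu}$.

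For the null-space identification, the standard symmetrization of $\langle \CL g, h\rangle$ via the changes of variables $(v,v_*)\leftrightarrow(v',v_*')$ and $v\leftrightarrow v_*$ yields the symmetric bilinear form
\begin{align*}
 \langle \CL g, h\rangle = -\tfrac{1}{4}\iiint B\,\omega_{\mu}\,\mathcal{D}\!\left[\tfrac{g}{\sqrt{\muq}}\right]\,\mathcal{D}\!\left[\tfrac{h}{\sqrt{\muq}}\right]\,\ud\sigma\,\ud v_*\,\ud v,
\end{align*}
where $\mathcal{D}[\varphi] := \varphi' + \varphi_*' - \varphi - \varphi_*$. Taking $h=g$ yields non-positivity, and equality forces $g/\sqrt{\muq}$ to be a collisional invariant; Boltzmann's classical theorem on collision invariants then identifies this as a linear combination of $1$, $v$, $|v|^2$, establishing $\Ker\,\CL = \Span\{1,\,v,\,|v|^2\}\sqrt{\muq}$.

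Finally, the coercivity estimate is proved by contradiction combined with the compactness of $\CK$. Suppose there exists a sequence $(g_n)\subset (\Ker\,\CL)^{\perp}\cap D(\CL)$ with $\|g_n\|_{L^2_{\gamma}}=1$ and $-\langle \CL g_n,g_n\rangle\to 0$. The decomposition $\CL = -\nu + \CK$ together with the lower bound $\nu\gtrsim\langle v\rangle^{\gamma}$ bounds $(g_n)$ in $L^2_v$; by compactness of $\CK$ there is a subsequence along which $\CK g_n$ converges strongly, and the identity $\langle\nu g_n,g_n\rangle = \langle\CK g_n,g_n\rangle - \langle\CL g_n,g_n\rangle$ then yields strong $L^2_{\gamma}$-convergence of $g_n$ to some $g_\infty$ with $\CL g_\infty = 0$, $\|g_\infty\|_{L^2_{\gamma}} = 1$, and $g_\infty\in (\Ker\,\CL)^{\perp}$, a contradiction that delivers the spectral gap $\lambda>0$.
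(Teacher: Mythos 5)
Your overall architecture is the standard one (pointwise bounds on $\nu$, Carleman rewriting of $\CK$, symmetrization to identify $\Ker\CL$, compactness $+$ Weyl-type contradiction for the spectral gap), and bullets 1, 3 and 4 are argued essentially correctly. Note that the paper does not prove this lemma but imports it from \cite{JXZ2022JDE,JiangZhou2024Global}, so your sketch is filling in material delegated to the references.

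There is, however, a concrete gap in your compactness argument for $\CK$. The loss-type piece, whose kernel on the line involving $(g/\sqrt{\muq})_*$ is $\int_{\BS}B\,(\sqrt{\muq})'(\sqrt{\muq})_*'\ud\sigma$, indeed has joint Gaussian decay in $(v,v_*)$ and is Hilbert--Schmidt. But the two gain-type pieces, after the Carleman change of variables, acquire a Jacobian factor producing a $|v-v'|^{-1}$ singularity on the diagonal, and their kernels satisfy only
\begin{equation*}
  k_{\mathrm{gain}}(v,v') \;\lesssim\; \frac{1}{|v-v'|}\exp\!\Big(-c_1|v-v'|^2 - c_2\,\tfrac{(|v|^2-|v'|^2)^2}{|v-v'|^2}\Big),
\end{equation*}
exactly as in the classical hard-sphere case, since the quantum factors in $\omega_\mu$ are bounded and share the asymptotic Gaussian tail of $\sqrt{\mu}$. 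Such a kernel is \emph{not} in $L^2(\BR\times\BR)$: writing $u=v'-v$ and integrating over $v$, the exponent is degenerate in the directions perpendicular to $u$, so $\int\!\!\int k_{\mathrm{gain}}^2\,\ud v\,\ud v'=\infty$ for every $\gamma\in(0,1]$. Hence Hilbert--Schmidtness fails and the sentence "one shows $k\in L^2(\BR\times\BR)$" would not go through. The correct route is Grad's: establish Schur-type bounds $\sup_v\int |k|\ud v'<\infty$ (and the symmetric one), plus the weighted decay $\int|k(v,v')|\langle v'\rangle^{-s}\ud v'\lesssim\langle v\rangle^{-s-1}$, then realize $\CK$ as an operator-norm limit of Hilbert--Schmidt truncations (cutting off $|v-v'|\geqslant\delta$ and $|v|,|v'|\leqslant R$), using those bounds to make the tail small. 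With that substitution, the contradiction argument in your bullet 4 is unaffected.
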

%这里解释为什么\cite{ellis1975fluid}中的结论可以直接用。
\begin{remark}
  Any point in right plane of $\mathrm{Re}(z) = -\nu(0)$ is either resolvent or eigenvalue of finite multiplicity of multiplication operator $\nu(v)$. Then for $\widehat{\Lambda}(\xi) = \CL - iv \cdot \xi$, if $|\xi|\to 0$, the isolate points in spectrum are well-behavior and the resolve is bounded. One can refer to \cite{ellis1975fluid,gervais2024hydrodynamic} for more detailed arguments on spectral analysis of classical linearized Boltzmann operator $\CL_{\mathrm{c}}$ (`c' for classical), which hold for $\CL$ due to the self-adjointness, rotational invariant and spectral gap of $\CL$ as shown in the above lemma.
\end{remark}

\subsection{Analysis on the Scaled BFD Equation in Fourier Space}
We start with denoting by the linearized operator
\begin{align*}
\La^{\ve}:=\ve^{-2}\CL-\ve^{-1}v\cdot\nabla_x,
\end{align*}
and $U^{\ve}(t)$ the semi-group generated by $\La^{\ve}$. In the Fourier space for the space variable $x\in\BR$, the Fourier transform of the operator $\La^{\ve}$ will be
\begin{align*}
\widehat{\La}^{\ve}(\xi):=\ve^{-2}\CL-\ve^{-1} i v\cdot \xi,\quad \xi\in\BR,
\end{align*}
and its associated semi-group is denoted by $\widehat{U}^{\ve}(t):=e^{t\widehat{\La}^{\ve}(\xi)}$. Particularly, $\widehat{\La}^{1}(\xi) = \widehat{\La}(\xi)$. Then we get
\begin{align}
U^{\ve}(t)=\CF_x^{-1}\widehat{U}^{\ve}(t)\CF_x. \nonumber
\end{align}

Next, recall the nonlinear terms $\CQ_{\text{sym}}(f,g)$ and $\CT_{\text{sym}}(f,g,h)$ defined in \eqref{bilinear-sym}-\eqref{trilinear-sym}, we denote by the bilinear form $\widehat{\mathfrak{Q}}^{\ve}$ and trilinear form $\widehat{\mathfrak{T}}^{\ve}$ in the Fourier space
\begin{align*}
    \widehat{\mathfrak{Q}}^{\ve}[f,g](t,\xi):=&\ve^{-1}\int_0^t\widehat{U}^{\ve}(t-s)\widehat{\CQ}_{\text{sym}}(f(s),g(s))(\xi)\ud s,\\
    \widehat{\mathfrak{T}}^{\ve}[f,g,h](t,\xi):=&\int_0^t\widehat{U}^{\ve}(t-s)\widehat{\CT}_{\text{sym}}(f(s),g(s),h(s))(\xi)\ud s,
\end{align*}
with
\begin{align*}
 \widehat{\CQ}_{\text{sym}}(f,g)(\xi) :=&\int_{\BR}\CQ_{\text{sym}}(\widehat{f}(\xi-\xi^{\prime}),\widehat{g}(\xi^{\prime}))\ud\xi^{\prime},\\
 \widehat{\CT}_{\text{sym}}(f,g,h)(\xi) :=&\iint_{\BR\times\BR}\CT_{\text{sym}} (\widehat{f}(\xi-\xi^{\prime}-\xi^{\prime\prime}),\widehat{g}(\xi^{\prime}),\widehat{h}(\xi^{\prime\prime})) \ud\xi^{\prime}\ud\xi^{\prime\prime},
\end{align*}
we also denote by
 \begin{align}
  \mathfrak{Q}^{\ve}[f,g](t)=\CF_x^{-1}\widehat{\mathfrak{Q}}^{\ve}[f,g](t,\cdot)\CF_x,\quad \mathfrak{T}^{\ve}[f,g,h](t)=\CF_x^{-1}\widehat{\mathfrak{T}}^{\ve}[f,g,h](t,\cdot)\CF_x.\nonumber
 \end{align}

With the above notations, the BFD equation \eqref{BFD-perturbation} in Fourier space takes the \textit{Duhamel form}
 \begin{align}
  \widehat{f}^\varepsilon=\widehat{U}^{\ve}(t)\widehat{f}_{\mathrm{in}}^{\ve}+\widehat{\mathfrak{Q}}^{\ve}[\fve,\fve](t)+\widehat{\mathfrak{T}}^{\ve}[\fve,\fve,\fve](t).\nonumber %\label{f-Duhamel-form}
 \end{align}
 For $\widehat{U}^{\ve}(t)$, we have by scaling that
 \begin{align*}
     \widehat{U}^{\ve}(t,\xi)=\widehat{U}^1(\ve^{-2} t,\ve \xi),
 \end{align*}
 then we consider the case $\ve = 1$. For $|\xi|$ small enough, $\La^1(\xi)$ can be regarded as a small perturbation of $\CL$. Hence we have the following lemma concerning the localization of the spectrum of the operator $\La^1(\xi)$, which is a simple analogy of Theorem 1.8 of \cite{gervais2024hydrodynamic} in the low-frequency regime (see also \cite{ellis1975fluid}).
\begin{lemma}[]\label{lem-spectral-thm}
Denote by $\mathfrak{S}(\widehat{\La}^1)$ the spectrum of the operator $\widehat{\La}^1$ on $L^2_v$. There is a constant $\kappa>0$ small enough such that the following spectral and dynamical properties hold.
    \begin{enumerate}
        \item\textbf{Eigenvalues.}
        \begin{itemize}
            \item For $|\xi|>\kappa$, there exists $\lambda_0>0$ such that
            \begin{align*}
                \mathfrak{S}(\widehat{\La}^1)\cap\{z\mid \mathrm{Re}\,z>-\lambda_0\}=\emptyset.
            \end{align*}
            \item For $|\xi|\leqslant\kappa$, there exists a $\lambda_1>0$ such that the spectrum is at a positive distance from ${\operatorname{Re} z \geqslant 0}$, except for a finite number of small eigenvalues:
            \begin{align*}
                \mathfrak{S}(\widehat{\La}^1)\cap\{z\mid \mathrm{Re}\,z>-\lambda_1\}=\left\{\lambda_{\mathrm{NS}}(\xi), \lambda_{\mathrm{heat }}(\xi), \lambda_{ \mathrm{wave}-}(\xi), \lambda_{ \mathrm{wave}+}(\xi)\right\},
            \end{align*}
            and these eigenvalues $\lambda_{\star}(\xi)$ expand for $\xi \rightarrow 0$ as
        \begin{align*}
    \begin{aligned}
    \lambda_{\star}(\xi)=& -\nu_{\star}|\xi|^2+\mathcal{O}\left(|\xi|^3\right), \quad \star=\mathrm { NS, heat, }\\
    \lambda_{ \mathrm {wave }\pm }(\xi)= & -\nu_{\mathrm {wave }}|\xi|^2\pm i c|\xi|+\mathcal{O}\left(|\xi|^3\right),
\end{aligned}
\end{align*}
where $c$ is the sound speed, and the coefficients $0<\nu_{\star}<\infty$ for $\star=\mathrm{NS,\,heat,\,}\mathrm{wave}\pm$,
\begin{align}
\begin{aligned}
&\nu_{\mathrm{NS}}:=-\tfrac{1}{8\sqrt{3E_2}}\int_{\BR}B^{\prime}(v):B(v)\muq\ud v, \\
&\nu_{\mathrm{heat}}:=-\tfrac{1}{6E\sqrt{K(K-1)}}\int_{\BR}A^{\prime}(v)\cdot A(v)\muq\ud v, \\
& \nu_{\mathrm{wave}}:=\tfrac{1}{3} \nu_{\mathrm{NSF}}+\tfrac{E^2(K-1)}{2} \nu_{\mathrm{heat}},
\end{aligned}\nonumber
\end{align}
\end{itemize}
where the constants $E=3E_2,K=\frac{3E_4+6E_{22}}{E^2}$ and constants $E_2,\,E_4,\,E_{22}$, as well as the functions $A^{\prime}(v),\,A(v),\,B^{\prime}(v),\,B(v)$ are given in \eqref{const-coefficents}-\eqref{func-efficients}.
\item \textbf{Spectral projectors.} The spectral projectors $\mathcal{P}_{\star} : L_\gamma^2 \to (L_\gamma^2)^{\prime}$ for $\star \in \{\mathrm{NS},\, \mathrm{heat},\, \mathrm{wave}\pm\}$ are bounded linear operators associated with the corresponding small eigenvalues. These projectors can be written as
\begin{align*}
\mathcal{P}_{\star}(\xi)=\mathcal{P}_{\star}^{(0)}\left(\tfrac{\xi}{|\xi|}\right)+|\xi| \mathcal{P}_{\star}^{(1)}\left(\tfrac{\xi}{|\xi|}\right)+|\xi|^2 \mathcal{P}_{\star}^{(2)}(\xi)\,,
\end{align*}
where the zero order coefficients are
\begin{align}
    \begin{aligned}
    &\mathcal{P}_{\mathrm {NS}}^{(0)}\left(\tfrac{\xi}{|\xi|}\right) \hat{f}(\xi, v)=\tfrac{1}{\sqrt{E_2}}\Big(\Pi_\xi\cdot\int_{\BR}\widehat{f}(\xi,v)v\sqrt{\muq}\ud v\Big)\cdot v\sqrt{\muq},\quad \Pi_\xi=\Id-\tfrac{\xi\otimes \xi}{|\xi|^2}\,,\\
     &\mathcal{P}_{\star}^{(0)}\left(\tfrac{\xi}{|\xi|}\right) \hat{f}(\xi, v)=\psi_{\star}(\tfrac{\xi}{|\xi|})\int_{\BR}\widehat{f}(\xi,v)\psi_{\star}(\tfrac{\xi}{|\xi|})\ud v\,,\quad\star \in\{\mathrm{heat},\,\mathrm {wave}\pm\}.
\end{aligned}\nonumber
\end{align}
Here, the zeroth order eigenfunctions $\psi_{\mathrm {heat }}$ and $\psi_{\mathrm {wave}\pm}$ are given by
\begin{align}
    \begin{aligned}
       &\psi_{\mathrm{heat}}(v):=\tfrac{1}{\sqrt{K(K-1)}}\left(K-\tfrac{|v|^2}{E}\right) \sqrt{\muq}\,,\\
 &\psi_{ \mathrm{wave}\pm }(\tfrac{\xi}{|\xi|}, v):=\tfrac{1}{\sqrt{2 K}}\left(1 \pm \sqrt{\tfrac{3 K}{E}} \tfrac{\xi}{|\xi|} \cdot v+\tfrac{1}{E}\left(|v|^2-E\right)\right) \sqrt{\muq}\,.
    \end{aligned}\nonumber
\end{align}
\item \textbf{Decomposition.} Moreover, if $\star \neq \star^{\prime}$, then $\mathcal{P}_{\star}^{(0)} \mathcal{P}_{\star^{\prime}}^{(0)}=0$ and the orthogonal projector $\mathbf{P}_0$ onto $\operatorname{Ker} \CL$ satisfies
\begin{align}
    \mathbf{P}_0=\sum_{\star \in\{\mathrm{NS},\, \mathrm{heat},\,\mathrm {wave} \pm\}} \mathcal{P}_{\star}^{(0)}\left(\tfrac{\xi}{|\xi|}\right) .\nonumber
\end{align}
\item \textbf{Decay estimates.} Let $\delta_0:=\min\{\lambda_0,\lambda_1\}$ and $\chi$ be a smooth cutoff function. We denote by the spectral projection
\begin{align}
    \CP(\xi):=\chi(\tfrac{\xi}{\kappa})\sum_{\star\in\{\mathrm{NS,\,heat,\,wave}\pm\}}\CP_{\star}(\xi).\nonumber
\end{align}
Then the $C^0$ semi-group $\widehat{U}^1(t)$ generated by $\widehat{\La}^1$ satisfies that for any $\delta \in\left(0, \delta_0\right)$, $\xi \in \BR$ and $f \in L^2_v$,
\begin{align}
\sup _{t \geqslant 0} e^{2 \delta_0 t} \| \widehat{U}^1(t) & (\Id-\CP(\xi))\widehat{f} \|_{L^2_v}^2\leqslant C_\delta\|(\operatorname{Id}-\CP(\xi)) \widehat{f}\|_{L^2_v}^2. \label{ineq-decay}
\end{align}
    \end{enumerate}
\end{lemma}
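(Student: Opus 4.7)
The plan is to mimic the strategy used for the classical linearized Boltzmann operator (as in Ellis--Pinsky \cite{ellis1975fluid} and Gervais--Lods \cite{gervais2024hydrodynamic}), exploiting the three structural features of $\mathcal{L}$ stated in Lemma \ref{lem-linear-operator-propreties}: $\mathcal{L}$ is self-adjoint and non-positive on $L^2_v$, it decomposes as $\mathcal{L} = -\nu(v) + \mathcal{K}$ with $\nu(v) \sim \langle v \rangle^{\gamma}$ and $\mathcal{K}$ compact, and it has a coercivity (spectral gap) estimate of the form $-\langle \mathcal{L}g, g \rangle \geqslant \lambda \|\mathbf{P}_0^{\perp} g\|_{L^2_{\gamma}}^2$. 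These are precisely the structural ingredients required by the abstract spectral machinery of \cite{gervais2024hydrodynamic}, so the whole proof amounts to checking that the BFD linearization fits in that framework and then reading off the statements.

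First I would treat the high-frequency regime $|\xi| > \kappa$. Here the idea is to use the $L^2_v$-dissipativity of $\mathcal{L}$ together with a Weyl-type perturbation argument: writing $\widehat{\Lambda}^1(\xi) = -\nu(v) - iv\cdot\xi + \mathcal{K}$, the operator $-\nu(v) - iv\cdot\xi$ is a normal multiplication operator whose spectrum lies in $\{\operatorname{Re} z \leqslant -\nu(0)\}$, and $\mathcal{K}$ is $\mathcal{L}^2_v$-compact (hence relatively compact with respect to $-\nu(v) - iv\cdot\xi$). Combined with the spectral gap and the enlargement/factorization argument of \cite{gervais2024hydrodynamic}, this yields the existence of $\lambda_0 > 0$ such that $\mathfrak{S}(\widehat{\Lambda}^1(\xi)) \cap \{\operatorname{Re} z > -\lambda_0\} = \emptyset$ for $|\xi| > \kappa$.

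For the low-frequency regime $|\xi| \leqslant \kappa$ I would apply Kato's analytic perturbation theory viewing $iv \cdot \xi$ as a small perturbation of $\mathcal{L}$. The unperturbed operator $\mathcal{L}$ has the isolated eigenvalue $0$ of multiplicity $5$ with eigenspace $\operatorname{Ker} \mathcal{L} = \operatorname{Span}\{1, v_1, v_2, v_3, |v|^2\}\sqrt{\mu_q}$, so for $|\xi|$ small the Riesz projector associated with the cluster of eigenvalues bifurcating from $0$ is analytic in $\xi$, and the full operator can be block-reduced to a $5 \times 5$ matrix depending analytically on $\xi$. Rotational invariance of $\mathcal{L}$ together with the parity properties of the kernel basis reduces the diagonalization of this matrix, after a suitable orthonormalization adapted to $\tfrac{\xi}{|\xi|}$, to computing the expansions of its eigenvalues up to second order in $|\xi|$. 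The coefficients $\nu_{\mathrm{NS}}$, $\nu_{\mathrm{heat}}$, $\nu_{\mathrm{wave}}$ and the sound speed $c$ are then obtained by second-order perturbation formulas involving the pseudo-inverses $A'(v), B'(v)$ defined through $\mathcal{L}A' = A$, $\mathcal{L}B' = B$ (see \eqref{func-efficients}); this gives the four curves $\lambda_{\mathrm{NS}}, \lambda_{\mathrm{heat}}, \lambda_{\mathrm{wave}\pm}$ and the explicit zeroth-order spectral projectors $\mathcal{P}_{\star}^{(0)}$ stated in the lemma. The decomposition of $\mathbf{P}_0$ into the $\mathcal{P}_{\star}^{(0)}$ follows because these are mutually orthogonal rank-one (or rank-two, for $\mathrm{NS}$) projectors whose ranges together span $\operatorname{Ker} \mathcal{L}$.

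Finally, for the decay estimate \eqref{ineq-decay}, one writes
\begin{equation*}
  \widehat{U}^1(t)(\mathrm{Id} - \mathcal{P}(\xi)) = \frac{1}{2\pi i} \oint_{\Gamma} e^{tz} (z - \widehat{\Lambda}^1(\xi))^{-1}(\mathrm{Id} - \mathcal{P}(\xi)) \, dz
\end{equation*}
with a contour $\Gamma$ lying in $\{\operatorname{Re} z \leqslant -\delta\}$, using the uniform resolvent estimate for $\widehat{\Lambda}^1(\xi)$ away from the isolated eigenvalues $\lambda_{\star}(\xi)$. For $|\xi| > \kappa$ this is immediate from the first part; for $|\xi| \leqslant \kappa$ the projector $\mathcal{P}(\xi)$ removes exactly the perturbed eigenvalues, leaving the rest of the spectrum at distance at least $\delta_0$ from the imaginary axis. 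I expect the main obstacle to be the careful treatment of the perturbative expansion of the $5\times 5$ reduced matrix, and in particular verifying that the nonsymmetric perturbation $-iv\cdot\xi$ still yields real second-order corrections with the advertised positive signs $\nu_{\star} > 0$; this is where the self-adjointness and coercivity of $\mathcal{L}$ combined with the explicit formulas \eqref{func-efficients} for $A'(v), B'(v)$ are crucial.
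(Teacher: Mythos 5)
Your proposal matches the paper's approach essentially exactly: the paper in fact gives no proof of this lemma, instead stating (in the remark preceding it) that the result is a ``simple analogy'' of Ellis--Pinsky \cite{ellis1975fluid} and Gervais--Lods \cite{gervais2024hydrodynamic}, applicable because $\mathcal{L}$ possesses the same structural properties --- self-adjointness, rotational invariance, the decomposition $\mathcal{L}=-\nu+\mathcal{K}$ with $\mathcal{K}$ compact, and the spectral gap --- that drive the classical analysis. You have correctly identified all of these ingredients and sketched the three standard steps (high-frequency via relative compactness of $\mathcal{K}$, low-frequency via Kato analytic perturbation of the $5$-dimensional kernel, decay via a contour integral with uniform resolvent bounds), which is precisely what a full expansion of the paper's citation would require. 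The only point worth being more careful about, and which you already flag, is the uniformity in $\xi$ of the resolvent bound needed for the decay estimate; this is nontrivial in the whole-space (continuous-frequency) setting and is exactly what the enlargement/factorization machinery of \cite{gervais2024hydrodynamic} is designed to supply.
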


The fourth property in Lemma \ref{lem-spectral-thm} implies that we can decompose $U^{\ve}(t)$ into
\begin{align}
U^{\varepsilon}(t)=U^{\varepsilon, \flat}(t)+U^{\varepsilon, \sharp}(t),   \nonumber
\end{align}
where in Fourier variables,
\begin{align}\label{def-U-b-Fourier}
    \widehat{U}^{\varepsilon, \flat}(t, \xi):=\chi\left(\tfrac{\varepsilon|\xi|}{\kappa}\right) \sum_{\star \in\{\mathrm{NS}, \text { heat }, \text { wave}\pm\}} e^{\lambda_{\star}(\varepsilon \xi) \tfrac{t}{\varepsilon^2}} \mathcal{P}_{\star}(\varepsilon \xi),
\end{align}
and also from \eqref{ineq-decay}, it follows that for any $t \geqslant 0$,
\begin{align*}
    \left\|\widehat{U}^{\varepsilon, \sharp}(t, \xi)\right\|_{L_v^2 \rightarrow L_v^2} \lesssim e^{-\tfrac{\delta_0}{2\varepsilon^2}t},
\end{align*}
uniformly for $\xi\in\BR$. In the study of the limit $\varepsilon \rightarrow 0$ of \eqref{BFD-perturbation}, it will be useful to decompose $U^{\varepsilon, \flat}(t)$ into a part independent of $\varepsilon$ and a remainder, which will be shown to go to zero in a sense to be made precise later:%copy的
\begin{align*}
    U^{\varepsilon, \flat}=U_{\mathrm{NSF}}+\widetilde{U}_{\mathrm{NSF}}^{\varepsilon}+U_{\mathrm{wave}}^{\varepsilon, \flat},
\end{align*}
where in Fourier variables,
\begin{align}
    \begin{aligned}\label{def-U-NSF-wave-Fourier}
& \widehat{U}_{\mathrm{NSF}}(t, \xi):=e^{-\nu_{\mathrm{NS}}|\xi|^2 t} \mathcal{P}_{\mathrm{NS}}^{(0)}\left(\tfrac{\xi}{|\xi|}\right)+e^{-\nu_{\mathrm {heat }}|\xi|^2 t} \mathcal{P}_{\mathrm {heat }}^{(0)}\left(\tfrac{\xi}{|\xi|}\right), \\
& \widehat{U}_{\mathrm {wave }}^{\varepsilon, \flat}(t, \xi):=\chi\left(\tfrac{\varepsilon|\xi|}{\kappa}\right) \sum_{\star =\mathrm{wave}\pm} e^{\lambda_\star(\varepsilon \xi) \tfrac{t}{\varepsilon^2}} \mathcal{P}_\star(\varepsilon \xi).
\end{aligned}
\end{align}

Similarly, we also have the following decomposition for the bilinear term,
\begin{align}
  &\mathfrak{Q}^{\ve}[f,g](t)=\mathfrak{Q}^{\ve,\flat}[f,g](t)+\mathfrak{Q}^{\ve,\sharp}[f,g](t),\label{decom-bi}
\end{align}
where
\begin{align}
    \begin{aligned}
       &\widehat{\mathfrak{Q}}^{\ve,\flat}[f,g](t):=\ve^{-1}\int_0^t\widehat{U}^{\ve,\flat}(t-s)\widehat{\CQ}_{\text{sym}}(f(s),g(s))(\xi)\ud s,\\
       &\widehat{\mathfrak{Q}}^{\ve,\sharp}[f,g](t):=\ve^{-1}\int_0^t\widehat{U}^{\ve,\sharp}(t-s)\widehat{\CQ}_{\text{sym}}(f(s),g(s))(\xi)\ud s,
    \end{aligned}\nonumber
\end{align}
with $\widehat{U}^{\ve,\flat}$ defined in \eqref{def-U-b-Fourier}. In a same way, the source terms can be decomposed as
\begin{align}
\mathfrak{Q}^{\ve,\flat}[f,g](t)=\mathfrak{Q}_{\mathrm{NSF}}[f,g](t)+\widetilde{\mathfrak{Q}}^{\ve,\flat}_{\mathrm{NSF}}[f,g](t)+\mathfrak{Q}^{\ve,\flat}_{\mathrm{wave}}[f,g](t), \nonumber
\end{align}
where
\begin{align}
    &\widehat{\mathfrak{Q}}_{\mathrm{NSF}}[f,g](t):=\sum_{\star\in\{\mathrm{NS},\mathrm{heat}\}}\int_0^t e^{-\nu_{\star}\left(t-s\right)|\xi|^2}|\xi| \mathcal{P}_{\star}^{(1)}\left(\frac{\xi}{|\xi|}\right) \widehat{\CQ}_{\mathrm {sym }}\left(f\left(s\right), g\left(s\right)\right)(k) \mathrm{d} s,\nonumber\\
       &\widehat{\mathfrak{Q}}^{\ve,\flat}_{\mathrm{wave}}[f,g](t):=\ve^{-1}\int_0^t\widehat{U}^{\ve,\flat}_{\mathrm{wave}\pm}(t-s)\widehat{\CQ}_{\text{sym}}(f(s),g(s))(\xi)\ud s,\nonumber
\end{align}
with $\widehat{U}_{\mathrm{NSF}}(t)$ and $\widehat{U}^{\ve}_{\mathrm{wave}}(t)$ defined in \eqref{def-U-NSF-wave-Fourier}. For the trilinear source term, we can also decompose
\begin{align}
    &\mathfrak{T}^{\ve}[f,g,h](t)=\mathfrak{T}^{\ve,\flat}[f,g,h](t)+\mathfrak{T}^{\ve,\sharp}[f,g,h](t),\label{def-tri-decom}
\end{align}
where
\begin{align}
    &\widehat{\mathfrak{T}}^{\ve,\flat}[f,g,h](\xi,t):=\int_0^t\widehat{U}^{\ve,\flat}(t-s)\widehat{\CT}_{\text{sym}}(f(s),g(s),h(s))(\xi)\ud s,\nonumber\\
       &\widehat{\mathfrak{T}}^{\ve,\sharp}[f,g,h](\xi,t):=\int_0^t\widehat{U}^{\ve,\sharp}(t-s)\widehat{\CT}_{\text{sym}}(f(s),g(s),h(s))(\xi)\ud s.\nonumber
\end{align}
$\mathfrak{T}^{\ve,\flat}[f,g,h](t)$ can be further decomposed into
\begin{align}
\mathfrak{T}^{\ve,\flat}[f,g,h](t)=\widetilde{\mathfrak{T}}^{\ve,\flat}_{\mathrm{NSF}}[f,g,h](t)+\mathfrak{T}^{\ve,\flat}_{\mathrm{wave}}[f,g,h](t), \nonumber
\end{align}
where
\begin{align}
       &\widehat{\mathfrak{T}}^{\ve,\flat}_{\mathrm{wave}}[f,g,h](t):=\int_0^t\widehat{U}^{\ve,\flat}_{\mathrm{wave}}(t-s)\widehat{\CT}_{\text{sym}}(f(s),g(s),h(s))(\xi)\ud s.\nonumber
\end{align}

For the limiting function $g$ constructed in \eqref{def-solution-g}, by using the above notations, it can be verified that
\begin{align}\label{eq-g}
    g(t)=U_{\mathrm{NSF}}(t)g_{\mathrm{in}}+\mathfrak{Q}_{\mathrm{NSF}}[g,g](t).
\end{align}

\subsection{Some Results Concerning the Convergence}
In order to understand the limiting progress from BFD equation \eqref{BFD-perturbation} to NSF equation \eqref{NSF}, we shall recall some classical results related to the solutions of NSF equation.
\begin{lemma}[\cite{gallagher2003asymptotics}]\label{lem-fluid-solution}
    Consider the NSF equation \eqref{NSF} with initial data $(\rho_{\mathrm{in}},u_{\mathrm{in}},\vartheta_{\mathrm{in}})(x)\in H^{\frac{1}{2}}_x$ and $g_{\mathrm{in}}(x,v)\in H^{\frac{1}{2}}_xL^2_v$ being given by \eqref{def-initial-data}. Then there exists a maximal fluid solution $g \in \widetilde{L}_T^{\infty} H_x^{\frac{1}{2}} L_v^2 \cap L_T^2 H_x^{\frac{3}{2}} L_v^2$ for $T<T^{\star}$, where the maximal life span $T^{\star}>0$ satisfies
    \begin{align}\label{blow-up}
        \lim _{T \rightarrow T^{\star}}\|g\|_{L_T^2 H_x^{\frac{3}{2}} L_v^2}=\infty,%\nonumber
    \end{align}
    and
    \begin{align}
        \|g\|_{\tilde{L}_T^{\infty} H_x^{\frac{1}{2}} L_v^2}+\|g\|_{L_T^2 H_x^{\frac{3}{2}} L_v^2} \lesssim\left\|g_{\mathrm{in}}\right\|_{H_x^{\frac{1}{2}} L_v^2},\label{ineq-lower-bound}
    \end{align}
   where the constant may depend on $T^{\star}$ but is uniform if $T^{\star}=\infty$.
\end{lemma}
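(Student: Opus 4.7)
The plan is to reduce Lemma~\ref{lem-fluid-solution} to a classical Fujita--Kato type existence theorem for the fluid unknowns $(\rho,u,\vartheta)$ and then transfer the resulting bounds to $g$ via the explicit formula \eqref{def-solution-g}. First I would invoke the incompressibility $\nabla_x\cdot u=0$ and the Boussinesq relation $\rho+\vartheta=0$, both of which propagate under the flow of \eqref{NSF}, to decouple the system into a Navier--Stokes equation for the velocity and a transport-diffusion equation for the temperature,
\begin{align*}
\partial_t u - \tfrac{\nu_*}{E_2}\Delta_x u = -\mathbb{P}(u\cdot\nabla_x u),\qquad \partial_t\vartheta - \tfrac{\kappa_*}{C_A}\Delta_x\vartheta = -u\cdot\nabla_x\vartheta,
\end{align*}
where $\mathbb{P}$ is the Leray projector onto divergence-free vector fields and $\rho=-\vartheta$. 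Since the initial datum $(\rho_{\mathrm{in}},u_{\mathrm{in}},\vartheta_{\mathrm{in}})\in H^{1/2}_x(\BR)$, this is precisely the classical critical setting.

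The core of the argument is a fixed-point scheme for the Duhamel formulation
\begin{align*}
u(t) = e^{\tfrac{\nu_*}{E_2}t\Delta_x}u_{\mathrm{in}} - \int_0^t e^{\tfrac{\nu_*}{E_2}(t-s)\Delta_x}\mathbb{P}(u\cdot\nabla_x u)(s)\,\mathrm{d}s,
\end{align*}
and an analogous formula for $\vartheta$, set in the Chemin--Lerner space $X_T := \widetilde L^\infty_T H^{1/2}_x \cap L^2_T H^{3/2}_x$. The two ingredients I need are the standard linear estimate $\|e^{t\Delta_x}f\|_{X_T}\lesssim \|f\|_{H^{1/2}_x}$ and the bilinear bound
\begin{align*}
\Big\|\int_0^t e^{(t-s)\Delta_x}\nabla_x\cdot(u\otimes v)(s)\,\mathrm{d}s\Big\|_{X_T}\lesssim \|u\|_{X_T}\|v\|_{X_T},
\end{align*}
together with its scalar analogue. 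The bilinear estimate is obtained by Littlewood--Paley analysis: duality rewrites it as a product estimate $\|u\otimes v\|_{L^2_T H^{1/2}_x}\lesssim \|u\|_{L^4_T H^1_x}\|v\|_{L^4_T H^1_x}$ using $H^1_x\hookrightarrow L^6_x$ in $\mathbb{R}^3$, and then interpolation gives $L^4_T H^1_x \hookrightarrow (\widetilde L^\infty_T H^{1/2}_x, L^2_T H^{3/2}_x)_{1/2,2}$. A standard contraction argument on a small ball of $X_T$ then produces a unique local solution, uniqueness in the full space follows from the same bilinear bound applied to the difference of two solutions, and iteration yields a maximal time $T^*>0$ together with the classical Fujita--Kato blow-up criterion \eqref{blow-up}.

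To conclude, I would observe that $\sqrt{\muq}$, $v\sqrt{\muq}$ and $(\tfrac{|v|^2}{2}-K_g)\sqrt{\muq}$ are mutually orthogonal elements of $L^2_v$ with finite $L^2_v$ norms, so \eqref{def-solution-g} yields the equivalence $\|g(t)\|_{H^m_x L^2_v}\sim \|(\rho,u,\vartheta)(t)\|_{H^m_x}$ for every $m\geqslant 0$. This transfers the fluid bounds directly to \eqref{ineq-lower-bound} and transports the blow-up criterion from $\|(\rho,u,\vartheta)\|_{L^2_T H^{3/2}_x}$ to $\|g\|_{L^2_T H^{3/2}_x L^2_v}$. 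The main obstacle I anticipate is closing the bilinear estimate at the critical index $1/2$ in three dimensions: a naive Sobolev product inequality is too coarse because $H^{3/2}_x$ just fails to embed into $L^\infty_x$, and one has to exploit the full Chemin--Lerner paraproduct structure so that the heat-semigroup gain of one derivative precisely compensates the derivative hidden in $u\cdot\nabla_x u$ without wasting the critical scaling.
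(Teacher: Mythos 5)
The paper does not prove this lemma; it is cited directly from \cite{gallagher2003asymptotics}, so there is no in-text proof to compare against. Your sketch supplies the expected argument: after using the Boussinesq relation and incompressibility to reduce \eqref{NSF} to a critical Navier--Stokes system for $u$ coupled with a transport--diffusion equation for $\vartheta$, you run a Fujita--Kato fixed-point in the Chemin--Lerner space $\widetilde L^\infty_T H^{1/2}_x\cap L^2_T H^{3/2}_x$, using the heat-semigroup maximal-regularity gain of one derivative against the $L^2_T H^{1/2}_x$ norm of the quadratic flux and the product estimate $H^1_x\cdot H^1_x\hookrightarrow H^{1/2}_x$ closed via the interpolation $\|h\|_{\widetilde L^4_T H^1_x}\lesssim\|h\|_{\widetilde L^\infty_T H^{1/2}_x}^{1/2}\|h\|_{L^2_T H^{3/2}_x}^{1/2}$; the maximal time and blow-up criterion \eqref{blow-up} then follow by the standard continuation argument. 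This is indeed the method of the cited reference, so the approach matches.

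One small imprecision worth flagging: you assert that $\sqrt{\muq}$, $v\sqrt{\muq}$ and $\bigl(\tfrac{|v|^2}{2}-K_g\bigr)\sqrt{\muq}$ are mutually orthogonal in $L^2_v$. Orthogonality of the first and third would require $\int\bigl(\tfrac{|v|^2}{2}-K_g\bigr)\muq\,\ud v=0$, i.e.\ $K_g=\tfrac{3E_2}{2E_0}$, whereas the paper defines $K_g+1=K_A=\tfrac{E_4+2E_{22}}{2E_2}$; there is no reason these coincide. The family is, however, linearly independent in $L^2_v$, and since \eqref{def-solution-g} expresses $g(t,x,\cdot)$ as a fixed finite-dimensional linear combination with coefficients $(\rho,u,\vartheta)(t,x)$, linear independence alone already gives the norm equivalence $\|g(t)\|_{H^m_x L^2_v}\sim\|(\rho,u,\vartheta)(t)\|_{H^m_x}$ that you use to transfer the fluid bounds and the blow-up criterion. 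So the conclusion is unaffected, but the justification should invoke linear independence rather than orthogonality.
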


In \cite{JXZ2022JDE}, the authors show the convergence of $f^\ve$ in $L_t^\infty H_x^3 L_v^2$. However, when the case comes to the lower regularity framework here, there is the possibility of blow-up if we want to upgrade the regularity of the kinetic solutions, by noting \eqref{blow-up}. Thus, we define a smooth function
\begin{align}
    g^{\varepsilon}(t, x, v):=\big\{\rho^{\varepsilon}(t, x)+u^{\varepsilon}(t, x) \cdot v+\theta^{\varepsilon}(t, x)(\tfrac{|v|^2}{2}-K_g)\big\} \sqrt{\muq},\nonumber
\end{align}
 where $(\rho^{\varepsilon}, u^{\varepsilon}, \theta^{\varepsilon})$ solves NSF equation \eqref{NSF} with the initial data $\psi\left(\varepsilon^\alpha\left|D_x\right|\right)\left(\rho_{\mathrm {in }}, u_{\mathrm {in }}, \theta_{\mathrm {in }}\right)$. We then make a detour to show $f^\ve \to g$ in $\widetilde{L}_T^{\infty} H_x^{\frac{1}{2}} L_v^2$. That is, we firstly prove $g^\ve \to g$ then we prove $f^\ve \to g^\ve$, as $\ve\to 0$. To this end, we have the following lemma.
 \begin{lemma}[\cite{gallagher2020convergence}]\label{lem-smooth-fluid-solution}
     Let $T^*$ be given in above lemma. For any $T<T^*$ and $\ve$ sufficiently small, it holds that
     \begin{align}
         \left\|g^{\varepsilon}-g\right\|_{\widetilde{L}_T^{\infty} H_x^{\frac{1}{2}} L_v^2}+\left\|g^{\varepsilon}-g\right\|_{L_T^2 H_x^{\frac{3}{2}} L_\gamma^2} {\longrightarrow} 0,\quad \ve\to 0.\label{limit-smooth-appoximation}
     \end{align}
     Moreover, for $m>\frac{1}{2}$ there are estimates for $g^{\ve}$:
     \begin{align}
\left\|g^{\varepsilon}\right\|_{\widetilde{L}_T^{\infty} H_x^m L_v^2}+\left\|g^{\varepsilon}\right\|_{L_T^2 H_x^{m+1} L_\gamma^2}  \lesssim\left\|\mathbf{P}_0 f_{\mathrm{in}}^{\varepsilon}\right\|_{H_x^m L_v^2} \exp \left(C\left\|g_{\mathrm{in}}\right\|_{H_x^{\frac{1}{2}} L_v^2}^2\right).\nonumber
     \end{align}
In particular,
\begin{align}
  &\left\|g^{\varepsilon}\right\|_{\widetilde{L}_T^{\infty} H_x^m L_v^2}+\left\|g^{\varepsilon}\right\|_{L_T^2 H_x^{m+1} L_\gamma^2} \lesssim \varepsilon^{-\alpha\left(m-\frac{1}{2}\right)}\left\|g_{\mathrm{in}}\right\|_{H_x^{\frac{1}{2}} L_v^2} \exp \left(C\left\|g_{\mathrm{in}}\right\|_{H_x^{\frac{1}{2}} L_v^2}^2\right),\label{ineq-gve-higher-bounds}\\
  &\left\|g^{\varepsilon}\right\|_{\widetilde{L}_T^4 H_x^{m+\frac{1}{2}} L_\gamma^2} \lesssim \varepsilon^{-\alpha\left(m-\frac{1}{2}\right)}\left\|g_{\mathrm{in}}\right\|_{H_x^{\frac{1}{2}} L_v^2} \exp \left(C\left\|g_{\mathrm{in}}\right\|_{H_x^{\frac{1}{2}} L_v^2}^2\right) .\label{ineq-L4-g-ve}
\end{align}
 \end{lemma}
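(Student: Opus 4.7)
The plan is to exploit the fact that $g^\varepsilon - g$ and $g^\varepsilon$ are moment polynomials in $v$ multiplied by $\sqrt{\mu_{\mathrm q}}$, so that every $H^s_x L^2_v$ norm reduces, up to harmless multiplicative constants given by moments of $\mu_{\mathrm q}$ (the numbers $E_i$ in \eqref{const-coefficents}), to the corresponding $H^s_x$ norm of the fluid quantities $(\rho^\varepsilon - \rho, u^\varepsilon - u, \theta^\varepsilon - \vartheta)$, respectively $(\rho^\varepsilon, u^\varepsilon, \theta^\varepsilon)$. The whole statement therefore reduces to classical low-regularity NSF theory, the sole role of the cutoff $\psi(\varepsilon^\alpha|D_x|)$ being to generate arbitrarily smooth initial data at the quantified cost $\varepsilon^{-\alpha(m-1/2)}$.

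\textbf{Convergence step.} Since $\psi$ is smooth and compactly supported with $\psi(0)=1$, Fourier-side dominated convergence immediately gives $\psi(\varepsilon^\alpha|D_x|)(\rho_{\mathrm{in}}, u_{\mathrm{in}}, \vartheta_{\mathrm{in}}) \to (\rho_{\mathrm{in}}, u_{\mathrm{in}}, \vartheta_{\mathrm{in}})$ in $H^{1/2}_x$ as $\varepsilon \to 0$. On $[0,T]\subset [0,T^\star)$, the solution provided by Lemma \ref{lem-fluid-solution} depends continuously on the data in the Chemin--Lerner space $\widetilde L^\infty_T H^{1/2}_x \cap L^2_T H^{3/2}_x$; this is exactly the stability result underlying \cite{gallagher2020convergence}. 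Applied to $(\rho^\varepsilon, u^\varepsilon, \theta^\varepsilon)$ it yields convergence of the fluid triple and hence, by the observation above, of $g^\varepsilon$ to $g$ in the two norms appearing in \eqref{limit-smooth-appoximation}.

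\textbf{Higher-regularity step.} For $m>1/2$ I would run the standard $H^m_x$ energy estimate for NSF. A para-product decomposition of the quadratic transport terms $u^\varepsilon \cdot \nabla_x u^\varepsilon$ and $u^\varepsilon \cdot \nabla_x \theta^\varepsilon$, combined with the divergence-free constraint and the Boussinesq identity, produces
\begin{equation*}
\tfrac12\ddt\|(\rho^\varepsilon,u^\varepsilon,\theta^\varepsilon)\|_{H^m_x}^2 + c_0\|(\rho^\varepsilon,u^\varepsilon,\theta^\varepsilon)\|_{H^{m+1}_x}^2 \lesssim \|(\rho^\varepsilon,u^\varepsilon,\theta^\varepsilon)\|_{H^m_x}^2\,\|(\rho^\varepsilon,u^\varepsilon,\theta^\varepsilon)\|_{H^{3/2}_x}^2 .
\end{equation*}
Integrating in time and applying Gr\"onwall's lemma against the bound $\|(\rho^\varepsilon,u^\varepsilon,\theta^\varepsilon)\|_{L^2_T H^{3/2}_x}^2 \lesssim \|g_{\mathrm{in}}\|_{H^{1/2}_x L^2_v}^2$ from \eqref{ineq-lower-bound} yields the exponential prefactor $\exp\bigl(C\|g_{\mathrm{in}}\|_{H^{1/2}_x L^2_v}^2\bigr)$ multiplying $\|\psi(\varepsilon^\alpha|D_x|)g_{\mathrm{in}}\|_{H^m_x L^2_v}$. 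Since $\mathrm{supp}\,\psi \subset \{|z|\leqslant R\}$ forces the Fourier support of $\psi(\varepsilon^\alpha|D_x|)g_{\mathrm{in}}$ inside $\{|\xi|\leqslant R\varepsilon^{-\alpha}\}$, a trivial Bernstein-type dilation estimate gives $\|\psi(\varepsilon^\alpha|D_x|)g_{\mathrm{in}}\|_{H^m_x L^2_v} \lesssim \varepsilon^{-\alpha(m-1/2)}\|g_{\mathrm{in}}\|_{H^{1/2}_x L^2_v}$, producing \eqref{ineq-gve-higher-bounds}. Finally, \eqref{ineq-L4-g-ve} follows by the time interpolation
\begin{equation*}
\|h\|_{\widetilde L^4_T H^{m+1/2}_x} \lesssim \|h\|_{\widetilde L^\infty_T H^m_x}^{1/2}\,\|h\|_{L^2_T H^{m+1}_x}^{1/2},
\end{equation*}
applied to $h=g^\varepsilon$.

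\textbf{Main obstacle.} The delicate point is carrying the Chemin--Lerner norm $\widetilde L^\infty_T H^m_x$, rather than the plainer $L^\infty_T H^m_x$, through the para-product estimate on the NSF nonlinearity, so that the critical-looking $H^{3/2}_x$ factor (and not a super-critical $H^m_x$ factor) appears on the right-hand side; only then can Gr\"onwall close against the uniform $L^2_T H^{3/2}_x$ bound without a loss that would spoil the $\varepsilon^{-\alpha(m-1/2)}$ scaling. This is precisely the setting of \cite{gallagher2020convergence}, whose estimates I would invoke directly rather than re-derive; the only genuinely new ingredient is the elementary dilation estimate for the Fourier multiplier $\psi(\varepsilon^\alpha|D_x|)$.
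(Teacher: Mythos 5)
Your proposal is correct and follows essentially the same route the paper delegates to its references: reduce the kinetic norms of $g^\varepsilon$ (a moment polynomial in $v$ times $\sqrt{\mu_{\mathrm q}}$, hence $L^2_v$- and $L^2_\gamma$-equivalent) to low-regularity NSF energy estimates with Gr\"onwall, control the Fourier-localized data via a dilation/Bernstein estimate to produce the $\varepsilon^{-\alpha(m-1/2)}$ factor, and obtain the $\widetilde L^4_T$ bound by the time interpolation $\|h\|_{\widetilde L^4_T H^{m+1/2}_x}\lesssim\|h\|_{\widetilde L^\infty_T H^m_x}^{1/2}\|h\|_{L^2_T H^{m+1}_x}^{1/2}$. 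The paper's own proof is merely a citation to \cite{carrapatoso2025navier} plus the observation that $\mathbf P_0:L^2_v\to L^2_\gamma$ is bounded (your ``harmless moment constants'' remark) and the same interpolation inequality, so the two arguments coincide in substance.
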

 \begin{proof}
   We refer to Page 12 in \cite{carrapatoso2025navier} for the proof, where we have used that $\PP_0:L^2_v\to L_\gamma^2\subset L^2_v$ is bounded and hence
   \begin{equation}\label{es-kerL-L2v-H*v-ineq}
     \|\gve\|_{L_\gamma^2}=\|\PP_0\gve\|_{L_\gamma^2}\lesssim|\PP_0\gve\|_{L^2_v}.
   \end{equation}
   The estimate \eqref{ineq-L4-g-ve} is a consequence of a simple interpolation inequality, that is,
    \begin{align*}
        \|h\|_{\widetilde{L}_T^4 H_x^n} \lesssim\|h\|_{\widetilde{L}_T^{\infty} H_x^{n-\frac{1}{2}}}^{\frac{1}{2}}\|h\|_{L_T^2 H_x^{n+\frac{1}{2}}}^{\frac{1}{2}}, \; n\geqslant \frac{1}{2}.
    \end{align*}
 \end{proof}

\begin{remark}
    In fact, Lemma \ref{lem-fluid-solution} shows that the lifespan of $g^\ve$ is $[0,T^\ve]$ for some $T^\ve > 0$. However, one can show that the lifespan of $g^\ve$ is actually at least that of the limit $g$. This statement is presented at the very beginning of Section 2 in \cite{gallagher2020convergence}.
\end{remark}

\section{Sketch of the Proof of Theorem \ref{thm-main}}
The aim of this work is to rigorously justify the convergence from the quantum kinetic solution $\fve$ to the fluid solution $g$ in space $\widetilde{L}^{\infty}_TH^{\frac{1}{2}}_xL^2_v$. Since we already have \eqref{limit-smooth-appoximation}, it is enough to show
\begin{align}
 \left\|\fve-g^{\varepsilon}\right\|_{\widetilde{L}_T^{\infty} H_x^{\frac{1}{2}} L_v^2}{\longrightarrow} 0,\quad \ve\to0,\label{aim-reduce-limit}
\end{align}
for the given well-prepared initial data $g_{\mathrm{in}}$ defined by \eqref{def-initial-data}.

Now we define the difference $\delta^{\ve}$ by $\delta^{\ve}(t):= \fve(t) - g^{\ve}(t)$, where $\fve$ satisfies
\begin{align}
    \fve(t)=U^{\ve}(t)\fin^{\ve}+\mathfrak{Q}^{\ve}[f^{\ve},f^{\ve}](t)+\mathfrak{T}^{\ve}[f^{\ve},f^{\ve},f^{\ve}](t), \label{f-Duhamel-form}
\end{align}
and just like \eqref{eq-g}, $g^{\ve}(t)$ also satisfies
\begin{align}
  g^{\ve}(t)=U_{\mathrm{NSF}}(t)\PP_0f^{\ve}_{\mathrm{in}}+\mathfrak{Q}_{\mathrm{NSF}}[g^{\ve},g^{\ve}](t).\label{g-Duhamel-form}
\end{align}
Taking the difference between \eqref{f-Duhamel-form} and \eqref{g-Duhamel-form}, we get the equation for $\delta^{\ve}(t)$: %\mathfrak{Q}^{\ve}[\gve,\gve](t)
\begin{align}\label{different-equation}
\delta^{\ve}(t)=D_0^{\ve}(t)+\mathcal{S}^{\ve}(t)+\mathcal{A}^{\ve}[\delta^{\ve}](t)+\mathcal{B}^{\ve}[\delta^{\ve},\delta^{\ve}](t)+\mathfrak{T}^{\ve}[\delta^{\ve},\delta^{\ve},\delta^{\ve}](t),
\end{align}
where
\begin{align}
    \begin{aligned}
        &D_0^{\ve}(t):=(U^{\ve}(t)-U_{\mathrm{NSF}}(t))\PP_0\fin^{\ve}+U^{\ve}(t)\PP_0^{\perp}\fin^{\ve},\\
        &\mathcal{S}^{\ve}(t):=\mathfrak{Q}^{\ve}[\gve,\gve](t)-\mathfrak{Q}_{\NSF}[\gve,\gve](t)+\mathfrak{T}^{\ve}[\gve,\gve,\gve](t),\\
        &\mathcal{A}^{\ve}[\delta^{\ve}](t):=2\mathfrak{Q}^{\ve}[\delta^{\ve},\gve](t)+3\mathfrak{T}^{\ve}[\delta^{\ve},\gve,\gve](t),\\
        &\mathcal{B}^{\ve}[\delta^{\ve},\delta^{\ve}](t): =\mathfrak{Q}^{\ve}[\delta^{\ve},\delta^{\ve}](t)+3\mathfrak{T}^{\ve}[\delta^{\ve},\delta^{\ve},\gve](t).
    \end{aligned}\label{def-diffi-eq-part}
\end{align}

Then we should prove the difference equation \eqref{different-equation} has a unique solution and the solution $\delta^{\ve}\to 0$ as $\ve\to 0$. For the existence, we have the following fixed point lemma.
\begin{lemma}\label{lem-fixed-point}
    Let $\mathscr{X}$ be a Banach space, $L:\mathscr{X}\to\mathscr{X}$ be a continuous linear operator, $B:\mathscr{X}\times\mathscr{X}\to\mathscr{X}$ be a bilinear operator, and $\Psi:\mathscr{X}\times\mathscr{X}\times\mathscr{X}\to\mathscr{X}$ be a trilinear operator. Define norms of operators:
    \begin{align}
        &\|L\|:=\sup_{\|y\|=1}\|L(y)\|,\quad \|B\|:=\sup_{\|y\|=\|z\|=1}\|B(y,z)\|,\nonumber\\
       & \|\Psi\|:=\sup_{\|y\|=\|z\|=\|u\|=1}\|\Psi(y,z,u)\|.\nonumber
    \end{align}
    If $\|L\|<1/2$, then for any $y_0\in\mathscr{X}$ such that
    \begin{equation*}
    \|y_0\|\leqslant \frac{1/2-\|L\|}{4\|B\| + \sqrt{6\|\Psi\|}},
  \end{equation*}
  the equation $y=y_0+L(y)+B(y,y)+\Psi(y,y,y)$ admits a unique solution in the ball of center $0$ and radius $\frac{1-2\|L\|}{4\|B\| + \sqrt{6\|\Psi\|}}$. In addition, there holds that $\|y\|\leqslant C\|y_0\|$ for some constant $C>0$.
\end{lemma}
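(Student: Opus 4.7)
The plan is to recast the equation as a fixed-point problem for the map $\Phi(y) := y_0 + L(y) + B(y,y) + \Psi(y,y,y)$ and to invoke Banach's contraction principle on a suitable closed ball. Set $D := 4\|B\| + \sqrt{6\|\Psi\|}$ and $R := (1-2\|L\|)/D$ (if $D=0$ the statement reduces to the Neumann series for $\mathrm{Id}-L$, which converges because $\|L\|<1/2$; so assume $D>0$). The hypothesis on $y_0$ is exactly $\|y_0\|\le R/2$.

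\emph{Stability.} For $\|y\|\le R$ the triangle inequality gives $\|\Phi(y)\|\le \|y_0\| + \|L\|R + \|B\|R^2 + \|\Psi\|R^3$, so I need $\|B\|R + \|\Psi\|R^2\le \tfrac12-\|L\|$. Introducing the convenient variables $p := 4\|B\|R$ and $u := \sqrt{6\|\Psi\|}R$, which by construction satisfy $p+u = 1-2\|L\|$ with $p,u\ge 0$, this reduces algebraically to $p/4 + u^2/6 \le (p+u)/2$, i.e., to $u(u-3)\le 3p/2$, which is immediate since $u\le 1<3$. Thus $\Phi$ maps $\overline{B}_R$ into itself.

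\emph{Contraction.} Write the usual telescoping identities $B(y,y)-B(z,z)=B(y-z,y)+B(z,y-z)$ and $\Psi(y,y,y)-\Psi(z,z,z) = \Psi(y-z,y,y)+\Psi(z,y-z,y)+\Psi(z,z,y-z)$. For $y,z\in\overline{B}_R$ these yield $\|\Phi(y)-\Phi(z)\|\le (\|L\|+2\|B\|R+3\|\Psi\|R^2)\|y-z\|$. In the variables $p,u$ the bracket becomes $\|L\| + p/2 + u^2/2$, and the desired bound $<1$ is equivalent to $u^2-u-1<0$; this holds because $u\le 1-2\|L\|\le 1 < (1+\sqrt 5)/2$. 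In fact, discarding the non-positive quantity $u(u-1)/2$ gives the cleaner estimate $\|\Phi(y)-\Phi(z)\|\le \tfrac12\|y-z\|$.

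\emph{Conclusion and quantitative bound.} Banach's theorem applied to $\Phi$ on the complete metric space $\overline{B}_R$ produces the unique fixed point $y\in\overline{B}_R$. Testing the fixed-point identity gives $\|y\|\le \|y_0\| + (\|L\|+\|B\|R+\|\Psi\|R^2)\|y\|$, where the stability step already showed $\|L\|+\|B\|R+\|\Psi\|R^2\le 1/2$; hence $\|y\|\le 2\|y_0\|$, proving the final claim with $C=2$. The only subtlety — and the one that must be checked carefully — is that the particular denominator $4\|B\|+\sqrt{6\|\Psi\|}$ is chosen precisely to balance the two inequalities $u(u-3)\le 3p/2$ (stability) and $u^2-u-1<0$ (contraction) simultaneously; any looser normalisation would either spoil the self-map property or force the contraction constant above $1$.
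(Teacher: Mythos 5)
Your proof is correct and follows essentially the same contraction-mapping strategy as the paper's: verify that $\Phi$ is a self-map of $\overline{B}_R$ and a contraction there, then apply Banach's fixed-point theorem. The only differences are presentational — you work directly with the single radius $R=(1-2\|L\|)/(4\|B\|+\sqrt{6\|\Psi\|})$ whereas the paper first uses a looser radius for the self-map step, and you supply the explicit constant $C=2$ in the final bound, which the paper leaves implicit.
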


\begin{proof}
  For any fixed $y_0\in\mathscr{X}$, we define the map $G:\,\mathscr{X}\to \mathscr{X}$ as
  \begin{equation}
    G(y)=y_0+L(y)+B(y,y)+\Psi(y,y,y).\nonumber
  \end{equation}

  Let $s>0$ and $\|y_0\|\leqslant s/2$, then for any $y\in \mathscr{X}$ with $\|y\|\leqslant s$ we have
  \begin{equation*}
    \|G(y)\| \leqslant \frac{s}{2} + \|L\|s + \|B\|s^2 + \|\Psi\|s^3.
  \end{equation*}
  It is easy to see that for any $s>0$ with
  \begin{equation*}
    s \leqslant \frac{1-2\|L\|}{2\|B\| + \sqrt{2\|\Psi\|}},
  \end{equation*}
  one can get $\|G(y)\|\leqslant s$. Then $G$ maps $B_0(s)$ to $B_0(s)$ for $s$ satisfying the above condition.

  Next for any $y,\,z\in B_0(r)$ with $r\in(0, \frac{1-2\|L\|}{2\|B\| + \sqrt{2\|\Psi\|}}]$ to be determined, we have
  \begin{align*}
    G(y) - G(z) = &L(y-z) + B(y-z,y) + B(z,y-z) \\
    &+ \Psi(y-z,y,y) + \Psi(z,y-z,y) +\Psi(z,z,y-z),
  \end{align*}
  it follows that
  \begin{align*}
    \|G(y) - G(z)\| \leqslant & \Big\{\|L\| + \|B\|(\|y\| + \|z\|) +\|\Psi\|(\|y\|^2 + \|y\|\|z\| + \|z\|^2)\Big\} \|y-z\|\\
    \leqslant & (\|L\| + 2\|B\| r + 3\|\Psi\| r^2)\|y-z\|,
  \end{align*}
  Now we choose $r>0$ with
  \begin{equation*}
    r \leqslant \frac{1-2\|L\|}{4\|B\| + \sqrt{6\|\Psi\|}} < \frac{1-2\|L\|}{2\|B\| + \sqrt{2\|\Psi\|}},
  \end{equation*}
  to get $\|L\| + 2\|B\| r + 3\|\Psi\| r^2\leqslant 1/2$ and hence $G$ is a contraction map on a ball $B_0(r)$. We complete the proof by the fixed point theorem.
\end{proof}

To progress, for any interval $I\subset\mathbb{R}^+$ we define an appropriate functional space $\mathscr{X}_I^{\ve}$ as
\begin{align}
\mathscr{X}_I^{\ve}:=\left\{f(t,x,v) \mid \|f\|_{\mathscr{X}_I^{\ve}}<+\infty\right\},\label{def-space-MXe}
\end{align}
though the norm
\begin{align}
    \begin{aligned}
\|f\|_{\mathscr{X}_I^{\ve}}:= & \|f\|_{\widetilde{L}_I^{\infty} H_x^{\frac{1}{2}} L_v^2}+\left\|\mathbf{P}_0 f\right\|_{L_I^2 \dot{H}_x^{\frac{3}{2}} L_v^{2}}+\frac{1}{\sqrt{\varepsilon}}\left\|\mathbf{P}_0^{\perp} f\right\|_{L_I^2 H_x^{\frac{3}{2}} L_\gamma^{2}} \\
& +\varepsilon^\beta\left(\|f\|_{\widetilde{L}_I^{\infty} H_x^{\ell} L_v^2}+\left\|\mathbf{P}_0 f\right\|_{L_I^2 \dot{H}_x^{\ell} L_v^{ 2}}+\tfrac{1}{\sqrt{\varepsilon}}\left\|\mathbf{P}_0^{\perp} f\right\|_{L_I^2 H_x^{\ell} L_\gamma^{2}}\right).
\end{aligned}\nonumber
\end{align}
In order to apply the fixed point argument described above to prove the existence of \eqref{different-equation} in the functional space $\mathscr{X}_I^{\ve}$, it suffices to establish the following estimates, which are proved in the next section.

\begin{proposition}\label{prop-fixed-point-lem-conditions}
Recall the notations $D^{\ve}_0,\CS^{\ve},\CA^{\ve},\CB^{\ve}$ given by \eqref{def-diffi-eq-part}. Under the assumptions of Theorem \ref{thm-main}, the following estimates hold:
\begin{enumerate}
    \item For any $t\in(0,T)$ and any function $F\in \mathscr{X}_{[0, t]}^{\varepsilon}$, we have
    \begin{align*}
        \left\|U^{\varepsilon}(\cdot-t) F(t)\right\|_{\mathscr{X}_{[t, T]}^{\varepsilon}} \lesssim\|F\|_{\mathscr{X}_{[0, t]}^{\varepsilon}}.
    \end{align*}
     \item For $D_0^{\ve}(t)$, it holds that
        \begin{align}
            \|D_0^{\ve}\|_{\MXe_{\infty}}\lesssim D^{\ve}_{\mathrm{in}}\to 0,\quad\ve\to 0,\nonumber
        \end{align}
        where
        \begin{align*}
          D^{\ve}_{\mathrm{in}}:=\ve^{\frac{1}{2}-\al}\|g_{\mathrm{in}}\|_{H^{\frac{1}{2}}_xL^2_v}+\|\PP_0^{\perp}\fin^{\ve}\|_{H^{\frac{1}{2}}_xL^2_v}+\ve^{\be}\|\PP_0^{\perp}\fin^{\ve}\|_{H^{\ell}_xL^2_v}.
        \end{align*}
    \item For the source term $\CS^{\ve}(t)$, there exists a nonnegative increasing function $\Phi$ such that
    \begin{align}
       \left\|\CS^{\varepsilon}\right\|_{\mathscr{X}_T^{\varepsilon}} \lesssim \ve^{\frac{1}{2}-2\al}\Phi\big( \left\|g_{\mathrm {in }}\right\|_{H_x^{1/2} L_v^2}\big)\to 0,\quad\ve\to0 .\nonumber
    \end{align}
    \item For all intervals $I$, the linear operator $\CA^{\ve}$ satisfies that for sufficiently small $\ve>0$,
    \begin{align}
        \begin{aligned}
           \left\|\mathcal{A}^{\varepsilon}[f]\right\|_{\mathscr{X}_I^{\varepsilon}} &\lesssim\|f\|_{\mathcal{X}_I^{\varepsilon}}(\left\|g^{\varepsilon}\right\|_{\widetilde{L}_I^4 H_x^1 L_v^2}+\left\|g^{\varepsilon}\right\|_{L_I^2 H_x^{\frac{3}{2}} L_v^2}+\varepsilon^\beta\left\|g^{\varepsilon}\right\|_{\widetilde{L}_I^{\infty} H_x^{\ell} L_v^2}+\varepsilon^\beta\left\|g^{\varepsilon}\right\|_{L_I^2 H_x^{\ell} L_v^2})\\
        &\quad+ \sqrt{\ve}\|f\|_{\MXe_I}\|\gve\|_{\widetilde{L}^{\infty}_IH^{\frac{1}{2}}_xL^2_v}(\|\gve\|_{\widetilde{L}^{\infty}_IH^{\frac{5}{2}}_xL^2_v}+\|\gve\|_{L^2_IH^{\frac{7}{2}}_xL^2_v})\\
 &\quad+\sqrt{\ve}\|f\|_{\MXe_I}(\|\gve\|_{L^2_IH^{\frac{3}{2}}_xL^2_v}\|\gve\|_{\widetilde{L}^{\infty}_IH^{\frac{5}{2}}_xL^2_v}+\|\gve\|_{L^2_IH^{\ell}_xL^2_v}^2)\\
  &\quad+\sqrt{\ve}\|f\|_{\MXe_I}\|\gve\|_{\widetilde{L}^{\infty}_IH^{\ell}_xL^2_v}(\|\gve\|_{L^2_IH^{\ell+1}_xL^2_v}+\|\gve\|_{\widetilde{L}^{\infty}_IH^{\ell}_xL^2_v})\\
  &\quad+\sqrt{\ve}\|f\|_{\MXe_I}\|\gve\|_{\widetilde{L}^{\infty}_IH^{1}_xL^2_v}^2.
        \end{aligned}\label{linear-CA-estimate}
    \end{align}
   \item For all intervals $I$, the bilinear and trilinear operators are continuous:
   \begin{align}
      & \|\mathfrak{Q}^{\ve}[f_1,f_2]\|_{\MXe_I}\lesssim\|f_1\|_{\MXe_I}\|f_2\|_{\MXe_I},\label{bi-non-esti}\\
      &\|\mathfrak{T}^\ve[f_1,f_2,f_3]\|_{\MXe_I}\lesssim\|f_1\|_{\MXe_I}\|f_2\|_{\MXe_I}\|f_3\|_{\MXe_I}\label{tri-non-esti}.
   \end{align}
\end{enumerate}
\end{proposition}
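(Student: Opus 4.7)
My plan is to treat the five estimates uniformly by combining two Fourier decompositions: the high/low-frequency splitting $U^\ve = U^{\ve,\flat} + U^{\ve,\sharp}$ and the further decomposition of $U^{\ve,\flat}$ into its $U_{\NSF}$, $\widetilde U_{\NSF}^\ve$ and wave channels. On the sharp piece I will always invoke the uniform exponential decay $\|\widehat U^{\ve,\sharp}(t,\xi)\|_{L^2_v\to L^2_v}\lesssim e^{-\delta_0 t/(2\ve^2)}$ from Lemma \ref{lem-spectral-thm}, on the NSF piece the $e^{-\nu_\star|\xi|^2 t}$ bound (yielding heat-kernel maximal regularity), and on the wave piece the oscillations $e^{\pm ic|\xi|t/\ve}$ together with the same real part (yielding dispersive gains in $L^2_t$). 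The norm $\|\cdot\|_{\mathscr{X}_I^\ve}$ naturally splits into four ingredients — $\widetilde L^\infty H^{1/2}_x L^2_v$, the macroscopic $L^2_t\dot H^{3/2}_x L^2_v$ and microscopic $\ve^{-1/2}L^2_t H^{3/2}_x L^2_\gamma$ dissipations, and their $\ve^\be$-weighted $H^\ell$ analogues — and each is estimated separately. Part (1) then follows directly from this framework since $U^\ve(\tau-t)$ preserves all four ingredients.

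\textbf{Parts (2) and (3).} For (2), the microscopic piece $U^\ve(t)\PP_0^\perp f_{\mathrm{in}}^\ve$ lies in the range of $\Id - \CP$, so the decay estimate \eqref{ineq-decay} immediately controls both pieces of $D_{\mathrm{in}}^\ve$, and the $\ve^{-1/2}$ weight of the microscopic dissipation is absorbed by time-integrating $e^{-\delta_0 s/\ve^2}$. The macroscopic piece $(U^\ve - U_{\NSF})\PP_0 f_{\mathrm{in}}^\ve$ uses the frequency localization $\PP_0 f_{\mathrm{in}}^\ve = \psi(\ve^\al|D_x|)g_{\mathrm{in}}$ combined with the eigenvalue expansion $\ve^{-2}\lambda_\star(\ve\xi)=-\nu_\star|\xi|^2+O(\ve|\xi|^3)$ to produce the $\ve^{1/2-\al}$ gain, with the wave channel annihilated by $L^2_t$ dispersive cancellation. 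For (3), the key fact is that $g^\ve(t,x,\cdot)\in\Ker\CL$. The bilinear difference $(\mathfrak{Q}^\ve - \mathfrak{Q}_{\NSF})[g^\ve,g^\ve]$ splits into a sharp piece (exponential $\ve$-gain), a wave piece (dispersive $\ve$-gain in $L^2_t$), and the NSF Taylor remainder (which costs one derivative on $g^\ve$ but gains an $\ve$); the trilinear term $\mathfrak{T}^\ve[g^\ve,g^\ve,g^\ve]$ has no $\ve^{-1}$ Duhamel prefactor and is controlled directly by product laws. In every case, each additional derivative on $g^\ve$ costs $\ve^{-\al}$ through \eqref{ineq-gve-higher-bounds}, yielding the announced $\ve^{1/2-2\al}\Phi(\|g_{\mathrm{in}}\|_{H^{1/2}_xL^2_v})$.

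\textbf{Parts (4), (5), and the main obstacle.} Part (5) will follow from the 3D product law $H^{1/2}\cdot H^{3/2}\hookrightarrow H^{1/2}$ and its trilinear analogue, combined with the time Lebesgue structure of $\mathscr{X}_I^\ve$ — one factor in $\widetilde L^\infty_t H^{1/2}_x$, the other in $L^2_t H^{3/2}_x$ — and the maximal-regularity gain inherent to $U^\ve$ on Duhamel integrals. Part (4) is the main technical difficulty: each summand of $\CA^\ve[f] = 2\mathfrak{Q}^\ve[f,g^\ve] + 3\mathfrak{T}^\ve[f,g^\ve,g^\ve]$ must be paired against the mixed $\widetilde L^p_t H^s_x$ norms of $g^\ve$ supplied by \eqref{ineq-gve-higher-bounds}--\eqref{ineq-L4-g-ve}, and the $\ve^{-1}$ prefactor of $\mathfrak{Q}^\ve$ must be absorbed using the $\ve^{-1/2}$-weighted microscopic dissipation of $f$ together with the $L^2_\gamma$ coercivity of Lemma \ref{lem-linear-operator-propreties}; this is precisely what generates the $\sqrt\ve$ factors in the last four lines of \eqref{linear-CA-estimate}. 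The hardest single step will be closing the terms in which all spatial regularity sits on $g^\ve$, such as the contribution involving $\|g^\ve\|_{\widetilde L^\infty_I H^\ell_x L^2_v}\|g^\ve\|_{L^2_I H^{\ell+1}_x L^2_v}$, without overrunning the $\ve^\be$-weight in the $H^\ell$ slot of $\mathscr{X}_I^\ve$: this requires careful bookkeeping of the $\ve^{-\al}$ losses inherited from the smoothed data and of the trade between the $L^2_\gamma$ coercivity on $f$ and the $L^2_v$-only bounds on the $\Ker\CL$ part of $g^\ve$ afforded by \eqref{es-kerL-L2v-H*v-ineq}, together with a time interpolation between $\widetilde L^\infty_t H^{1/2}_x$ and $L^2_t H^{3/2}_x$ to recover the $\widetilde L^4_t H^1_x$-type factor appearing in the first line of \eqref{linear-CA-estimate}.
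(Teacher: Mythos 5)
Your overall Fourier-spectral framework is a reasonable starting point, but your plan misses the central mechanism that the paper uses to obtain the microscopic dissipation term $\frac{1}{\sqrt\ve}\|\PP_0^\perp f\|_{L^2_I H^m_x L^2_\gamma}$ in $\|\cdot\|_{\mathscr{X}^\ve_I}$, and the substitute you propose does not actually produce it. You suggest absorbing the $\ve^{-1/2}$ weight on the microscopic piece ``by time-integrating $e^{-\delta_0 s/\ve^2}$'', but the uniform decay in Lemma \ref{lem-spectral-thm}, equation \eqref{ineq-decay}, is only an $L^2_v\to L^2_v$ bound. Integrating it in time yields an $L^2_t L^2_v$ control with a factor $\ve$, not an $L^2_t L^2_\gamma$ control; since $\gamma>0$, $L^2_\gamma$ is a strictly stronger norm than $L^2_v$, so the decay alone cannot close the dissipation budget. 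The paper instead proves Lemma \ref{lem-semi-hypocoercivity} through an energy/hypocoercivity argument: it introduces the $\xi$-dependent modified inner product $\langle\langle\cdot,\cdot\rangle\rangle_{L^2_v}$ built from the correction functional $\mathscr{B}$ (cf. \eqref{def-inner-product}), derives the coupled macroscopic and microscopic differential inequalities \eqref{es-mac}--\eqref{es-mic}, and integrates them to get the $L^2_\gamma$ dissipation with the correct $\ve^{-1}$ weight directly from the coercivity $-\langle\CL g,g\rangle\gtrsim\|\PP_0^\perp g\|^2_{L^2_\gamma}$. You do mention this coercivity, but only as something applied to $f$ inside Part (4), not as the engine behind the semigroup estimate; without Lemma \ref{lem-semi-hypocoercivity} your Parts (1), (2), (4), and (5) cannot be closed. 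A second, smaller inaccuracy: in Part (2) you identify $\PP_0^\perp f^\ve_{\mathrm{in}}$ with the range of $\Id-\CP(\xi)$, but $\CP(\xi)$ projects onto the perturbed eigenspaces of $\widehat\Lambda^1(\xi)$, which coincide with $\operatorname{Ker}\CL$ only at $\xi=0$, so the microscopic part is not annihilated by $\CP$ and requires the hypocoercive estimate anyway.

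There is a second substantive omission. You reduce Parts (4)--(5) to the product law $H^{1/2}\cdot H^{3/2}\hookrightarrow H^{1/2}$ and ``its trilinear analogue''. The paper, however, needs the specific trilinear paraproduct estimate of Lemma \ref{lem-tri-estimates}, obtained by iterating twice the dichotomy $|\zeta|<|\eta-\zeta|$ versus $|\zeta|\geqslant|\eta-\zeta|$ in the Fourier convolution and applying Young in time with H\"older triples $(\alpha_j,\beta_j,\gamma_j)$ summing to $\tfrac12$; this produces four terms $J_1,\dots,J_4$ each carrying its own choice of Sobolev exponents $(r_j,\delta_j)$, and the $\ve^\beta$-weighted $H^\ell$ part of $\|\cdot\|_{\mathscr X^\ve_I}$ is calibrated against precisely these freedoms. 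The authors flag this as their key refinement over \cite{JXZ2022JDE}, and without it the bookkeeping you describe as ``the hardest single step'' in Part (4) cannot be carried out: a generic trilinear Sobolev product law gives no room to absorb the $\ve^{-\alpha}$ losses from \eqref{ineq-gve-higher-bounds} against the $\ve^\beta$ weight and the $\sqrt\ve$ gain from the Duhamel estimate.
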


With the above tools and estimates at hand, we now prove our main result.
\begin{proof}[\bf Proof of Theorem \ref{thm-main}]
Applying Lemma \ref{lem-fixed-point} to our case, we set there that
\begin{equation*}
    y=\delta^\ve \textrm{ and } y_0=D_0^{\ve}(t)+\CS^{\ve}(t).
\end{equation*}
Then from the second and third statements in Proposition \ref{prop-fixed-point-lem-conditions} we know $y_0$ is bounded and $y_0\to0$, as $\ve\to0$. The estimates in Proposition \ref{prop-fixed-point-lem-conditions}-(5) correspond to the bilinear and trilinear conditions in Lemma \ref{lem-fixed-point}. It remains to verify the contraction of the linear operator $\CA^{\ve}$. 

For the scaling parameter $\ve$, we can choose $\ve_0>0$ such that for any $\ve\leqslant\ve_0$,
\begin{align}
    \|\CA^{\ve}[f]\|_{\MXe_I}&\lesssim\Big(\ve^{\be-\al(\ell-\frac{1}{2})}+\ve^{\be-\al(\ell-\frac{3}{2})}+\ve^{\frac{1}{2}-2\al}+\ve^{\frac{1}{2}-2\al(\ell-\frac{1}{2})}+\ve^{\frac{1}{2}-\al}\Big)\|f\|_{\MXe_I}\nonumber\\
    &\quad+\Big(\left\|g\right\|_{\widetilde{L}_I^4 H_x^1 L_v^2}+\left\|g\right\|_{L_I^2 H_x^{\frac{3}{2}} L_v^2}\Big)\|f\|_{\MXe_I}\nonumber\\
    &\leqslant C\Big(\tfrac{1}{8C}+\underbrace{\left\|g\right\|_{\widetilde{L}_I^4 H_x^1 L_v^2}+\left\|g\right\|_{L_I^2 H_x^{\frac{3}{2}} L_v^2}}_{=: G}\Big)\|f\|_{\MXe_I},\label{CA-esti}
\end{align} 
where we have used the estimate \eqref{linear-CA-estimate}. It should be pointed out that the term $G$ is explicitly bounded as in Lemma \ref{lem-smooth-fluid-solution} and clearly, $G$ is not small. To obtain the contraction of $\mathcal{A}^\ve$, we estimate the term $G$ by an irrational method on time, that is to show $G\leqslant \tfrac{1}{8C}$ on each small time interval but not the whole interval $[0,T]$. 

According to Lemma \ref{lem-smooth-fluid-solution}, there exists $t_1$ with $0<t_1<T$ such that
\begin{align}
  \left\|g\right\|_{\widetilde{L}^4([0,t_{1}];H_x^1 L_v^2)}+\left\|g\right\|_{L^2([0,t_{1}]; H_x^{\frac{3}{2}} L_v^2)}\lesssim\tfrac{1}{8C}.\label{G-esti}
\end{align}
%According to Lemma \ref{lem-smooth-fluid-solution}, there exists $N>0$ and times $0=:t_1<t_2<\dots<t_N:=T$ such that for any $0\leqslant i\leqslant N$, it holds that
%\begin{align}
 % \left\|g\right\|_{\widetilde{L}^4([t_i,t_{i+1}];H_x^1 L_v^2)}+\left\|g\right\|_{L^2([t_i,t_{i+1}]; H_x^{\frac{3}{2}} L_v^2)}\lesssim\tfrac{1}{8C}.\label{G-esti}
%\end{align}
 Inserting \eqref{G-esti} into \eqref{CA-esti} for $t\in[0,t_1]$, we obtain
 \begin{align}
    \|\CA^{\ve}[f]\|_{\MXe_{t_1}}\lesssim\tfrac{1}{4}\|f\|_{\MXe_{t_1}}.
 \end{align}
Then by the fixed-point Lemma \ref{lem-fixed-point}, there exists a unique solution $\delta^{\ve}$ to equation
\eqref{different-equation} on $[0,t_1]$ satisfying
\begin{align}
    \|\delta^{\ve}\|_{\MXe_{t_1}}\lesssim\|D_0^{\ve}\|_{\MXe_{t_1}}+\|\CS^{\ve}\|_{\MXe_{t_1}}\to 0,\quad\ve\to0.\label{delta-t2}
\end{align}

We now proceed to establish the irrational relationship. In view of the nonlinear terms in \eqref{different-equation}, by the property of semi-group, then for any $t\geqslant t_1$ we can rewrite them as
\begin{align}
\begin{aligned}
      &\int_0^t U^{\ve}(t-s)\CQ_{\mathrm{sym}}(f_1,f_2)(s)\ud s\\
      =&\int_0^{t_1} U^{\ve}(t-s)\CQ_{\mathrm{sym}}(f_1,f_2)(s)\ud s+\int_{t_1}^t U^{\ve}(t-s)\CQ_{\mathrm{sym}}(f_1,f_2)(s)\ud s\\
   =&U^{\ve}(t-t_1)\int_0^{t_1} U^{\ve}(t_1-s)\CQ_{\mathrm{sym}}(f_1,f_2)(s)\ud s+\underbrace{\int_{t_1}^t U^{\ve}(t-s)\CQ_{\mathrm{sym}}(f_1,f_2)(s)\ud s}_{=:\mathfrak{Q}^{\ve}[f_1,f_2](t_1;t)}.
\end{aligned}
\end{align}
Similarly,
\begin{align}
\begin{aligned}
  &\int_0^t U^{\ve}(t-s)\CT_{\mathrm{sym}}(f_1,f_2,f_3)(s)\ud s\\
  =&U^{\ve}(t-t_1)\int_0^{t_1} U^{\ve}(t_1-s)\CT_{\mathrm{sym}}(f_1,f_2,f_3)(s)\ud s+\underbrace{\int_{t_1}^t U^{\ve}(t-s)\CT_{\mathrm{sym}}(f_1,f_2,f_3)(s)\ud s}_{=:\mathfrak{T}^{\ve}[f_1,f_2,f_3](t_1;t)}.
\end{aligned}\nonumber
\end{align}
With this notation, the equation \eqref{different-equation} can be written as
\begin{align*}
    \delta^{\ve}(t)=&U^{\ve}(t-t_1)\delta^{\ve}(t_1)+D_{02}^{\ve}(t)+\CS^{\ve}_2(t)+\CA^{\ve}[\delta^{\ve}](t_1;t)\\
    &+\CB^{\ve}[\delta^{\ve},\delta^{\ve}](t_1;t)+\mathfrak{T}^{\ve}[\delta^{\ve},\delta^{\ve},\delta^{\ve}](t_1;t),
\end{align*}
where
\begin{align*}
 D_{02}^{\ve}(t)&:=D_{0}^{\ve}(t)-U^{\ve}(t-t_1)D_{0}^{\ve}(t_1),\\
 \CS^{\ve}_2(t)&:=\CS^{\ve}(t)-U^{\ve}(t-t_1)\CS^{\ve}(t_1).
\end{align*}
Again by Lemma \ref{lem-smooth-fluid-solution}, we can choose $t_2\in(t_1,T)$ such that
\begin{align*}
  \left\|g\right\|_{\widetilde{L}^4([t_{1},t_2];H_x^1 L_v^2)}+\left\|g\right\|_{L^2([t_{1},t_2]; H_x^{\frac{3}{2}} L_v^2)}\lesssim\tfrac{1}{8C}.
\end{align*}
Then Proposition \ref{prop-fixed-point-lem-conditions} (1)-(3) and \eqref{delta-t2} imply
\begin{align}
\begin{aligned}
    & \|U^{\ve}(\cdot-t_1)\delta^{\ve}(t_1)+D_{02}^{\ve}(\cdot)+\CS^{\ve}_2(\cdot)\|_{\MXe_{[t_1,t_2]}}\\
    \lesssim&\|\delta^{\ve}\|_{\MXe_{t_1}}+\|D^{\ve}_0\|_{\MXe_{t_1}}+\|\CS^{\ve}\|_{\MXe_{t_1}}\\
  \lesssim&\|D^{\ve}_0\|_{\MXe_{t_1}}+\|\CS^{\ve}\|_{\MXe_{t_1}}\\
  \leqslant& C_1\Big(D_{\mathrm{in}}^{\ve}+\ve^{\frac{1}{2}-2\al}\Phi\big( \left\|g_{\mathrm {in }}\right\|_{H_x^{1/2} L_v^2}\big)\Big)\to0,\quad\ve\to 0,
\end{aligned}\label{delta-t2-esti}
  \end{align}
where constant  $C_1>0$. By Proposition \ref{prop-fixed-point-lem-conditions}-(4), we have
\begin{align}
    \|\CA^\ve[\delta^{\ve}]\|_{\MXe_{[t_1,t_2]}}\leqslant\frac{1}{4}\|\delta^{\ve}\|_{\MXe_{[t_1,t_2]}}.\nonumber
\end{align}
From Lemma \ref{lem-fixed-point} and \eqref{delta-t2-esti}, there exists a positive constant $C_2$ such that
\begin{align}
\begin{aligned}
    \|\delta^{\ve}\|_{\MXe_{[t_1,t_2]}}&\leqslant C_2\|U^{\ve}(\cdot-t_1)\delta^{\ve}(t_1)+D_{02}^{\ve}(\cdot)+\CS^{\ve}_2(\cdot)\|_{\MXe_{[t_1,t_2]}}\\
    &\leqslant C_2C_1\Big(D_{\mathrm{in}}^{\ve}+\ve^{\frac{1}{2}-2\al}\Phi\big( \left\|g_{\mathrm {in }}\right\|_{H_x^{1/2} L_v^2}\big)\Big)\to0,\quad\ve\to 0.
\end{aligned}\nonumber
\end{align}

We can repeat the process above $N$ times for some $N\in\mathbb{Z}^+$ such that $0:=t_0<t_1<\cdots<t_N:= T$ with
\begin{align*}
  \left\|g\right\|_{\widetilde{L}^4([t_{N-1},t_{N}];H_x^1 L_v^2)}+\left\|g\right\|_{L^2([t_{N-1},t_N]; H_x^{\frac{3}{2}} L_v^2)}\lesssim\tfrac{1}{8C},
\end{align*}
and
\begin{equation*}
    \|\CA^\ve[\delta^{\ve}]\|_{\MXe_{[t_i,t_{i+1}]}}\leqslant\frac{1}{4}\|\delta^{\ve}\|_{\MXe_{[t_i,t_{i+1}]}},\;i = 0,1,2,\cdots,N-1.
\end{equation*}
In summary, there exists a unique solution $\delta^{\ve}(t)$ to the difference equation \eqref{different-equation} and a positive constant $C(N):=\prod_{i=1}^NC_i$ such that 
\begin{align*}
    \|\delta^{\ve}\|_{\MXe_T}\leqslant C(N)\Big(\|D_0^{\ve}\|_{\MXe_T}+\|\CS^{\ve}\|_{\MXe_T}\Big)\to 0,\quad\ve\to0.
\end{align*}
The convergence follows from Proposition \ref{prop-fixed-point-lem-conditions} (2)-(3) and thus the proof of Theorem \ref{thm-main} is complete.
\end{proof}
\begin{remark}
    Note that in \eqref{CA-esti}, $G$ is not small enough to guarantee the contraction of operator $\mathcal{A}^\ve$ on $\mathcal{X}_I^\ve$, unless the initial data $g_{\mathrm{in}}$ is sufficiently small. By dividing the interval $I = [0,T]$ into finite segments, one can get around this difficulty if there is no smallness assumption on $g_{\mathrm{in}}$. This idea comes from Carrapatoso-Gallagher-Tristani \cite{carrapatoso2025navier}.
\end{remark}

\section{Proof of Proposition \ref{prop-fixed-point-lem-conditions}}
In this section, we prove Proposition \ref{prop-fixed-point-lem-conditions}. The key ingredient in the proof is the nonlinear estimates for bilinear operator $\mathfrak{Q}^{\ve}$. In subsections \ref{subsec-es-QT}-\ref{subsec-Rfdes-QT}, we give the estimates for bilinear operator $\CQ$ and trilinear operator $\CT$. These estimates will lead to the continuity estimates of semigroup $U^\ve$ and convergence of $D_0^{\ve}$, that is, Proposition \ref{prop-fixed-point-lem-conditions}-(1) and (2). In subsection \ref{subsec-conv-S}, we obtain the convergence of $\CS^{\ve}$ and hence prove Proposition \ref{prop-fixed-point-lem-conditions}-(3). In the last three subsections, we show that $\CA^{\ve}$ and $\CB^{\ve}$ are continuous on functional space $\MXe_I$, and then Proposition \ref{prop-fixed-point-lem-conditions}-(4) and (5) are proved.

\subsection{\texorpdfstring{Estimates for $\mathcal{Q}$ and $\mathcal{T}$}{Estimates for Q and T}}\label{subsec-es-QT}
We now give a class of useful estimates for the bilinear terms $\CQ$ and the trilinear terms $\CT$. Firstly, we invoke the $L_v^2$ estimates, which were proved in \cite{JXZ2022JDE,JiangZhou2024Global}.

\begin{lemma}
  For $f_1,\,f_2,\,f_3$ and $h$ in $D(\CL)$, we have
  \begin{align*}
        |\langle\CQ(f_1,f_2),h\rangle_{L^2_v}|\lesssim(\|f_1\|_{L^2_v}\|f_2\|_{L_\gamma^2}+\|f_2\|_{L^2_v}\|f_1\|_{L_\gamma^2})\|h\|_{L_\gamma^2},
    \end{align*}
    and
    \begin{align*}
        |\langle\CT(f_1,f_2,f_3),h\rangle_{L^2_v}|\lesssim(\|f_1\|_{L^2_v}\|f_2\|_{L_\gamma^2}+\|f_2\|_{L^2_v}\|f_1\|_{L_\gamma^2})\|f_3\|_{L^2_v} \|h\|_{L_\gamma^2}.
    \end{align*}
    As a result,
    \begin{equation}\label{es-Q}
      \begin{split}
      \|\CQ(f_1,f_2)\|_{(L_\gamma^2)^{\prime}}:&=\sup_{\|\phi\|_{L_\gamma^2}\leqslant1}|\langle\CQ(f_1,f_2),\phi\rangle_{L^2_v}|\\
      &\lesssim \|f_1\|_{L^2_v}\|f_2\|_{L_\gamma^2}+\|f_2\|_{L^2_v}\|f_1\|_{L_\gamma^2},
      \end{split}
    \end{equation}
    and
    \begin{equation}\label{es-T}
      \begin{split}
      \|\CT(f_1,f_2,f_3)\|_{(L_\gamma^2)^{\prime}}&:=\sup_{\|\phi\|_{L_\gamma^2}\leqslant1}|\langle\CT(f_1,f_2,f_3),\phi\rangle_{L^2_v}|\\
      &\lesssim(\|f_1\|_{L^2_v}\|f_2\|_{L_\gamma^2}+\|f_2\|_{L^2_v}\|f_1\|_{L_\gamma^2})\|f_3\|_{L^2_v}.
      \end{split}
    \end{equation}
\end{lemma}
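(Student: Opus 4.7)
The plan is to prove both estimates by decomposing $\mathcal{Q} = \sum_{i=1}^{6}\mathcal{Q}_i$ and $\mathcal{T} = \sum_{j=1}^{4}\mathcal{T}_j$ and bounding each piece separately, after which the dual bounds \eqref{es-Q} and \eqref{es-T} follow by taking the supremum over test functions. For each $\mathcal{Q}_i(f_1,f_2)$, the pairing $\langle \mathcal{Q}_i(f_1,f_2),h\rangle_{L^2_v}$ is a triple integral over $(v,v_*,\sigma)\in \mathbb{R}^3\times\mathbb{R}^3\times\mathbb{S}^2$, whose integrand is a product of the collision kernel $B/\sqrt{\mu_{\mathrm{q}}(v)}$, a Pauli factor in $\{(1-\mu-\mu_*),(\mu_*-\mu'),\ldots\}$, two weighted factors $(\sqrt{\mu_{\mathrm{q}}}f_k)(V_k)$ with $V_k\in\{v,v_*,v',v'_*\}$, and the test function $h(v)$. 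Using the pointwise bound $|1-\mu-\mu_*|\leqslant 1$ (and analogously for the other Pauli factors, since $\mu\in[0,1]$), one absorbs those factors into a constant.

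The first substantive step is to use the pre--post collisional change of variables $(v,v_*,\sigma)\leftrightarrow(v',v'_*,\sigma')$ to move any primed arguments of $f_1,f_2$ into unprimed arguments, at the cost of putting $h$ at a primed velocity and turning $1/\sqrt{\mu_{\mathrm{q}}(v)}$ into $1/\sqrt{\mu_{\mathrm{q}}(v')}$. Because the mass $B$ is invariant under this map and the Jacobian is unity, the transformed integrand can be rewritten in terms of $\omega_\mu/\sqrt{\mu_{\mathrm{q}}(v')} = \sqrt{\mu_{\mathrm{q}}(v)\mu_{\mathrm{q}}(v_*)\mu_{\mathrm{q}}(v'_*)}$, which together with the weights already present provides Gaussian decay in every remaining velocity variable. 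The next step is Cauchy--Schwarz in $(v_*,\sigma)$ combined with the angular cutoff $\int_{\mathbb{S}^2} b(\cos\theta)\,\mathrm{d}\sigma\lesssim 1$ (valid for hard sphere and hard potentials) and the elementary inequality $|v-v_*|^\gamma \lesssim \langle v\rangle^\gamma + \langle v_*\rangle^\gamma$ for $\gamma\in[0,1]$. The $\langle v_*\rangle^\gamma$ contribution is absorbed by the Gaussian weight $\sqrt{\mu_{\mathrm{q}}(v_*)}$, while the $\langle v\rangle^\gamma$ contribution distributes as $\langle v\rangle^{\gamma/2}$ onto each of the two remaining $v$-integrations, placing one of $f_2$ and $h$ into $L^2_\gamma$ and leaving the other factor in $L^2_v$. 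Undoing the change of variables where needed produces the asymmetric bound $|\langle\mathcal{Q}_i(f_1,f_2),h\rangle_{L^2_v}|\lesssim \|f_1\|_{L^2_v}\|f_2\|_{L^2_\gamma}\|h\|_{L^2_\gamma}$, and interchanging the roles of $f_1$ and $f_2$ yields the claimed symmetric form.

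The trilinear estimate follows along the same lines: each $\mathcal{T}_j(f_1,f_2,f_3)$ contains an additional factor $(\sqrt{\mu_{\mathrm{q}}}f_3)(V_3)$, and since $\sqrt{\mu_{\mathrm{q}}(V_3)}$ carries Gaussian decay, one can pull out $\|f_3\|_{L^2_v}$ (without weight) after an $L^2$-Cauchy--Schwarz on $V_3\in\{v,v_*,v',v'_*\}$, while the remaining two factors and $h$ are distributed into $L^2_v \times L^2_\gamma \times L^2_\gamma$ exactly as before. The dual inequalities \eqref{es-Q} and \eqref{es-T} are then immediate. The main obstacle is combinatorial bookkeeping: each $\mathcal{Q}_i$ and $\mathcal{T}_j$ has a different placement of primed versus unprimed arguments and a different Pauli factor, so one must individually decide, for each piece, which change of variables to apply and which two factors to Cauchy--Schwarz against to end up with the same universal bound. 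The detailed-balance identity $\omega_\mu^2 = \mu_{\mathrm{q}}\mu_{\mathrm{q}*}\mu'_{\mathrm{q}}\mu'_{\mathrm{q}*}$ is the structural reason this bookkeeping always closes.
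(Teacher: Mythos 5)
The paper itself does not prove this lemma; it cites \cite{JXZ2022JDE,JiangZhou2024Global}, so there is no in-paper argument to compare against. On its own merits your proposal has the right general architecture (decompose $\CQ=\sum_i\CQ_i$, $\CT=\sum_j\CT_j$, exploit the collisional symmetries, distribute the weight $|v-v_*|^\gamma$, Cauchy--Schwarz with the angular cutoff), but there is a concrete gap at the very first step.

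You propose to ``absorb the Pauli factors into a constant'' by $|1-\mu-\mu_*|\leqslant 1$ and similarly for the other factors. This is harmless for $q_1,q_2$, whose Pauli factors are $(1-\mu-\mu_*)$ and $(1-\mu'-\mu'_*)$, and for the purely trilinear pieces $t_j$ (which carry three $\sqrt{\muq}$ weights and no Pauli factor at all). But for the ``mixed'' pieces $q_3,\dots,q_6$ the Pauli factors $(\mu_*-\mu')$, $(\mu'_*-\mu)$, $(\mu-\mu')$, $(\mu_*-\mu'_*)$ carry Gaussian decay that is indispensable; bounding them by $1$ destroys the integrability of the collision integral. Take $q_6=(\sqrt{\muq}f)'(\sqrt{\muq}g)(\mu_*-\mu'_*)$: the factor $1/\sqrt{\muq(v)}$ from the definition of $\CQ_6$ cancels against $(\sqrt{\muq}g)(v)$, leaving the weight $\sqrt{\muq(v')}$, together with $B=|v-v_*|^\gamma b(\cos\theta)$ and the three functions $f(v'),g(v),h(v)$. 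This weight gives no decay in $v_*$: for $\sigma$ within an angle $O(1/|v_*|)$ of $(v-v_*)/|v-v_*|$ one has $v'$ near $v$, so $\sqrt{\muq(v')}$ is bounded below, $|v-v_*|^\gamma\sim|v_*|^\gamma$ grows, and the resulting $v_*$-integral diverges. What saves the estimate is precisely that $|\mu_*-\mu'_*|\lesssim\sqrt{\muq(v_*)}+\sqrt{\muq(v'_*)}$ and that this factor vanishes exactly in the dangerous regime $\theta\to 0$ (where $v'_*\to v_*$). The same phenomenon occurs for $q_3$ and $q_5$; in the latter case the bare weight $\sqrt{\muq(v'_*)\muq(v_*)}/\sqrt{\muq(v)}$ can even grow like $e^{c|v|^2}$ for fixed $v_*$, and no pre--post change of variables eliminates this without invoking the decay of $(\mu-\mu')$. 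Your change of variables does rescue $q_4$, but that is the exception, not the rule.

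Two smaller inaccuracies are worth flagging. First, your stated change of variables ``moves the primed arguments of $f_1,f_2$ into unprimed arguments''; for the mixed pieces one of $f_1,f_2$ sits at an unprimed velocity, so after the swap it becomes primed --- you cannot have both unprimed simultaneously. Second, the transformed weight is $\sqrt{\muq(v)\muq(v_*)}/\sqrt{\muq(v')}$, which is $\lesssim\sqrt{\muq(v'_*)}$ by comparing the near-Gaussian factors, but it is \emph{not} equal to $\omega_\mu/\sqrt{\muq(v')}=\sqrt{\muq(v)\muq(v_*)\muq(v'_*)}$ as written; the detailed-balance identity $\omega_\mu^2=\muq\muq{}_*\muq'\muq'_*$ is the structural reason the comparison works, not an identity for the integrand itself. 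The fix is to split $|\mu_*-\mu'|\lesssim\sqrt{\muq(v_*)}+\sqrt{\muq(v')}$ (and analogously for the other differences) \emph{before} discarding the Pauli factors, and then run the Grad-type Cauchy--Schwarz on each resulting piece; with that modification the rest of your outline goes through.
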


In the following, we define for any $a\in\mathbb{R}$,
\begin{equation}\label{def-max}
  (a)_+:=\max\{a,0\}.
\end{equation}
Based on the above lemma, we have the following estimates for $\mathcal{Q}$ and $\mathcal{T}$ in functional spaces $L_T^2 H_x^m\left(L_\gamma^2\right)^{\prime}$ and $\widetilde{L}^{\infty}_T H^m_x L^2_v$.
\begin{lemma}\label{lem-bilinear-ope}
   Let $m\geqslant0$. For any $r_1, r_2 \neq\frac{3}{2}$, it holds that
   \begin{enumerate}
       \item  For any $p_1, q_1, p_2, q_2 \in[1, \infty]$ such that $\frac{1}{p_1}+\frac{1}{q_1}=\frac{1}{p_2}+\frac{1}{q_2}=\frac{1}{2}$, we have
       \begin{equation}\label{es-Q-f1f2}
         \begin{split}
            &\left\|\CQ\left(f_1, f_2\right)\right\|_{L_T^2 H_x^m\left(L_\gamma^2\right)^{\prime}} \\
            \lesssim& \left\|f_1\right\|_{\widetilde{L}_T^{p_1} H_x^{m+\left(\frac{3}{2}-r_1\right)_{+}} L_v^2}\left\|f_2\right\|_{\widetilde{L}_T^{q_1} H_x^{r_1} L_\gamma^2} +\left\|f_1\right\|_{\widetilde{L}_T^{p_2} H_x^{r_2} L_v^2}\left\|f_2\right\|_{\widetilde{L}_T^{q_2} H_x^{m+\left(\frac{3}{2}-r_2\right)_{+}} L_\gamma^2}.
         \end{split}
       \end{equation}
\item  In particular,
\begin{equation}\label{es-Q-Pf1f2-1}
  \begin{split}
     \left\| \CQ(\mathbf{P}_0 f_1, \mathbf{P}_0 f_2) \right\|_{\widetilde{L}^{\infty}_T H^m_x L^2_v}
     &\lesssim \left\| \mathbf{P}_0 f_1 \right\|_{\widetilde{L}^{\infty}_T H^{r_1}_x L^2_v} \left\| \mathbf{P}_0 f_2 \right\|_{\widetilde{L}^{\infty}_T H^{m + (\frac{3}{2} - r_1)_+}_x L^2_v} \\
     &\quad + \left\| \mathbf{P}_0 f_1 \right\|_{\widetilde{L}^{\infty}_T H^{r_2}_x L^2_v}\left\| \mathbf{P}_0 f_2 \right\|_{\widetilde{L}^{\infty}_T H^{m + (\frac{3}{2} - r_2)_+}_x L^2_v}.
  \end{split}
\end{equation}
\item In addition, for any $a_1, b_1, a_2, b_2 \in [1, \infty]$ such that
\begin{align}
\tfrac{1}{a_1} + \tfrac{1}{b_1} = \tfrac{1}{a_2} + \tfrac{1}{b_2} = \tfrac{1}{4},\nonumber
\end{align}
there holds
\begin{equation}\label{es-Q-Pf1f2-2}
  \begin{split}
     \left\| \CQ(\mathbf{P}_0 f_1, \mathbf{P}_0 f_2) \right\|_{\widetilde{L}^{4}_T H^m_x L^2_v}
&\lesssim \left\| \mathbf{P}_0 f_1 \right\|_{\widetilde{L}^{a_1}_T H^{m + (\frac{3}{2} - r_1)_+}_x L^2_v} \left\| \mathbf{P}_0 f_2 \right\|_{\widetilde{L}^{b_1}_T H^{r_1}_x L^2_v} \\
&\quad +\left\| \mathbf{P}_0 f_1 \right\|_{\widetilde{L}^{a_2}_T H^{r_2}_x L^2_v} \left\| \mathbf{P}_0 f_2 \right\|_{\widetilde{L}^{b_2}_T H^{m + (\frac{3}{2} - r_2)_+}_x L^2_v}.
  \end{split}
\end{equation}
   \end{enumerate}
\end{lemma}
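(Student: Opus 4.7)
The plan is to reduce the claimed mixed-norm estimates to weighted convolution bounds in the spatial frequency variable, starting from the pointwise dual-pairing estimate \eqref{es-Q}. First I would take the Fourier transform in $x$: since $\mathcal{Q}$ is local in $(t,x)$ and bilinear in $v$, this gives
\begin{equation*}
\widehat{\mathcal{Q}(f_1,f_2)}(t,\xi,v) = \int_{\mathbb{R}^3} \mathcal{Q}\bigl(\widehat{f_1}(t,\xi-\xi',\cdot),\,\widehat{f_2}(t,\xi',\cdot)\bigr)(v)\,d\xi'.
\end{equation*}
Applying \eqref{es-Q} inside the integral and setting $\mathcal{F}_i(t,\xi):=\|\widehat{f_i}(t,\xi,\cdot)\|_{L^2_v}$, $\mathcal{G}_i(t,\xi):=\|\widehat{f_i}(t,\xi,\cdot)\|_{L^2_\gamma}$ produces the pointwise-in-$(t,\xi)$ envelope bound
\begin{equation*}
\|\widehat{\mathcal{Q}(f_1,f_2)}(t,\xi,\cdot)\|_{(L^2_\gamma)'} \lesssim (\mathcal{F}_1 \ast \mathcal{G}_2)(t,\xi) + (\mathcal{F}_2 \ast \mathcal{G}_1)(t,\xi),
\end{equation*}
the convolution being in the $\xi$ variable only.

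Next I would distribute the Sobolev weight by $\langle\xi\rangle^m \lesssim \langle\xi-\xi'\rangle^m + \langle\xi'\rangle^m$ inside each convolution integral, splitting the right-hand side into pieces of the form $(\langle\cdot\rangle^m \mathcal{F}_i)\ast \mathcal{G}_j$ and $\mathcal{F}_i\ast(\langle\cdot\rangle^m \mathcal{G}_j)$. Applying Young's inequality $L^2\ast L^1 \hookrightarrow L^2$ in $\xi$, followed by Cauchy--Schwarz to convert the $L^1_\xi$ factor into a weighted $L^2_\xi$ norm, each piece yields a product of spatial Sobolev norms. The choice of weight is dictated by whether $r_i$ lies above or below the critical index $3/2$: when $r_i > 3/2$, Cauchy--Schwarz against $\langle\xi\rangle^{-r_i}$ is already integrable and leaves no extra derivatives on the other factor, giving $\|\mathcal{G}_j\|_{L^1_\xi} \lesssim \|f_j\|_{H^{r_i}_x L^2_\gamma}$; when $r_i < 3/2$, the missing $3/2-r_i$ derivatives are transferred onto the conjugate factor by writing $\langle\xi-\xi'\rangle^m = \langle\xi-\xi'\rangle^{m+(3/2-r_i)_+}\cdot\langle\xi-\xi'\rangle^{-(3/2-r_i)_+}$ before Cauchy--Schwarz. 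In both regimes this reproduces the exponents $m+(3/2-r_i)_+$ and $r_i$ that appear on the right-hand side of \eqref{es-Q-f1f2}.

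Finally, I would integrate in time: the $L^2_T$-norm in \eqref{es-Q-f1f2} comes from Minkowski followed by H\"older in $t$ with $1/p_1+1/q_1=1/2$, turning the product of time-dependent convolution factors into $\|f_1\|_{\widetilde{L}^{p_1}_T H^{\cdot}_x L^2_v}\|f_2\|_{\widetilde{L}^{q_1}_T H^{\cdot}_x L^2_\gamma}$. The $\widetilde{L}^\infty_T$-estimate \eqref{es-Q-Pf1f2-1} is obtained by first taking $L^\infty_T$ in each Fourier mode and using $L^\infty_T(ab)\leqslant L^\infty_T(a) L^\infty_T(b)$, while \eqref{es-Q-Pf1f2-2} follows verbatim with H\"older exponents summing to $1/4$. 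For these last two statements the projector $\mathbf{P}_0$ allows the further simplification $\|\mathbf{P}_0 f(x,\cdot)\|_{L^2_\gamma} \sim \|\mathbf{P}_0 f(x,\cdot)\|_{L^2_v}$, since $\mathbf{P}_0 f$ is a polynomial in $v$ times $\sqrt{\mu_{\mathrm{q}}}$, so that both $v$-norms on the right-hand side collapse to $L^2_v$.

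The main obstacle is the careful management of the product estimate near the critical Sobolev index $3/2$ in $\mathbb{R}^3_x$, where Young's inequality $L^2\ast L^1\hookrightarrow L^2$ is tight: the explicit hypothesis $r_i\neq 3/2$ together with the $(3/2-r_i)_+$ correction is exactly what ensures integrability of $\langle\xi'\rangle^{-2r_i}$ at infinity or lets the missing regularity be absorbed by the conjugate factor. A Littlewood--Paley paraproduct decomposition would give a cleaner treatment, but the direct weight-splitting above is sufficient for the statement as formulated.
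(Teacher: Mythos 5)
Your high-level plan (pass to the Fourier side, reduce to a weighted convolution estimate via \eqref{es-Q}, distribute the Sobolev weight, then integrate in time by Minkowski and H\"older) matches the paper's skeleton, and the time-integration part, the $\mathbf{P}_0$ simplification $L^2_\gamma\sim L^2_v$, and the $r_i>\frac{3}{2}$ case are all fine. However, your mechanism for the case $r_i<\frac{3}{2}$ does not work as written, and this is precisely the case that gives the exponent $m+(\frac{3}{2}-r_i)_+$ its content. The ``transfer of derivatives'' step you propose, namely writing $\langle\xi-\xi'\rangle^m=\langle\xi-\xi'\rangle^{m+(3/2-r_1)_+}\langle\xi-\xi'\rangle^{-(3/2-r_1)_+}$ and then applying Cauchy--Schwarz to the piece $\int\langle\xi-\xi'\rangle^m F_1(\xi-\xi')G_2(\xi')\,d\xi'$, leads after integrating in $\xi$ and changing variables to a factor
\begin{equation*}
\int_{\mathbb{R}^3}\langle\eta\rangle^{-2(3/2-r_1)_+}\,d\eta = \int_{\mathbb{R}^3}\langle\eta\rangle^{-(3-2r_1)}\,d\eta,
\end{equation*}
which diverges for every $r_1<\frac{3}{2}$ since the exponent is strictly below $3$. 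The same divergence appears if one instead keeps the Young $L^2\ast L^1$ step and tries Cauchy--Schwarz on the $L^1$ factor: $\|\langle\cdot\rangle^{-r_1}\|_{L^2(\mathbb{R}^3)}=\infty$ for $r_1<\frac{3}{2}$, and after Young's has separated the two factors there is no longer any conjugate term onto which excess regularity can be moved.

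What is actually needed, and what the paper uses (its proof defers to the argument for Lemma~\ref{lem-tri-estimates}, which in turn invokes Lemma~7.3 of Lemari\'e-Rieusset), is the high--low paraproduct split of the convolution:
\begin{equation*}
\int F_1(\xi-\xi')G_2(\xi')\,d\xi' \lesssim \int_{|\xi'|<|\xi-\xi'|} F_1(\xi-\xi')G_2(\xi')\,d\xi' + \int_{|\xi'|<|\xi-\xi'|} F_1(\xi')G_2(\xi-\xi')\,d\xi'.
\end{equation*}
On the first piece the restriction $|\xi'|<|\xi-\xi'|$ forces $|\xi|\lesssim|\xi-\xi'|$, so the full weight $\langle\xi\rangle^m$ lands on the high-frequency factor $F_1$, and after Cauchy--Schwarz with weight $\langle\xi'\rangle^{-r_1}$ and a change of variables one is left with $\int_{|\eta|<|\zeta|}\langle\eta\rangle^{-2r_1}\,d\eta\lesssim\langle\zeta\rangle^{(3-2r_1)_+}$, which is finite precisely because the domain is truncated at $|\zeta|$; this is the origin of the correction exponent $(\frac{3}{2}-r_1)_+$. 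Your crude triangle-inequality splitting $\langle\xi\rangle^m\lesssim\langle\xi-\xi'\rangle^m+\langle\xi'\rangle^m$ has no such truncation and cannot produce this. Consequently your closing remark that a Littlewood--Paley decomposition ``would be cleaner but the direct weight-splitting above is sufficient'' has it backwards: the high--low decomposition (or an equivalent device, e.g. Sobolev embedding plus H\"older on the physical side, which is really the same estimate in disguise) is essential for $r_i<\frac{3}{2}$, not merely cosmetic.
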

\begin{proof}
    Since we have \eqref{es-Q}, the inequality \eqref{es-Q-f1f2} is the integral version (in $x$) of the conclusion of Proposition 4.7 in \cite{carrapatoso2025navier} and the inequality \eqref{es-Q-Pf1f2-1} and \eqref{es-Q-Pf1f2-2} are the integral versions (in $x$) of the conclusion of Proposition 4.8 in \cite{carrapatoso2025navier}. The details of proof are similar to those of next lemma.
\end{proof}
Now we treat the trilinear term $\CT$. We give a similar estimate for $\CT$ to those in \eqref{es-Q-f1f2}.

\begin{lemma}\label{lem-tri-estimates}
 Assume that  $m\geqslant0$ and the functions $f_1\,,f_2\,,f_3$ are sufficiently smooth. Define
 \begin{align}
 \begin{aligned}
  &m_1:=m+(\tfrac{3}{2}-r_{1})_+,\\
        &m_{2}:=m+(\tfrac{3}{2}-r_{2})_++(\tfrac{3}{2}-\delta_{2})_+,\\
        &m_3:=m+(\tfrac{3}{2}-r_{3})_+,\\
         &m_{4}:=m+(\tfrac{3}{2}-r_{4})_++(\tfrac{3}{2}-\delta_{4})_+.
 \end{aligned}
        \end{align}
 Suppose that $r_i,\delta_i\neq\frac{3}{2},\,i=1,\cdots,4$,  for any $\al_j,\be_j,\ga_j \in[1, \infty],\,j=1,\cdots,4$, such that
     \begin{align*}
         &\tfrac{1}{\alpha_j}+\tfrac{1}{\beta_j}+\tfrac{1}{\gamma_j}=\tfrac{1}{2},\quad j=1,\cdots,4.
     \end{align*}
     Then
     \begin{align}
     \begin{aligned}
      &\left\|\CT\left(f_1, f_2,f_3\right)\right\|_{L_T^2 H_x^m\left(L_\gamma^2\right)^{\prime}} \\
      \lesssim& \|f_1\|_{\widetilde{L}^{\alpha_1}_TH^{m_1}_xL^2_v}\|f_2\|_{\widetilde{L}^{\beta_1}_TH^{r_1+(\frac{3}{2}-\delta_1)_+}_xL^2_v} \|f_3\|_{\widetilde{L}^{\gamma_1}_TH^{\delta_1}_x L_\gamma^2} \\
&\quad+\|f_1\|_{\widetilde{L}^{\alpha_2}_TH^{r_2}_xL^2_v}\|f_2\|_{\widetilde{L}^{\beta_2}_TH^{m_2}_xL^2_v} \|f_3\|_{\widetilde{L}^{\gamma_2}_TH^{\delta_2}_x L_\gamma^2},\\
            &\quad+\|f_1\|_{\widetilde{L}^{\alpha_3}_TH^{m_3}_xL^2_v}\|f_2\|_{\widetilde{L}^{\beta_3}_TH^{\delta_3}_xL^2_v} \|f_3\|_{\widetilde{L}^{\gamma_3}_TH^{r_3+(\frac{3}{2}-\delta_3)_+}_x L_\gamma^2},\\
            &\quad+\|f_1\|_{\widetilde{L}^{\alpha_4}_TH^{r_4}_xL^2_v}\|f_2\|_{\widetilde{L}^{\beta_4}_TH^{\delta_4}_xL^2_v} \|f_3\|_{\widetilde{L}^{\gamma_4}_TH^{m_4}_x L_\gamma^2}.
     \end{aligned}\label{ineq-tri-L2}
     \end{align}
In addition, for $m>0$, it holds that
  \begin{align}\label{ineq-tri-homo-L2}
     \begin{aligned}
      &\left\|\CT\left(f_1, f_2,f_3\right)\right\|_{L_T^2 \dot H_x^m\left(L_\gamma^2\right)^{\prime}} \\
      \lesssim& \|f_1\|_{\widetilde{L}^{\alpha_1}_T\dot H^{m_1}_xL^2_v}\|f_2\|_{\widetilde{L}^{\beta_1}_T\dot H^{r_1+(\frac{3}{2}-\delta_1)_+}_xL^2_v} \|f_3\|_{\widetilde{L}^{\gamma_1}_T\dot H^{\delta_1}_x L_\gamma^2} \\
&\quad+\|f_1\|_{\widetilde{L}^{\alpha_2}_T\dot H^{r_2}_xL^2_v}\|f_2\|_{\widetilde{L}^{\beta_2}_T\dot H^{m_2}_xL^2_v} \|f_3\|_{\widetilde{L}^{\gamma_2}_T\dot H^{\delta_2}_x L_\gamma^2},\\
            &\quad+\|f_1\|_{\widetilde{L}^{\alpha_3}_T\dot H^{m_3}_xL^2_v}\|f_2\|_{\widetilde{L}^{\beta_3}_T\dot H^{\delta_3}_xL^2_v} \|f_3\|_{\widetilde{L}^{\gamma_3}_T\dot H^{r_3+(\frac{3}{2}-\delta_3)_+}_x L_\gamma^2},\\
            &\quad+\|f_1\|_{\widetilde{L}^{\alpha_4}_T\dot H^{r_4}_xL^2_v}\|f_2\|_{\widetilde{L}^{\beta_4}_T\dot H^{\delta_4}_xL^2_v} \|f_3\|_{\widetilde{L}^{\gamma_4}_T\dot H^{m_4}_x L_\gamma^2}.
     \end{aligned}
     \end{align}
\end{lemma}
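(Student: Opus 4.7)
The strategy is to reduce the trilinear estimate to a three-fold Sobolev product estimate in frequency, by passing through the pointwise-in-$x$ bound \eqref{es-T}. I would first observe that \eqref{es-T} is one of four natural pointwise dual bounds of the form
\[
\|\CT(f_1,f_2,f_3)\|_{(L_\gamma^2)'}\lesssim\|f_i\|_{L_\gamma^2}\|f_j\|_{L^2_v}\|f_k\|_{L^2_v},\qquad \{i,j,k\}=\{1,2,3\},
\]
the remaining three being obtained by direct inspection of the four terms $t_i$ defined before \eqref{bilinear-sym} (relabeling the collision variables $v,v_*,v',v_*'$) or, equivalently, by symmetrizing via \eqref{trilinear-sym}. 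Each of the four variants will produce one of the four terms in \eqref{ineq-tri-L2}.

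Next, applying the Fourier transform in $x$ and Minkowski in velocity gives
\[
\|\widehat{\CT}(f_1,f_2,f_3)(\xi,t)\|_{(L_\gamma^2)'}\lesssim(h_1\ast h_2\ast h_3)(\xi,t),
\]
where the scalar $h_\ell(\xi,t)$ equals $\|\widehat{f_\ell}(t,\xi,\cdot)\|_{L^2_v}$ or $\|\widehat{f_\ell}(t,\xi,\cdot)\|_{L_\gamma^2}$ according to the chosen variant. Integrating against $\langle\xi\rangle^{2m}$ in $L^2_\xi$, applying Hölder in time with $\tfrac{1}{\alpha_j}+\tfrac{1}{\beta_j}+\tfrac{1}{\gamma_j}=\tfrac12$, and distributing everything through the Chemin--Lerner structure, reduces the bound to the deterministic product estimate
\[
\bigl\|\langle\xi\rangle^m(h_1\ast h_2\ast h_3)\bigr\|_{L^2_\xi}\lesssim\|h_1\|_{H^{m+(3/2-r)_+}}\|h_2\|_{H^{r+(3/2-\delta)_+}}\|h_3\|_{H^{\delta}}
\]
together with its three cousins obtained by permuting the roles of $h_1,h_2,h_3$.

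This last inequality is proved by a cascade of two applications of the standard two-factor Sobolev product law $\|FG\|_{H^a}\lesssim \|F\|_{H^{a+(3/2-b)_+}}\|G\|_{H^b}$ in $\mathbb{R}^3$, valid for $a,b\neq 3/2$: first on the inner pair with target regularity $r$, then on the resulting outer product with target regularity $m$. The correction $(3/2-s)_+$ records the Sobolev jump needed when $s<3/2$ prevents the embedding $H^s\hookrightarrow L^\infty$, and the endpoint $s=3/2$ is excluded because of the logarithmic failure there, which is exactly the hypothesis $r_i,\delta_i\neq 3/2$. The combinatorics of which factor absorbs the two regularity lifts produces the four expressions $m_j$ written before \eqref{ineq-tri-L2}: terms 1 and 3 distribute one lift to $f_1$ and another to $f_2$ or $f_3$, whereas in terms 2 and 4 the outer factor carrying the lift coincides with the inner factor carrying the lift, producing the double correction $(3/2-r_j)_++(3/2-\delta_j)_+$ in $m_2$ and $m_4$. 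The homogeneous statement \eqref{ineq-tri-homo-L2} follows by the same scheme with the homogeneous Kato--Ponce product law $\|FG\|_{\dot H^a}\lesssim \|F\|_{\dot H^{a+(3/2-b)_+}}\|G\|_{\dot H^b}$ replacing the inhomogeneous one, the hypothesis $m>0$ ensuring that the target homogeneous norm is well-defined.

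The main obstacle is not any single step but the combinatorial bookkeeping: one must verify that each of the four pointwise permutations feeds through the two cascading product laws in exactly the way that produces the $j$-th term of \eqref{ineq-tri-L2} with the stated $m_j$. A secondary technical issue is that the order of the integrations in $\xi$, $t$ and $v$ must be taken in the Chemin--Lerner sense so that the $L^2_\xi$ and $\widetilde L^{p}_T$ norms interact correctly with the frequency convolution; otherwise the Hölder step in time is not licensed to commute with the $L^2_\xi$ integration.
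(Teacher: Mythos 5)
Your overall strategy --- push the pointwise dual bound through the Fourier transform in $x$, then reduce to a triple product Sobolev estimate proved by a cascade of two two-factor product laws --- is the same route the paper takes, but there is a concrete error in how you claim the four terms arise, and it would derail the bookkeeping you rightly identify as the main obstacle.

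You assert that \eqref{es-T} is ``one of four natural pointwise dual bounds'' with the $L_\gamma^2$ norm landing on each of $f_1,f_2,f_3$ in turn, and that ``each of the four variants will produce one of the four terms in \eqref{ineq-tri-L2}.'' Look at \eqref{ineq-tri-L2}: in all four terms the factor $f_3$ carries the $L_\gamma^2$ weight and $f_1,f_2$ carry $L_v^2$. So the four terms cannot come from permuting which factor is placed in $L_\gamma^2$; a single fixed pointwise bound of the shape $\|\CT(f_1,f_2,f_3)\|_{(L_\gamma^2)'}\lesssim\|f_1\|_{L_v^2}\|f_2\|_{L_v^2}\|f_3\|_{L_\gamma^2}$ is used throughout. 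The four terms instead come from a $2\times 2$ case analysis in the iterated $\xi$-convolution: at the inner convolution $F_2\ast F_3$ one splits on $|\zeta|<|\eta-\zeta|$ vs.\ the complement (with a change of variables in the second region), and one splits again at the outer convolution $F_1\ast(F_2\ast F_3)$ on $|\eta|<|\xi-\eta|$ vs.\ its complement. This is the Lemari\'e--Rieusset Lemma~7.3 dichotomy applied twice, and it is precisely this dichotomy --- which factor absorbs the Young/Cauchy--Schwarz weight $\langle\cdot\rangle^{-2r}$ at each level --- that generates the exponents $m_1,\ldots,m_4$. Your ``cascade of two Sobolev product laws'' is essentially the same computation, but you have attributed the four-fold multiplicity to the wrong mechanism; if you were to actually permute the $L_\gamma^2$ assignment as well, you would end up with sixteen terms, not four, and many of them would be false because the implicit pointwise bound does not hold for arbitrary placements of the weight.

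Two smaller remarks. First, note that \eqref{es-T} as stated puts $f_3$ in $L_v^2$ and one of $f_1,f_2$ in $L_\gamma^2$, whereas the lemma and its proof require a bound with $f_3$ in $L_\gamma^2$; the reduction to a bound of the latter form needs at least a sentence (using the structure of the $t_i$'s, not an embedding, since $L_\gamma^2\hookrightarrow L_v^2$ goes the wrong way). Second, the Chemin--Lerner bookkeeping you worry about is indeed handled by Minkowski's inequality: one moves the $L^2_T$ norm inside the $\xi$-convolution and then applies H\"older in time, before the Young/Cauchy--Schwarz step in $\xi$, so the concern you raise is legitimate but resolved in the standard order.
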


\begin{proof}
    For simplicity, we denote by $F_1(t,\xi):=\|\widehat{f}_1(t,\xi)\|_{L^2_v}$, $F_2(t,\xi):=\|\widehat{f}_2(t,\xi)\|_{L^2_v}$, and $F_3(t,\xi):=\|\widehat{f}_3(t,\xi)\|_{L_\gamma^2}$.
    From the arguments in \cite[Lemma 7.3]{Lemarie2016NS}, we have
    \begin{align}
        &\int_{\BR}F_2(\eta-\zeta)F_3(\zeta)\ud\zeta\nonumber\\
        \lesssim&\int_{|\zeta|<|\eta-\zeta|}F_2(\eta-\zeta)F_3(\zeta)\ud\zeta+\int_{|\zeta|<|\eta-\zeta|}F_2(\zeta)F_3(\eta-\zeta)\ud\zeta\nonumber\\
        =:&J_{23}(\eta)+J^{\prime}_{23}(\eta).\nonumber
    \end{align}
    Applying the arguments in \cite[Lemma 7.3]{Lemarie2016NS} once more to the above inequality, we derive
    \begin{align}
    \begin{aligned}
     &\int_{\BR}F_1(t,\xi-\eta)\int_{\BR}F_2(\eta-\zeta)F_3(\zeta)\ud\zeta\ud\eta\\
     \lesssim&\int_{\BR}F_1(t,\xi-\eta)(J_{23}(\eta)+J^{\prime}_{23}(\eta))\ud\eta\\
        \lesssim&\int_{|\eta|<|\xi-\eta|}F_1(t,\xi-\eta)J_{23}(\eta)\ud\eta+\int_{|\eta|<|\xi-\eta|}F_1(t,\eta)J_{23}(\xi-\eta)\ud\eta\\
        &\quad+\int_{|\eta|<|\xi-\eta|}F_1(t,\xi-\eta)J_{23}^{\prime}(\eta)\ud\eta+\int_{|\eta|<|\xi-\eta|}F_1(t,\eta)J^{\prime}_{23}(\xi-\eta)\ud\eta \\
        =:&J_1(\xi)+J_2(\xi)+J_3(\xi)+J_4(\xi).
    \end{aligned}\nonumber
    \end{align}
    As a consequence, we conclude that
    \begin{align}\nonumber
        \begin{aligned}
         &\|\widehat{\CT}(f_1,f_2,f_3)(\xi)\|_{L^2_T(L_\gamma^2)^{\prime}}\\
         &\lesssim\left\{\int_0^T\left(\int_{|\eta|<|\xi-\eta|}F_1(t,\xi-\eta)\int_{|\zeta|<|\eta-\zeta|}F_2(\eta-\zeta)F_3(\zeta)\ud\zeta\ud\eta\right)^2 \mathrm{~d} t\right\}^{\frac{1}{2}}\\
         &\lesssim\sum_{i=1}^4\|J_i(\xi)\|_{L^2_T}.
        \end{aligned}
        \end{align}
        To estimate the term $\|J_1(\xi)\|_{L^2_T}$, we apply Minkowski's inequality and Young's inequality. For any $\alpha_1,\beta_1,\gamma_1\in[1,\infty]$ such that $\frac{1}{\al_1}+\frac{1}{\beta_1}+\frac{1}{\gamma_1}=\frac{1}{2}$, we have
        \begin{align}
        \begin{aligned}
              \|J_1(\xi)\|_{L^2_T}&\lesssim\int_{|\eta|<|\xi-\eta|}\int_{|\zeta|<|\eta-\zeta|}\|F_1(\xi-\eta)F_2(\eta-\zeta)F_3(\zeta)\|_{L^2_T}\ud\zeta\ud\eta,\\
           &\lesssim\int_{|\eta|<|\xi-\eta|}\int_{|\zeta|<|\eta-\zeta|}\|F_1(\xi-\eta)\|_{L^{\al_1}_T}\|F_2(\eta-\zeta)\|_{L^{\beta_1}_T}\|F_3(\zeta)\|_{L^{\gamma_1}_T}\ud\zeta\ud\eta.
          % \|J_2(\xi)\|_{L^2_T}&\lesssim\int_{|\eta|<|\xi-\eta|}\int_{|\zeta|<|\xi-\eta-\zeta|}\|F_1(\eta)\|_{L^{\al_2}_T}\|F_2(\xi-\eta-\zeta)\|_{L^{\beta_2}_T}\|F_3(\zeta)\|_{L^{\gamma_2}_T}\ud\zeta\ud\eta,\\
       %     \|J_3(\xi)\|_{L^2_T}&\lesssim\int_{|\eta|<|\xi-\eta|}\int_{|\zeta|<|\eta-\zeta|}\|F_1(\xi-\eta)\|_{L^{\al_3}_T}\|F_2(\zeta)\|_{L^{\beta_3}_T}\|F_3(\eta-\zeta)\|_{L^{\gamma_3}_T}\ud\zeta\ud\eta,\\
      %      \|J_4(\xi)\|_{L^2_T}&\lesssim\int_{|\eta|<|\xi-\eta|}\int_{|\zeta|<|\xi-\eta-\zeta|}\|F_1(\eta)\|_{L^{\al_4}_T}\|F_2(\zeta)\|_{L^{\beta_4}_T}\|F_3(\xi-\eta-\zeta)\|_{L^{\gamma_4}_T}\ud\zeta\ud\eta.
        \end{aligned}\label{ineq-J-1}
        \end{align}
        We denote
        \begin{align*}
            I_1(\eta):=\int_{|\zeta|<|\eta-\zeta|}\|F_2(\eta-\zeta)\|_{L^{\beta_1}_T}\|F_3(\zeta)\|_{L^{\gamma_1}_T}\ud\zeta,
        \end{align*}
        then for any $r_1\neq\frac{3}{2}$, applying Young's inequality to \eqref{ineq-J-1} we have
        \begin{align}
          \|J_1(\xi)\|_{L^2_T}^2\lesssim\|\weight^{r_1}I_1(\cdot)\|_{L^2}^2\int_{|\eta|<|\xi-\eta|}\langle\eta\rangle^{-2r_1}\|F_1(\xi-\eta)\|_{L^{\al_1}_T}^2\ud\xi.\nonumber
        \end{align}
        Then, arguing as in the proof of \cite[Proposition 4.7]{carrapatoso2025navier}. In fact, it based on standard inequalities such as $\one_{\{|\eta|<|\xi-\eta|\}}|\xi|\lesssim\one_{\{|\eta|<|\xi-\eta|\}}|\xi-\eta|$, change of variables $\xi-\eta\to\xi^{\prime}$, and
        \begin{align}
            \int_{|\eta|<|\xi-\eta|}\langle\eta\rangle^{-2r_1}\ud\eta\lesssim\begin{cases}
              \langle\xi^{\prime}\rangle^{3-2r_1}\quad &\text{if}\quad r_1<\tfrac{3}{2},\\
              1\quad&\text{if}\quad r_1>\tfrac{3}{2}.
            \end{cases}\nonumber
        \end{align}
        Then we have
        \begin{align}
            \int_{\BR}\langle\xi\rangle^{2m}\|J_1(\xi)\|_{L^2_T}^2\ud\xi\lesssim \|\weight^{r_1}I_1(\cdot)\|_{L^2}^2\|f_1\|_{\widetilde{L}^{\alpha_1}_TH^{m+(\frac{3}{2}-r_1)_+}L^2_v}^2.\label{ineq-J-1-up}
        \end{align}
        Repeating the above progress again, for any $\delta_1\neq\frac{3}{2}$, we get
        \begin{align}
          \begin{aligned}
          \|\weight^{r_1}I_1(\cdot)\|_{L^2}^2&=\int_{\BR}\langle\xi\rangle^{2r_1}\Big(\int_{|\zeta|<|\eta-\zeta|}\|F_2(\eta-\zeta)\|_{L^{\beta_1}_T}\|F_3(\zeta)\|_{L^{\gamma_1}_T}\ud\zeta\Big)^{2}\ud\xi\\
          &\lesssim\int_{\BR}\langle\zeta\rangle^{2\delta_1}\|F_3(\zeta)\|_{L^{\gamma_1}_T}^2\ud\zeta\times\int_{\BR}\langle\xi\rangle^{2r_1}\Big(\int_{|\zeta|<|\eta-\zeta|}\langle\zeta\rangle^{-2\delta_1}\|F_2(\eta-\zeta)\|_{L^{\beta_1}_T}\ud\zeta\Big)^{2}\ud\xi\\
          &\lesssim\|f_3\|_{L^{\gamma_1}_TH^{\delta_1}_x L_\gamma^2}^2\|f_2\|_{L^{\beta_1}_TH^{r_1+(\delta_1-\frac{3}{2})_+}_xL^2_v}^2.
        \end{aligned}\label{ineq-I-1}
        \end{align}
        Putting \eqref{ineq-I-1} into \eqref{ineq-J-1-up} gives
        \begin{align}
         \int_{\BR}\langle\xi\rangle^{2m}\|J_1(\xi)\|_{L^2_T}^2\ud\xi\lesssim \|f_1\|_{\widetilde{L}^{\alpha_1}_TH^{m+(\frac{3}{2}-r_1)_+}L^2_v}^2\|f_2\|_{\widetilde{L}^{\beta_1}_TH^{r_1+(\delta_1-\frac{3}{2})_+}_xL^2_v}^2 \|f_3\|_{\widetilde{L}^{\gamma_1}_TH^{\delta_1}_x L_\gamma^2}^2. \label{ineq-J-1-uup}
        \end{align}
        In the same way, for any $r_i,\delta_i\neq\frac{3}{2},\,i=2,3,4$ and any $\alpha_j,\beta_j,\gamma_j\in[1,\infty],\, j=2,3,4$ satisfy
        \begin{align*}
            \tfrac{1}{\al_2}+\tfrac{1}{\be_2}+\tfrac{1}{\ga_2}=\tfrac{1}{\al_3}+\tfrac{1}{\be_3}+\tfrac{1}{\ga_3}=\tfrac{1}{\al_4}+\tfrac{1}{\be_4}+\tfrac{1}{\ga_4}=\tfrac{1}{2},
        \end{align*}
        Then, we get the other estimates for remaining terms $J_2,J_3,J_4$, respectively,
        \begin{align}
            &\int_{\BR}\langle\xi\rangle^{2m}\|J_2(\xi)\|_{L^2_T}^2\ud\xi\lesssim \|f_1\|_{\widetilde{L}^{\alpha_2}_TH^{r_2}_xL^2_v}^2\|f_2\|_{\widetilde{L}^{\beta_2}_TH^{m+(\tfrac{3}{2}-r_{2})_++(\tfrac{3}{2}-\delta_{2})_+}_xL^2_v}^2 \|f_3\|_{\widetilde{L}^{\gamma_2}_TH^{\delta_2}_x L_\gamma^2}^2,\label{ineq-J-2}\\
            &\int_{\BR}\langle\xi\rangle^{2m}\|J_3(\xi)\|_{L^2_T}^2\ud\xi\lesssim \|f_1\|_{\widetilde{L}^{\alpha_3}_TH^{m+(\tfrac{3}{2}-r_{3})_+}_xL^2_v}^2\|f_2\|_{\widetilde{L}^{\beta_3}_TH^{\delta_3}_xL^2_v}^2 \|f_3\|_{\widetilde{L}^{\gamma_3}_TH^{r_3+(\frac{3}{2}-\delta_3)_+}_x L_\gamma^2}^2,\label{ineq-J-3}\\
            &\int_{\BR}\langle\xi\rangle^{2m}\|J_4(\xi)\|_{L^2_T}^2\ud\xi\lesssim \|f_1\|_{\widetilde{L}^{\alpha_4}_TH^{r_4}_xL^2_v}^2\|f_2\|_{\widetilde{L}^{\beta_4}_TH^{\delta_4}_xL^2_v}^2 \|f_3\|_{\widetilde{L}^{\gamma_4}_TH^{m+(\tfrac{3}{2}-r_{4})_++(\tfrac{3}{2}-\delta_{4})_+}_x L_\gamma^2}^2.\label{ineq-J-4}
        \end{align}
        Combining \eqref{ineq-J-1-uup}-\eqref{ineq-J-4} gives \eqref{ineq-tri-L2}.

        The proof of \eqref{ineq-tri-homo-L2} is analogous to that of \eqref{ineq-tri-L2}, with the only difference that in the homogeneous Sobolev setting, derivatives correspond in Fourier variables to multiplication by $|\xi|$.
\end{proof}

 The proof of the next lemma follows along the same lines as that of Lemma~\ref{lem-bilinear-ope}, and will therefore be only sketched here.

\begin{lemma}\label{lem-homo-bi}
  Assume that  $m>0$ and the functions $F_1\,,F_2$ are sufficiently smooth.
  Suppose that $p_1, q_1, p_2, q_2 \in[1, \infty]$ satisfy $\frac{1}{p_1}+\frac{1}{q_1}=\frac{1}{p_2}+\frac{1}{q_2}=\frac{1}{2}$. For any $r_1,r_2\neq\frac{3}{2}$, then
  \begin{align}
  \begin{aligned}
      \|F_1F_2\|_{L^2_TH^m_x} \lesssim & \left\|F_1\right\|_{\widetilde{L}_T^{p_1} H_x^{m+\left(\frac{3}{2}-r_1\right)_{+}} }\left\|F_2\right\|_{\widetilde{L}_T^{q_1} H_x^{r_1} } +\left\|F_1\right\|_{\widetilde{L}_T^{p_2} H_x^{r_2} }\left\|F_2\right\|_{\widetilde{L}_T^{q_2} H_x^{m+\left(\frac{3}{2}-r_2\right)_{+}}}.
  \end{aligned}\nonumber
  \end{align}
\end{lemma}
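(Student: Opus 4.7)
The plan is to follow the strategy used for Lemma \ref{lem-bilinear-ope}, simplified to the case of the pointwise product $F_1F_2$ (no collision structure). Since multiplication in $x$ corresponds to convolution in Fourier, we have the pointwise bound
\[
|\widehat{F_1F_2}(t,\xi)| \;\leqslant\; \int_{\BR} |\widehat{F}_1(t,\xi-\eta)|\,|\widehat{F}_2(t,\eta)|\,\ud\eta,
\]
and because $p=2$ on the target side, $\|F_1F_2\|_{L^2_TH^m_x} = \|F_1F_2\|_{\widetilde{L}^2_TH^m_x}$, so the entire argument can be run frequency-by-frequency with time norms on the Fourier side.

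First I would split the convolution integral into the two regions $\{|\eta|<|\xi-\eta|\}$ and $\{|\eta|\geqslant|\xi-\eta|\}$. By the symmetry of the product, the two regions contribute the two terms on the right-hand side, with the roles of $F_1$ and $F_2$ and of the exponent triples $(p_1,q_1,r_1)$ and $(p_2,q_2,r_2)$ exchanged; it therefore suffices to bound the first region. There, since $|\eta|<|\xi-\eta|$ implies $\langle\xi\rangle\lesssim\langle\xi-\eta\rangle$, the weight $\langle\xi\rangle^m$ transfers onto the $\widehat{F}_1$ factor. Next, for $r_1\neq 3/2$, I would write $|\widehat{F}_2(t,\eta)|=\langle\eta\rangle^{-r_1}\cdot\langle\eta\rangle^{r_1}|\widehat{F}_2(t,\eta)|$ and apply Cauchy--Schwarz in $\eta$, using the elementary estimate
\[
\int_{|\eta|<|\xi-\eta|}\langle\eta\rangle^{-2r_1}\,\ud\eta \;\lesssim\; \begin{cases}\langle\xi-\eta\rangle^{3-2r_1}, & r_1<3/2,\\ 1, & r_1>3/2,\end{cases}
\]
which is precisely what produces the exponent $m+(3/2-r_1)_+$ on the $F_1$ factor.

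For the time variables I would apply Minkowski to pull the $L^2_T$ norm inside the convolution, and then Hölder's inequality in $t$ with $1/p_1+1/q_1=1/2$ to separate into $\|\widehat{F}_1(\cdot,\xi-\eta)\|_{L^{p_1}_T}\|\widehat{F}_2(\cdot,\eta)\|_{L^{q_1}_T}$. Squaring, integrating in $\xi$ against $\langle\xi\rangle^{2m}$, and changing variables $\xi-\eta\mapsto\xi'$ then reproduces the product of $\widetilde{L}^{p_1}_TH^{m+(3/2-r_1)_+}_x$ and $\widetilde{L}^{q_1}_TH^{r_1}_x$ norms; the symmetric region produces the second term.

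Because this is essentially a transcription of the argument leading to \eqref{es-Q-f1f2}, with the collision-operator analysis stripped away, there is no genuine obstacle. The only points requiring care are the order in which one applies Minkowski (to interchange $L^2_\xi$ and $L^2_T$) and Hölder (in time), and the fact that the dichotomy $r_1\neq 3/2$ is needed to make the weighted integral finite; the hypothesis $m>0$ enters only insofar as it makes the weight $\langle\xi\rangle^m$ genuinely active in the high-frequency transfer.
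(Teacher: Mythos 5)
Your proof is correct and reproduces the argument the paper intends: it is the bilinear specialization of the detailed proof of Lemma \ref{lem-tri-estimates} (region decomposition on $\{|\eta|<|\xi-\eta|\}$ and its complement, weight transfer $\langle\xi\rangle^m\lesssim\langle\xi-\eta\rangle^m$ on the low-$|\eta|$ region, Minkowski plus H\"older in time, Cauchy--Schwarz in the spatial convolution with the $\langle\eta\rangle^{\pm r_1}$ split and the elementary weighted-integral bound, then a change of variables). The paper omits this proof entirely, saying only that it ``follows along the same lines'' as Lemma \ref{lem-bilinear-ope}, so your blind reconstruction is exactly the argument that was left implicit.
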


\subsection{\texorpdfstring{Refined Estimates for $\mathcal{Q}$ and $\mathcal{T}$}{Refined Estimates for Q and T}}\label{subsec-Rfdes-QT}
Firstly, we establish the estimates for the semigroup $U^\ve$, which serve as a fundamental tool for deriving nonlinear estimates. We define the fluid variables for any function $f$ as follows
\begin{equation*}
  \left\{\begin{split}
            \rho[f]:=&\tfrac{1}{K_A E_0-3 E_2 / 2}\int_{\mathbb{R}^3}\big[1+\big(\tfrac{2 E_0}{3 E_2} K_g-1\big) \tfrac{|v|^2}{2}\big]f\sqrt{\muq}\,\ud v, \\
       u[f]:=&\tfrac{1}{E_2}\int_{\BR}vf\sqrt{\muq}\,\ud v,\\
       \vartheta[f]:=&\tfrac{1}{K_A E_0-3 E_2 / 2}\int_{\BR}\big(\tfrac{E_0}{3 E_2}|v|^2-1\big)f\sqrt{\muq}\,\ud v.
         \end{split}
  \right.
\end{equation*}
Furthermore, for some constants $0<\delta_1\ll\delta_2\ll\delta_3\ll1$ and each $\xi\in\BR$, we define a bilinear symmetric form $\mathscr{B}[\cdot,\cdot]$ depending on $\xi$ by
\begin{align*}
 \mathscr{B}&[f_1(\xi), f_2(\xi)] :=\delta_1E_2\frac{i\xi}{\langle \xi\rangle}\cdot \Big(u[\widehat{f}_1(\xi)]\rho[\widehat{f}_2(\xi)]+u[\widehat{f}_2(\xi)]\rho[\widehat{f}_1(\xi)]\Big)\\
 &+\delta_2\Big(\tfrac{3E_4}{2}+3E_{22}-3E_2K_g\Big)\frac{i\xi}{\langle \xi\rangle}\cdot \left(u[\widehat{f}_1(\xi)]\vartheta[\widehat{f}_2(\xi)] + u[\widehat{f}_2(\xi)]\vartheta[\widehat{f}_1(\xi)]\right)\\
 &+\delta_2\left\{\frac{i\xi}{\langle \xi\rangle}\otimes u[\widehat{f}_1(\xi)]:\int_{\BR}v\otimes v\PP_0^{\perp}\widehat{f}_2(\xi) \sqrt{\muq}\,\ud v + \frac{i\xi}{\langle \xi\rangle}\otimes u[\widehat{f}_2(\xi)]:\int_{\BR}v\otimes v\PP_0^{\perp}\widehat{f}_1(\xi) \sqrt{\muq}\,\ud v\right\}\\
 &+\delta_3\frac{i\xi}{\langle \xi\rangle}\cdot \left\{\vartheta[\widehat{f}_1(\xi)]\int_{\BR}\tfrac{|v|^2}{2}v\PP_0^{\perp}\widehat{f}_2(\xi) \sqrt{\muq}\,\ud v +\vartheta[\widehat{f}_2(\xi)]\int_{\BR}\tfrac{|v|^2}{2}v\PP_0^{\perp}\widehat{f}_1(\xi) \sqrt{\muq}\,\ud v\right\}.
\end{align*}
In addition, we also define a modified inner product $\langle\langle\cdot,\cdot\rangle\rangle_{L^2_v}$ on $L^2_v$ by
\begin{align}\label{def-inner-product}
 \langle\langle f_1(\xi),f_2(\xi)\rangle\rangle_{L^2_v}:=\langle f_1(\xi),f_2(\xi)\rangle_{L^2_v}+\ve\mathscr{B}[f_1(\xi), f_2(\xi)],
\end{align}
with its associated norm $|\!|\!|f|\!|\!|_{L^2_v}:=\langle\langle f,f\rangle\rangle_{L^2_v}^{\frac{1}{2}}$.
\begin{remark}\label{rmk-norm}
  We can choose $\delta_1,\,\delta_2,\delta_3$ small enough such that for any $\ve<1$ and $f\in L_v^2$
  \begin{equation}\label{eql-norm}
  |\!|\!|f|\!|\!|_{L^2_v}\sim\|f\|_{L^2_v}.
  \end{equation}
  We refer to Theorem 2.3 in \cite{Strain2012KRM} for this arguments.
\end{remark}

Now we consider the Cauchy problem:
\begin{equation}\label{eq-CP}
  \begin{cases}
    \partial_t \widehat{f}(\xi)=\widehat{\Lambda}^{\ve}(\xi)\widehat{f}(\xi) + \widehat{w}(\xi),\\
    \widehat{f}(\xi)\mid_{t=0}=\widehat{f}_{\mathrm{in}}(\xi).
\end{cases}
\end{equation}
We then have
\begin{lemma}
  For functional $\operatorname{Re}\mathscr{B}[\widehat{f}(\xi),\widehat{f}(\xi)]$, there is a constant $\lambda_1>0$ such that for any $\xi\in\mathbb{R}^3$,
  \begin{equation}\label{es-mac}
    \ve\partial_t\operatorname{Re}\mathscr{B}[\widehat{f}(\xi),\widehat{f}(\xi)] + \frac{\lambda_1|\xi|^2}{1 + |\xi|^2}\|\mathbf{P}_0\widehat{f}(\xi,\cdot)\|_{L_v^2}^2 \lesssim \frac{1}{\ve^2}\|\mathbf{P}_0^{\perp}\widehat{f}(\xi,\cdot)\|_{L_v^2}^2 + \ve^2 \|\widehat{w}(\xi,\cdot)\|_{(L_\gamma^2)'}^2,
  \end{equation}
  and
  \begin{equation}\label{es-mic}
    \ddt\|\widehat{f}(t,\xi,\cdot)\|_{L_v^2}^2 + \frac{\lambda}{\ve^2}\|\mathbf{P}_0^{\perp}\widehat{f}(t,\xi,\cdot)\|_{L_\gamma^2}^2 \lesssim |\operatorname{Re}\langle \widehat{w}(\xi,\cdot),\,\widehat{f}(\xi,\cdot)\rangle_{L_v^2}|.
  \end{equation}
\end{lemma}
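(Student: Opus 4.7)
The microscopic estimate \eqref{es-mic} is a direct energy identity: taking the complex $L_v^2$-inner product of \eqref{eq-CP} with $\widehat{f}(t,\xi,\cdot)$ and extracting the real part, the streaming term $-\varepsilon^{-1}iv\cdot\xi\,\widehat{f}$ is purely imaginary and disappears, while the spectral gap in Lemma~\ref{lem-linear-operator-propreties} supplies
\[
\mathrm{Re}\langle \varepsilon^{-2}\CL\widehat{f},\widehat{f}\rangle_{L_v^2}=\varepsilon^{-2}\langle\CL\widehat{f},\widehat{f}\rangle_{L_v^2}\leqslant-\tfrac{\lambda}{\varepsilon^{2}}\|\PP_0^{\perp}\widehat{f}\|_{L_\gamma^2}^{2}.
\]
Rearranging and leaving the source pairing on the right gives \eqref{es-mic}.

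The harder estimate \eqref{es-mac} is a hypocoercive identity in the spirit of Kawashima / Ellis--Pinsky / Strain \cite{Strain2012KRM}. The plan is to derive local balance laws by projecting \eqref{eq-CP} onto the basis $\{\sqrt{\muq},\,v\sqrt{\muq},\,(\tfrac{|v|^2}{2}-K_g)\sqrt{\muq}\}$ of $\Ker\CL$. Because $\CL$ annihilates this basis, the collision term drops out, leaving schematic Fourier-space balance laws
\begin{align*}
\partial_t\rho[\widehat{f}]+\tfrac{i\xi}{\varepsilon}\cdot u[\widehat{f}]&=\text{(source from }\widehat{w}),\\
\partial_t u[\widehat{f}]+\tfrac{i\xi}{\varepsilon}(\rho[\widehat{f}]+\vartheta[\widehat{f}])+\tfrac{i\xi}{\varepsilon}\cdot\!\!\int_{\BR}\!v\otimes v\,\PP_0^{\perp}\widehat{f}\sqrt{\muq}\,\ud v&=\text{(source)},\\
\partial_t\vartheta[\widehat{f}]+\tfrac{i\xi}{\varepsilon}\cdot u[\widehat{f}]+\tfrac{i\xi}{\varepsilon}\cdot\!\!\int_{\BR}\!\tfrac{|v|^2}{2}v\,\PP_0^{\perp}\widehat{f}\sqrt{\muq}\,\ud v&=\text{(source)},
\end{align*}
whose coefficients are precisely those encoded in \eqref{def-inner-product}.

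Next, differentiate $\mathrm{Re}\,\mathscr{B}[\widehat{f},\widehat{f}]$ term by term and substitute the balance laws. Whenever $\partial_t u$, $\partial_t\rho$ or $\partial_t\vartheta$ is replaced by its transport contribution, the pre-existing multiplier $\tfrac{i\xi}{\langle\xi\rangle}$ combines with the new $\tfrac{i\xi}{\varepsilon}$ to produce a coercive diagonal term of the form
\[
-\tfrac{1}{\varepsilon}\tfrac{|\xi|^{2}}{\langle\xi\rangle^{2}}\Bigl(\delta_1|\rho[\widehat f]|^2+\delta_2|u[\widehat f]|^2+\delta_3|\vartheta[\widehat f]|^2\Bigr)+\text{cross terms},
\]
and after multiplication by $\varepsilon$ this is exactly the macroscopic dissipation $\tfrac{\lambda_1|\xi|^{2}}{1+|\xi|^{2}}\|\PP_0\widehat f\|_{L_v^{2}}^{2}$ claimed in \eqref{es-mac}, using $\|\PP_0\widehat f\|_{L_v^2}^2\sim|\rho|^2+|u|^2+|\vartheta|^2$. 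The cross terms either couple macroscopic and microscopic parts (bounded by Cauchy--Schwarz followed by a Young inequality to yield the $\varepsilon^{-2}\|\PP_0^{\perp}\widehat f\|_{L_v^{2}}^{2}$ contribution) or pair macroscopic moments with $\widehat{w}$ through duality $\|\widehat{w}\|_{(L_\gamma^2)'}\|\PP_0\widehat f\|_{L^2_v}$, producing the $\varepsilon^{2}\|\widehat w\|_{(L_\gamma^2)'}^{2}$ contribution after one more Young inequality.

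The main obstacle is not conceptual but combinatorial: one must verify that every cross term created by differentiating $\mathscr{B}$ is absorbed by the diagonal dissipation after fixing the hierarchy $\delta_1\ll\delta_2\ll\delta_3\ll1$, exactly the mechanism that forces $|\!|\!|\cdot|\!|\!|_{L^2_v}\sim\|\cdot\|_{L^2_v}$ in Remark~\ref{rmk-norm}. A secondary subtlety is keeping the multipliers $\tfrac{i\xi}{\langle\xi\rangle}$ unspoilt throughout the calculation, so that the coercive factor in the final estimate is $|\xi|^{2}/(1+|\xi|^{2})$ rather than $|\xi|^{2}$; this bounded multiplier structure is what allows \eqref{es-mac} to hold uniformly for all $\xi\in\BR$ and is therefore what later enables integration against $\langle\xi\rangle^{2s}$ to yield Sobolev-type semigroup estimates.
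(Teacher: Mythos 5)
Your outline is correct and follows the same hypocoercivity route the paper adopts: the paper's proof of this lemma consists of citing Duan \cite{Duan2011NON} for the macroscopic bound and Strain \cite{Strain2012KRM} for the microscopic one, and your steps (energy identity plus spectral gap for \eqref{es-mic}; balance laws, substitution into $\partial_t\mathrm{Re}\,\mathscr{B}$, absorption of cross terms by Young's inequality for \eqref{es-mac}) are exactly what those references do.

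One factor mismatch is worth flagging. With the paper's stated multiplier $\tfrac{i\xi}{\langle\xi\rangle}$ in the corrector $\mathscr{B}$, combining with the transport factor $\tfrac{i\xi}{\ve}$ gives a coercive term of order $\tfrac{1}{\ve}\tfrac{|\xi|^2}{\langle\xi\rangle}$, not the $\tfrac{1}{\ve}\tfrac{|\xi|^2}{\langle\xi\rangle^2}$ you wrote. The coefficient you wrote — and the statement \eqref{es-mac} — is what one gets if the corrector uses the bounded multiplier $\tfrac{i\xi}{1+|\xi|^2}=\tfrac{i\xi}{\langle\xi\rangle^2}$; with $\tfrac{i\xi}{\langle\xi\rangle}$ as literally written, the macro--micro cross terms produced when a $\tfrac{i\xi}{\ve}$ lands on the $\PP_0^{\perp}$ moments pick up an uncontrolled extra $|\xi|$ after Young's inequality, and \eqref{es-mac} would not hold uniformly in $\xi$. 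So your algebra is internally consistent with the corrected multiplier $\tfrac{i\xi}{1+|\xi|^2}$, and the discrepancy traces to what appears to be a typo in the paper's definition of $\mathscr{B}$ rather than to any error in your reasoning.
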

\begin{proof}
  The first estimate called macroscopic estimate is given by a similar way to \cite[Lemma 4.1]{Duan2011NON}, and the second estimate called microscopic estimate is given by a similar way to \cite[(2.8)]{Strain2012KRM}.
\end{proof}

By using the lemma above, we can obtain the following estimates.
\begin{lemma}\label{lem-semi-hypocoercivity}
    Give $m\geqslant 1$ and $T>0$, it holds that
    \begin{enumerate}[(1)]
\item For any $f \in H^m_x L^2_v$, we have for all $t>0$,
\begin{align*}
\|U^\varepsilon(\cdot) f\|_{\widetilde{L}^\infty_t H^m_x L^2_v}
+ \|\mathbf{P}_0 U^\varepsilon(\cdot) f\|_{L^2_t \dot{H}^m_x L^2_v}
+ \frac{1}{\varepsilon} \|\mathbf{P}_0^\perp U^\varepsilon(\cdot) f\|_{L^2_t H^m_x L_\gamma^2}
\lesssim \|f\|_{H^m_x L^2_v}.
\end{align*}

\item Let $S = S(t,x,v)$ satisfy $\mathbf{P}_0 S = 0$ and $S \in L^2_T H^m_x (L_\gamma^2)^{\prime}$. Then for any $t \leqslant T$,
\begin{align*}
&\left\| \int_0^t U^\varepsilon(t - s) S(s) \, \mathrm{d}s \right\|_{\widetilde{L}^\infty_T H^m_x L^2_v} + \left\| \mathbf{P}_0 \int_0^t U^\varepsilon(t - s) S(s) \, \mathrm{d}s \right\|_{L^2_T \dot{H}^m_x L^2_v} \\
&\hspace{2cm}+ \frac{1}{\varepsilon} \left\| \mathbf{P}_0^\perp \int_0^t U^\varepsilon(t - s) S(s) \, \mathrm{d}s \right\|_{L^2_T H^m_x L_\gamma^2}
\lesssim \varepsilon \|S\|_{L^2_T H^m_x (L_\gamma^2)^{\prime}}.
\end{align*}
\end{enumerate}
\end{lemma}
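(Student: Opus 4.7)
The plan is to combine the microscopic inequality \eqref{es-mic} and the macroscopic inequality \eqref{es-mac} (with an appropriately small weight) into a single hypocoercivity-type differential inequality for the Fourier transform, then integrate in time and take $\langle\xi\rangle^{2m}$-weighted integrals in $\xi$. Fix $\xi\in\BR$ and let $\widehat{f}(t,\xi)$ solve \eqref{eq-CP}. I introduce the frequency-dependent energy functional
\begin{equation*}
\mathscr{E}_\xi(\widehat{f}) := \|\widehat{f}(\xi)\|^2_{L_v^2} + \eta\ve\,\operatorname{Re}\mathscr{B}[\widehat{f}(\xi),\widehat{f}(\xi)],
\end{equation*}
with $\eta>0$ small enough that $\mathscr{E}_\xi(\widehat{f})\sim\|\widehat{f}(\xi)\|^2_{L_v^2}$ uniformly in $\xi$ and $\ve$ (Remark \ref{rmk-norm} with $\eta\ve$ in place of $\ve$). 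Adding $\eta\times$\eqref{es-mac} to \eqref{es-mic} and using the embedding $L_\gamma^2\hookrightarrow L_v^2$ (valid since $\gamma\in(0,1]$) to absorb the term $\eta\ve^{-2}\|\mathbf{P}_0^\perp\widehat{f}\|^2_{L_v^2}$ into the microscopic dissipation $\lambda\ve^{-2}\|\mathbf{P}_0^\perp\widehat{f}\|^2_{L_\gamma^2}$, I obtain for $\eta$ sufficiently small
\begin{equation*}
\ddt\mathscr{E}_\xi(\widehat{f}) + \frac{c}{\ve^2}\|\mathbf{P}_0^\perp\widehat{f}\|^2_{L_\gamma^2} + c\frac{|\xi|^2}{1+|\xi|^2}\|\mathbf{P}_0\widehat{f}\|^2_{L_v^2} \lesssim \ve^2\|\widehat{w}\|^2_{(L_\gamma^2)'} + |\operatorname{Re}\langle \widehat{w},\widehat{f}\rangle_{L_v^2}|.
\end{equation*}

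For part (1), I set $\widehat{w}=0$ and integrate in time, yielding pointwise in $\xi$ a bound of the left-hand side by $C\|\widehat{f}_{\mathrm{in}}(\xi)\|^2_{L_v^2}$. I then multiply by $\langle\xi\rangle^{2m}$, integrate over $\xi$, and take the pointwise-in-$\xi$ supremum over $t\in[0,T]$ in the first piece: the $\widetilde{L}^\infty_T H^m_x L^2_v$ and $\ve^{-1}\|\mathbf{P}_0^\perp\cdot\|_{L^2_T H^m_x L_\gamma^2}$ norms appear directly, while the identity
\begin{equation*}
\langle\xi\rangle^{2m}\frac{|\xi|^2}{1+|\xi|^2}=(1+|\xi|^2)^{m-1}|\xi|^2\geqslant|\xi|^{2k},\qquad 1\leqslant k\leqslant m,
\end{equation*}
valid for $m\geqslant 1$, gives the $L^2_T \dot H^m_x L_v^2$ control of the macroscopic part (lower-order homogeneous derivatives in $\dot H^m_x$ being controlled in the same way).

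Part (2) follows exactly the same scheme with $\widehat{w}=\widehat{S}$ and $\widehat{f}(0,\xi)=0$. The decisive point is to exploit the assumption $\mathbf{P}_0 S=0$: since $\mathbf{P}_0$ is self-adjoint in $L^2_v$,
\begin{equation*}
|\operatorname{Re}\langle\widehat{S},\widehat{f}\rangle_{L_v^2}| = |\operatorname{Re}\langle\widehat{S},\mathbf{P}_0^\perp\widehat{f}\rangle_{L_v^2}| \leqslant \|\widehat{S}\|_{(L_\gamma^2)'}\|\mathbf{P}_0^\perp\widehat{f}\|_{L_\gamma^2}\leqslant \tfrac{c}{2\ve^2}\|\mathbf{P}_0^\perp\widehat{f}\|^2_{L_\gamma^2} + C\ve^2\|\widehat{S}\|^2_{(L_\gamma^2)'},
\end{equation*}
which is absorbed once more into the microscopic dissipation. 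Integrating in time and repeating the $\langle\xi\rangle^{2m}$-weighted manipulation from part (1) produces a squared right-hand side $\ve^2\|S\|^2_{L^2_T H^m_x (L_\gamma^2)'}$, whence the claimed factor $\ve$ after taking square roots. The main obstacle is this tight balancing of constants: the $\ve$-gain in (2) hinges simultaneously on $\mathbf{P}_0 S=0$ (to recover the dual norm $(L_\gamma^2)'$ via duality rather than a naive Cauchy--Schwarz in $L^2_v$), on the coercivity of $-\CL$ on $(\Ker\CL)^\perp$ in the $L_\gamma^2$-norm from Lemma \ref{lem-linear-operator-propreties}, and on the norm equivalence in Remark \ref{rmk-norm}; without the first two ingredients one would at best recover a bound of order $\mathcal{O}(1)$ rather than $\mathcal{O}(\ve)$.
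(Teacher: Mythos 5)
Your argument reproduces the paper's own proof almost line for line: form a hypocoercivity energy $\|\widehat f\|_{L_v^2}^2+c\,\ve\,\operatorname{Re}\mathscr B$, combine \eqref{es-mac} and \eqref{es-mic} into the differential inequality \eqref{es-homo-CP}/\eqref{es-inhomo-CP}, use $\PP_0 S=0$ together with duality and Young to close part (2), then weight by $\langle\xi\rangle^{2m}$ and integrate. The only cosmetic difference is that you put the smallness in an explicit auxiliary weight $\eta$ in front of \eqref{es-mac}, whereas the paper absorbs it into the constants $\delta_1,\delta_2,\delta_3$ already built into $\mathscr B$ (Remark \ref{rmk-norm}); you also spell out the elementary pointwise bound $(1+|\xi|^2)^{m-1}|\xi|^2\geqslant|\xi|^{2k}$ needed to pass from the macroscopic dissipation weight to the $\dot H^m_x$ norm for $m>1$, a step the paper leaves implicit. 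No gaps — this is the same route.
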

\begin{proof}
  In order to study the properties of semigroup $\widehat{U}^\ve(t) = e^{t\widehat{\Lambda}^{\ve}(\xi)}$, we study the Cauchy problem \eqref{eq-CP} with $w\equiv 0$. Then its solution can be written as
  \begin{equation*}
    \widehat{f}(t,\xi,v) = \widehat{U}^\ve(t)\widehat{f}_{\mathrm{in}}(\xi,v).
  \end{equation*}
  Noting Remark \ref{rmk-norm}, $\delta_1$, $\delta_2$ and $\delta_3$ are small enough. Combining \eqref{es-mac} with \eqref{es-mic} for $w\equiv 0$ to get for $\lambda_2 = \min\{\lambda,\lambda_1\}$,
  \begin{equation}\label{es-homo-CP}
    \begin{split}
       &\ddt\left\{\|\widehat{f}(t,\xi,\cdot)\|_{L_v^2}^2 + \ve\operatorname{Re}\mathscr{B}[\widehat{f}(\xi),\widehat{f}(\xi)]\right\} \\
       &+ \lambda_2\left\{ \frac{1}{\ve^2}\|\mathbf{P}_0^{\perp}\widehat{f}(t,\xi,\cdot)\|_{L_\gamma^2}^2 + \frac{|\xi|^2}{1 + |\xi|^2}\|\mathbf{P}_0\widehat{f}(\xi,\cdot)\|_{L_v^2}^2\right\} \leqslant 0
    \end{split}
  \end{equation}
   Integrating over $[0,t]$, we can get
  \begin{align*}
    &\|\widehat{f}(t,\xi,\cdot)\|_{L_v^2}^2 + \frac{1}{\varepsilon^2}\int_{0}^{t}\|\mathbf{P}_0^{\perp} \widehat{f}(s,\xi,\cdot)\|_{L_\gamma^2}^2\ud s\\
    &\hspace{2cm}+ \int_{0}^{t}\frac{|\xi|^2}{1 + |\xi|^2}\|\mathbf{P}_0\widehat{f}(s,\xi,\cdot)\|_{L_v^2}^2\ud s \lesssim \|\widehat{f}_{\mathrm{in}}(\xi,\cdot)\|_{L_v^2}^2,
  \end{align*}
  where the equivalence \eqref{eql-norm} is used. Next, we take $L^\infty$ norm in time and then multiply by $(1 + |\xi|^2)^m$  to yield
  \begin{align*}
    &(1 + |\xi|^2)^m\|\widehat{f}(\xi)\|_{L_t^\infty L_v^2}^2 + \frac{(1 + |\xi|^2)^m}{\varepsilon^2}\|\mathbf{P}_0^{\perp} \widehat{f}(\xi)\|_{L_t^2 L_\gamma^2}^2\\
    &\qquad + |\xi|^2(1 + |\xi|^2)^{m-1}\|\mathbf{P}_0\widehat{f}(\xi)\|_{L_t^2 L_v^2}^2 \lesssim (1 + |\xi|^2)^m\|\widehat{f}_{\mathrm{in}}(\xi)\|_{L_v^2}^2,
  \end{align*}
  which implies the first estimate.

  For the second one, we denote by $h = \displaystyle \int_{0}^{t}U^\ve(t-s)S(s)\ud s$, and hence $\widehat{h}(\xi)$ is the solution of the Cauchy problem
  \begin{equation*}
    \partial_t \widehat{h}(\xi)=\widehat{\Lambda}^{\ve}(\xi)\widehat{h}(\xi) + \widehat{S}(\xi),\quad \widehat{h}(\xi)\mid_{t=0}=0,
  \end{equation*}
  Same as \eqref{es-homo-CP}, we can use \eqref{es-mac}-\eqref{es-mic} to obtain for constant $\lambda_3>0$
  \begin{equation}\label{es-inhomo-CP}
    \begin{split}
       &\ddt\left\{\|\widehat{h}(t,\xi,\cdot)\|_{L_v^2}^2 + \ve\operatorname{Re}\mathscr{B}[\widehat{h}(\xi),\widehat{h}(\xi)]\right\}\\
       &\quad+ \lambda_3\left\{ \frac{1}{\ve^2}\|\mathbf{P}_0^{\perp}\widehat{f}(t,\xi,\cdot)\|_{L_\gamma^2}^2 + \frac{|\xi|^2}{1 + |\xi|^2}\|\mathbf{P}_0\widehat{f}(\xi,\cdot)\|_{L_v^2}^2\right\}\\
       &\hspace{2cm}\leqslant \ve^2\|\widehat{S}(\xi,\cdot)\|_{(L_\gamma^2)'}^2 + |\operatorname{Re}\langle \widehat{S}(\xi,\cdot),\,\widehat{h}(\xi,\cdot)\rangle_{L_v^2}|.
    \end{split}
  \end{equation}
  By duality and $\widehat{S}(\xi)\in \Ker(\CL)^\perp$, one can use a simple Cauchy-Schwarz's inequality to acquire that
  \begin{align*}
    |\operatorname{Re}\langle \widehat{S}(\xi,\cdot),\,\widehat{h}(\xi,\cdot)\rangle_{L_v^2}| \leqslant -\frac{\lambda_3}{2}\dfrac{1}{\varepsilon^2}\|\mathbf{P}_0^{\perp} \widehat{h}(\xi)\|_{L_\gamma^2}^2 + C\ve^2\|\widehat{S}(\xi)\|_{(L_\gamma^2)'}^2,
  \end{align*}
  Gathering the above two estimates and then integrating over $[0,t]$ to get
  \begin{align*}
    \|\widehat{h}(t,\xi,\cdot)\|_{L_v^2}^2 + &\frac{1}{\varepsilon^2}\int_{0}^{t}\|\mathbf{P}_0^{\perp} \widehat{h}(s,\xi,\cdot)\|_{L_\gamma^2}^2\ud s\\
    &+ \int_{0}^{t}\frac{|\xi|^2}{1 + |\xi|^2}\|\mathbf{P}_0\widehat{h}(s,\xi,\cdot)\|_{L_v^2}^2\ud s \lesssim \ve^2\int_{0}^{t}\|\widehat{S}(s,\xi,\cdot)\|_{(L_\gamma^2)'}^2\ud s.
  \end{align*}
  Finally, taking the $L^\infty$ norm in time and then multiplying by $(1 + |\xi|^2)^m$  to yield the second estimate.
\end{proof}
As arguing in \cite{carrapatoso2025navier}, we also have following estimates for the components of $\mathfrak{Q}^{\ve}[f_1,f_2]$ by applying Lemma \ref{lem-semi-hypocoercivity}.
\begin{lemma}\label{lem-bilinear-semi}
   Suppose $f_1,f_2$ are such that $\mathfrak{Q}^{\ve}[f_1,f_2]\in L^2_T H^m_x (L_\gamma^2)^{\prime}$ for some $T>0$. Then the following estimates hold:
   \begin{enumerate}
       \item  Let $m\geqslant 1$, it holds that
       \begin{align*}
           \|\mathfrak{Q}^\varepsilon[f_1, f_2]\|_{\widetilde{L}^\infty_T H^m_x L^2_v} + \|\PP_0\mathfrak{Q}^\varepsilon[f_1, f_2]\|_{L^2_T \dot{H}^m_x L^2_v} + \frac{1}{\varepsilon} \|\PP_0^\perp \mathfrak{Q}^\varepsilon&[f_1, f_2]\|_{L^2_T H^m_x L_\gamma^2} \\
           &\lesssim \|\CQ_{\mathrm{sym}}(f_1, f_2)\|_{L^2_T H^m_x (L_\gamma^2)^{\prime}},
  \end{align*}
  and
  \begin{align*}
          \|\mathfrak{Q}^{\varepsilon,\sharp}[f_1, f_2]\|_{\widetilde{L}^\infty_T H^m_x L^2_v}
    + \frac{1}{\varepsilon} \|\mathfrak{Q}^{\varepsilon,\sharp}[f_1, f_2]\|_{L^2_T H^m_x L_\gamma^2}
    \lesssim \|\CQ_{\mathrm{sym}}(f_1, f_2)\|_{L^2_T H^m_x (L_\gamma^2)^{\prime}}.
 \end{align*}
 \item Let $m\geqslant0$. For any $f \in H_x^m L_v^2$ there holds
 \begin{align*}
  \left\|U^{\varepsilon, \flat}(\cdot) f\right\|_{L_t^2 H_x^{m+1} L_\gamma^2} \lesssim\|f\|_{H_x^m L_v^2}
 \end{align*}
\item Consider $T>0$ and $m \in \mathbb{R}$. For any smooth enough functions $f_1$ and $f_2$, we have
\begin{align*}
    & \left\|\mathfrak{Q}^{\varepsilon, \flat}\left[f_1, f_2\right]\right\|_{\widetilde{L}_T^{\infty} H_x^m L_v^2} \lesssim\left\|\CQ_{\operatorname{sym}}\left(f_1, f_2\right)\right\|_{\widetilde{L}_T^{\infty} H_x^{m-1} L_v^2}, \\
& \left\|\mathfrak{Q}^{\varepsilon, \flat}\left[f_1, f_2\right]\right\|_{\widetilde{L}_T^{\infty} H_x^m L_v^2} \lesssim\left\|\CQ_{\operatorname{sym}}\left(f_1, f_2\right)\right\|_{\widetilde{L}_T^4 H_x^{m-\frac{1}{2}} L_v^2},\\
&\left\|\mathfrak{Q}^{\varepsilon, \flat}\left[f_1, f_2\right]\right\|_{L_T^2 H_x^m L_\gamma^2} \lesssim\left\|\CQ_{\mathrm{sym }}\left(f_1, f_2\right)\right\|_{L_T^2 H_x^{m-1}\left(L_\gamma^2\right)^{\prime}},\\
&\left\|\mathfrak{Q}^{\varepsilon, \sharp}\left[f_1, f_2\right]\right\|_{\widetilde{L}_T^{\infty} H_x^m L_v^2} \lesssim \varepsilon\left\|\CQ_{\operatorname{sym}}\left(f_1, f_2\right)\right\|_{\widetilde{L}_T^{\infty} H_x^m L_v^2}.
\end{align*}
   \end{enumerate}
\end{lemma}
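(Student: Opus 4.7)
My plan is to establish the bounds of Lemma~\ref{lem-bilinear-semi} by feeding the source $S:=\CQ_{\mathrm{sym}}(f_1,f_2)$ into the Duhamel formulas for $\mathfrak{Q}^{\varepsilon}$, $\mathfrak{Q}^{\varepsilon,\flat}$ and $\mathfrak{Q}^{\varepsilon,\sharp}$, and combining the spectral decomposition of Lemma~\ref{lem-spectral-thm} with the hypocoercivity framework of Lemma~\ref{lem-semi-hypocoercivity}. The single structural observation that unlocks everything is
\begin{equation*}
\mathbf{P}_0\,\CQ_{\mathrm{sym}}(f_1,f_2)=0,
\end{equation*}
which follows by testing the symmetrized bilinear operator against the collision invariants $(1,v,|v|^2)\sqrt{\mu_{\mathrm{q}}}$ and using the standard change of variables $(v,v_*)\leftrightarrow(v',v'_*)$ together with \eqref{conservative-law}. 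This places $S$ in $(\operatorname{Ker}\mathcal{L})^{\perp}$, which is exactly the hypothesis of Lemma~\ref{lem-semi-hypocoercivity}(2).

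For part (1), I would apply Lemma~\ref{lem-semi-hypocoercivity}(2) to the Duhamel integral $h^{\varepsilon}(t)=\int_0^t U^{\varepsilon}(t-s)S(s)\,\ud s$; the factor $\varepsilon$ on the right-hand side of that lemma cancels the $\varepsilon^{-1}$ in the definition $\mathfrak{Q}^{\varepsilon}=\varepsilon^{-1}h^{\varepsilon}$. For $\mathfrak{Q}^{\varepsilon,\sharp}$ there is no macroscopic norm on the left, because $\widehat{U}^{\varepsilon,\sharp}$ is complementary to the fluid projectors; the bound then follows from a refined exponential decay $\|\widehat{U}^{\varepsilon,\sharp}(t,\xi)\|_{(L_\gamma^2)^{\prime}\to L_v^2}\lesssim \varepsilon^{-1}e^{-\delta t/\varepsilon^2}$, deduced from Lemma~\ref{lem-spectral-thm}(4) upgraded by the coercivity of $\mathcal{L}$ on $(\operatorname{Ker}\mathcal{L})^{\perp}$ from Lemma~\ref{lem-linear-operator-propreties}, plus Young's convolution inequality in time. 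For parts (2)--(3), since $\widehat{U}^{\varepsilon,\flat}$ is the explicit spectral sum and $\mathrm{Re}\,\lambda_{\star}(\varepsilon\xi)\leqslant-c|\xi|^2$ on the support of $\chi(\varepsilon|\xi|/\kappa)$, with $\mathcal{P}_{\star}(\varepsilon\xi)$ mapping $L_v^2\to L_\gamma^2$ boundedly (being finite-rank onto smooth fluid modes), I obtain the pointwise bound $\|\widehat{U}^{\varepsilon,\flat}(t,\xi)\widehat{f}\|_{L_\gamma^2}\lesssim e^{-c|\xi|^2 t}\|\widehat{f}\|_{L_v^2}$. Squaring and integrating in $t$ produces the factor $|\xi|^{-2}$ needed for part (2), and the expansion $\mathcal{P}_{\star}(\varepsilon\xi)=\mathcal{P}_{\star}^{(0)}+\varepsilon|\xi|\mathcal{P}_{\star}^{(1)}+O((\varepsilon|\xi|)^2)$ provides a factor $\varepsilon|\xi|$ on the subleading terms that absorbs the $\varepsilon^{-1}$ prefactor of $\mathfrak{Q}^{\varepsilon,\flat}$ at the cost of one derivative in $x$, explaining the shift $m\to m-1$ in the first and third estimates of part (3). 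The $\widetilde L^4_T$ bound then follows from the heat-type Strichartz inequality $\|e^{-c|\xi|^2 t}\widehat{f}\|_{L^4_tL^2_\xi}\lesssim \|f\|_{\dot H^{-1/2}_x}$ applied branch by branch, while for $\mathfrak{Q}^{\varepsilon,\sharp}$ the gain of one power of $\varepsilon$ comes from $\int_0^T e^{-\delta(t-s)/(2\varepsilon^2)}\,\ud s\lesssim\varepsilon^2$ against the $\varepsilon^{-1}$ prefactor.

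The main obstacle is establishing the dual-space exponential decay
\begin{equation*}
\|\widehat{U}^{\varepsilon,\sharp}(t,\xi)\|_{(L_\gamma^2)^{\prime}\to L_v^2}\lesssim \varepsilon^{-1}e^{-\delta t/\varepsilon^2},
\end{equation*}
because Lemma~\ref{lem-spectral-thm}(4) only furnishes the $L_v^2\to L_v^2$ version while the source $\CQ_{\mathrm{sym}}$ naturally belongs only to the weaker space $(L_\gamma^2)^{\prime}$. The natural route is to apply a hypocoercivity argument to the Cauchy problem \eqref{eq-CP} restricted to the range of $\mathrm{Id}-\CP$, using the spectral gap of Lemma~\ref{lem-linear-operator-propreties} on $(\operatorname{Ker}\mathcal{L})^{\perp}$ together with a resolvent estimate for $\widehat{\Lambda}(\xi)$ uniform away from the small eigenvalues; once this dual decay is in place, the remaining claims reduce to direct semigroup manipulations in Fourier space.
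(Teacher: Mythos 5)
Your overall strategy matches the paper's intended route (feed $S=\CQ_{\mathrm{sym}}(f_1,f_2)$ into Duhamel, use $\PP_0\CQ_{\mathrm{sym}}=0$ to place it in the range of Lemma~\ref{lem-semi-hypocoercivity}(2), cancel the $\ve^{-1}$ prefactor, and handle $\mathfrak{Q}^{\ve,\flat}$ via the heat-type decay $\mathrm{Re}\,\lambda_\star(\ve\xi)/\ve^2\lesssim -|\xi|^2$ together with the observation that $\CP_\star^{(0)}\CQ_{\mathrm{sym}}=0$ so the first nonzero term in the projector expansion carries $\ve|\xi|$). Parts (2) and (3) of your sketch are essentially correct.

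However, the argument you give for the $\mathfrak{Q}^{\ve,\sharp}$ bounds in part (1) has a genuine gap. You claim
\begin{equation*}
\bigl\|\widehat{U}^{\varepsilon,\sharp}(t,\xi)\bigr\|_{(L_\gamma^2)^{\prime}\to L_v^2}\lesssim \varepsilon^{-1}e^{-\delta t/\varepsilon^2},
\end{equation*}
but this cannot hold: at $t=0$, $\widehat{U}^{\ve,\sharp}(0,\xi)=\operatorname{Id}-\CP(\ve\xi)$, and since $(L_\gamma^2)^{\prime}=L_{-\gamma}^2\not\hookrightarrow L_v^2$ the operator norm on the left is $+\infty$, not $\ve^{-1}$. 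Moreover, even if one grants the bound, the numerology fails. With $\ve^{-1}$ coming from the definition of $\mathfrak{Q}^{\ve,\sharp}$, $\ve^{-1}$ from your claimed operator norm, and $\CQ_{\mathrm{sym}}$ controlled only in $L^2_T(L_\gamma^2)^{\prime}$, Cauchy--Schwarz in time gives $\bigl(\int_0^T e^{-2\delta(t-s)/\ve^2}\,ds\bigr)^{1/2}\lesssim\ve$, so the final bound is $\ve^{-1}\|\CQ_{\mathrm{sym}}\|_{L^2_T(L_\gamma^2)^{\prime}}$, a loss of one power of $\ve$ compared to what is stated in the lemma. The correct form of the regularization estimate must exhibit a singular short-time factor, e.g.\ $\|\widehat{U}^{1,\sharp}(\tau,\eta)\|_{(L_\gamma^2)^{\prime}\to L_v^2}\lesssim\max(\tau^{-\theta},1)e^{-\delta\tau}$ for some $\theta<1/2$, obtained from a factorization/Duhamel iteration $\CL=-\nu+\CK$ using the boundedness of $\CK:L_{-\gamma}^2\to L_v^2$. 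After scaling $\widehat{U}^{\ve,\sharp}(t,\xi)=\widehat{U}^{1,\sharp}(t/\ve^2,\ve\xi)$, the singularity integrates to $\ve^2$ (not $\ve$), restoring the correct power count. Alternatively, and more in line with the paper, one can avoid regularization by running the hypocoercivity energy estimate \eqref{es-mic}--\eqref{es-mac} on the invariant subspace $\operatorname{Ran}(\operatorname{Id}-\CP)$, exploiting that for $|\ve\xi|\leqslant\kappa$ one has $\|\PP_0(\operatorname{Id}-\CP(\ve\xi))\|_{L_v^2\to L_v^2}\lesssim|\ve\xi|$ (so the macroscopic component of anything in $\operatorname{Ran}(\operatorname{Id}-\CP)$ is already small), while for $|\ve\xi|>\kappa$ the uniform spectral gap of Lemma~\ref{lem-spectral-thm}(1) gives the $\ve^{-1}$ gain directly; you gesture at this at the very end but do not carry it out, and it is this argument rather than the $\ve^{-1}$ decay claim that closes the estimate.
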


\begin{remark}
    The arguments in Lemmas \ref{lem-bilinear-ope}-\ref{lem-bilinear-semi} lead to the continuity of $U^{\ve}(t)$ and the convergence of the data term $D_0^{\ve}$ as in Proposition \ref{prop-fixed-point-lem-conditions} (1)-(2). The proof follows the same strategy as in \cite[Sections~5.1-5.2]{carrapatoso2025navier}, and we omit the details here.
\end{remark}
%%%%%%%%%%%%%%%%%%%%%%%%%%%%%%%%%%%%%%

\subsection{\texorpdfstring{Convergence of the Source Term $\CS^{\ve}$}{Convergence of the Source Term S}}\label{subsec-conv-S}
Based on a class of useful nonlinear estimates above, we now prove the statement (3) in Proposition \ref{prop-fixed-point-lem-conditions}. Recall that $\CS^{\ve}$ is given by \eqref{def-diffi-eq-part} and $\mathfrak{Q}^{\ve}$ is decomposed as in \eqref{decom-bi}, we have the following lemma to show $\CS^{\ve}\to 0$ as $\ve\to 0$.

\begin{lemma}
For any interval $I\subset (0,T)$, there exists a nonnegative increasing function $\Phi_1$ such that
\begin{align}\label{convergence-source-bi}
    \|\mathfrak{Q}^{\ve}[\gve,\gve]-\mathfrak{Q}_{\NSF}[\gve,\gve]\|_{\MXe_{I}}\leqslant\ve^{\frac{1}{2}-2\al}\Phi_1\left(\left\|g_{\mathrm{in}}\right\|_{H_x^{\frac{1}{2}} L_v^2}\right) \rightarrow 0, \quad \varepsilon \rightarrow 0,
\end{align}
and
\begin{align}
\|\mathfrak{T}^{\ve}[\gve,\gve,\gve]\|_{\MXe_I}\lesssim\ve^{\frac{1}{2}-\al(\ell+\frac{1}{2})} \left\|g_{\mathrm{in}}\right\|_{H_x^{\frac{1}{2}} L_v^2}^3 \exp \left(2C\left\|g_{\mathrm{in}}\right\|_{H_x^{\frac{1}{2}} L_v^2}^2\right). \label{convergence-source-tri}
\end{align}
\end{lemma}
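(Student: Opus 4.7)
The plan is to invoke the spectral decomposition set up in Section~2, namely
\[
\mathfrak{Q}^{\ve}[\gve,\gve]-\mathfrak{Q}_{\NSF}[\gve,\gve]=\widetilde{\mathfrak{Q}}^{\ve,\flat}_{\NSF}[\gve,\gve]+\mathfrak{Q}^{\ve,\flat}_{\mathrm{wave}}[\gve,\gve]+\mathfrak{Q}^{\ve,\sharp}[\gve,\gve],
\]
together with the analogous splitting $\mathfrak{T}^{\ve}=\widetilde{\mathfrak{T}}^{\ve,\flat}_{\NSF}+\mathfrak{T}^{\ve,\flat}_{\mathrm{wave}}+\mathfrak{T}^{\ve,\sharp}$ (note there is no $\mathfrak{T}_{\NSF}$ to subtract, since the full trilinear source must vanish in the NSF limit). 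Each piece is controlled in $\MXe_I$ by chaining (i) the multilinear source estimates of Lemmas \ref{lem-bilinear-ope} and \ref{lem-tri-estimates}, (ii) the semigroup-Duhamel continuity estimates of Lemmas \ref{lem-semi-hypocoercivity} and \ref{lem-bilinear-semi}, and (iii) the smoothing bounds \eqref{ineq-gve-higher-bounds}--\eqref{ineq-L4-g-ve} on $\gve$, which cost a factor $\ve^{-\al(m-1/2)}$ per Sobolev derivative above $H^{1/2}_x$.

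For the $\sharp$ contributions, the exponential decay of $\widehat{U}^{\ve,\sharp}$ from Lemma \ref{lem-spectral-thm}(4) produces one extra factor of $\ve$: for the bilinear case this is exactly the last estimate of Lemma \ref{lem-bilinear-semi}(3), and for the trilinear case it follows from Lemma \ref{lem-semi-hypocoercivity}(2) since $\CT_{\mathrm{sym}}(\gve,\gve,\gve)$ can be handled modulo its projection onto $\Ker\CL$. For the $\widetilde{\NSF}$ contributions one Taylor-expands the small eigenvalues $\lambda_\star(\ve\xi)/\ve^2=-\nu_\star|\xi|^2+\mathcal{O}(\ve|\xi|^3)$ and the spectral projectors $\CP_\star(\ve\xi)=\CP_\star^{(0)}(\tfrac{\xi}{|\xi|})+\ve|\xi|\CP_\star^{(1)}+\ve^2|\xi|^2\CP_\star^{(2)}$; combined with the low-frequency cutoff $\chi(\ve|\xi|/\kappa)$, this shows that $\widehat{U}^{\ve,\flat}-\widehat{U}_{\NSF}$ is $O(\ve)$ at the price of one spatial derivative. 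In both cases the derivative loss is absorbed by Lemma \ref{lem-smooth-fluid-solution} and the net contributions are of order $\ve^{1-2\al}$ in \eqref{convergence-source-bi} (two factors of $\gve$) and $\ve^{1-\al(\ell+1/2)}$ in \eqref{convergence-source-tri} (three factors, the highest regularity index $\ell+1$ appearing through Lemma \ref{lem-tri-estimates}).

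The core difficulty is the wave piece $\mathfrak{Q}^{\ve,\flat}_{\mathrm{wave}}$ (respectively $\mathfrak{T}^{\ve,\flat}_{\mathrm{wave}}$), whose phase $\pm ic|\xi|(t-s)/\ve$ oscillates rather than decays. Following the strategy of Carrapatoso–Gallagher–Tristani \cite{carrapatoso2025navier}, I would integrate by parts in $s$ against this phase, converting the $\ve^{-1}$ prefactor of $\mathfrak{Q}^{\ve}$ into a net $\ve$-gain; the boundary terms are estimated directly via Lemma \ref{lem-bilinear-ope} (resp.\ Lemma \ref{lem-tri-estimates}), while in the interior term $\partial_s$ now falls on $\CQ_{\mathrm{sym}}(\gve,\gve)$ (resp.\ $\CT_{\mathrm{sym}}(\gve,\gve,\gve)$) and is rewritten via the NSF equation satisfied by $\gve$ as spatial derivatives plus a quadratic nonlinearity. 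Balancing the resulting loss against \eqref{ineq-gve-higher-bounds}--\eqref{ineq-L4-g-ve}, together with the $\ve^\beta$-weighted $H^\ell_x$ components of $\MXe_I$, produces the sharp rates $\ve^{1/2-2\al}$ and $\ve^{1/2-\al(\ell+1/2)}$ stated in the lemma; the polynomial–exponential dependence on $\|g_{\mathrm{in}}\|_{H^{1/2}_x L^2_v}$ inherited from Lemma \ref{lem-smooth-fluid-solution} is packaged into the increasing function $\Phi_1$. The principal obstacle is precisely this bookkeeping in the oscillatory wave piece: each integration by parts gains a factor $\ve$ but spends roughly one $x$-derivative costing $\ve^{-\al}$, so the standing hypotheses $\al<1/4$ and $\beta<1/2$ of Theorem \ref{thm-main} are exactly what force all final exponents to remain positive.
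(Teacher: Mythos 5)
Your treatment of the bilinear difference $\mathfrak{Q}^{\ve}[\gve,\gve]-\mathfrak{Q}_{\NSF}[\gve,\gve]$ matches the paper's approach (which simply defers to Section 5.3 of Carrapatoso--Gallagher--Tristani): the spectral split into $\widetilde{\mathfrak{Q}}^{\ve,\flat}_{\NSF}+\mathfrak{Q}^{\ve,\flat}_{\mathrm{wave}}+\mathfrak{Q}^{\ve,\sharp}$, Taylor expansion of $\lambda_\star(\ve\xi)$ and $\mathcal{P}_\star(\ve\xi)$ for the NSF remainder, exponential damping for the $\sharp$ part, and non-stationary phase for the wave piece to beat the $\ve^{-1}$ prefactor. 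That part of your plan is sound.

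For the trilinear estimate \eqref{convergence-source-tri}, however, you and the paper diverge substantially, and the paper's route is both shorter and more robust. You propose mirroring the bilinear treatment: split $\mathfrak{T}^{\ve}=\widetilde{\mathfrak{T}}^{\ve,\flat}_{\NSF}+\mathfrak{T}^{\ve,\flat}_{\mathrm{wave}}+\mathfrak{T}^{\ve,\sharp}$, Taylor-expand the NSF projectors, and integrate by parts against the wave phase. But $\mathfrak{T}^{\ve}$ carries no $\ve^{-1}$ prefactor, so there is nothing to overcome by oscillation: the only thing needed is a single factor of $\ve$ from the Duhamel integral, and this comes for free because $\CT_{\mathrm{sym}}(\gve,\gve,\gve)=\CT(\gve,\gve,\gve)$ lies entirely in $(\Ker\CL)^{\perp}$ --- its projection onto $\Ker\CL$ is exactly zero, not merely ``handled modulo'' it. The paper therefore applies the hypocoercivity estimate of Lemma \ref{lem-semi-hypocoercivity}(2) directly to the \emph{full} trilinear Duhamel term (no spectral decomposition at all), obtaining at once
\begin{align*}
\|\mathfrak{T}^{\ve}[\gve,\gve,\gve]\|_{\MXe_I}\lesssim \ve\|\CT[\gve,\gve,\gve]\|_{L^2_IH^{\frac{1}{2}}_x(L_\gamma^2)'}+(\ve+\ve^{\frac{3}{2}})\|\CT[\gve,\gve,\gve]\|_{L^2_IH^{\frac{3}{2}}_x(L_\gamma^2)'}+\ve^{\beta}(\ve+\ve^{\frac{3}{2}})\|\CT[\gve,\gve,\gve]\|_{L^2_IH^{\ell}_x(L_\gamma^2)'},
\end{align*}
and then closes each right-hand side with Lemma \ref{lem-tri-estimates} and the $\gve$-bounds \eqref{ineq-lower-bound} and \eqref{ineq-gve-higher-bounds}. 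Your route would also land on the correct rate $\ve^{\frac12-\al(\ell+\frac12)}$ (one can even observe that, since the source is microscopic, the zeroth-order projectors $\mathcal{P}_\star^{(0)}$ annihilate it, so the NSF and wave pieces automatically start at order $\ve|\xi|\mathcal{P}_\star^{(1)}$ without any integration by parts), but it re-derives in a much heavier way what the single observation ``the trilinear source is purely microscopic'' already gives. The moral worth taking away is that the $\ve^{-1}$ in front of $\mathfrak{Q}^{\ve}$ is the sole reason the bilinear term needs the oscillatory machinery; $\mathfrak{T}^{\ve}$ does not have it, so the simpler energy/hypocoercivity argument suffices.
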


\begin{proof}
  Following the approach in \cite[Section 5.3]{carrapatoso2025navier}, we apply  the bilinear estimates in Lemmas \ref{lem-bilinear-ope} and \ref{lem-bilinear-semi}, and derive similar bounds.

  It remains to deal with the trilinear part of the source term. By the definition of the space $\MXe_I$ in \eqref{def-space-MXe}, we can write the norm as
\begin{align*}
    &\|\mathfrak{T}^{\ve}[\gve,\gve,\gve]\|_{\MXe_I}\\
    =&\|\mathfrak{T}^{\ve}[\gve,\gve,\gve]\|_{\widetilde{L}_I^{\infty} H_x^{\frac{1}{2}} L_v^2}+\left\|\mathbf{P}_0 \mathfrak{T}^{\ve}[\gve,\gve,\gve]\right\|_{L_I^2 \dot H_x^{\frac{3}{2}} L_\gamma^2}\\
    &+\tfrac{1}{\sqrt{\varepsilon}}\left\|\mathbf{P}_0^{\perp} \mathfrak{T}^{\ve}[\gve,\gve,\gve]\right\|_{L_I^2 H_x^{\frac{3}{2}} L_\gamma^2}+\varepsilon^\beta\|\mathfrak{T}^{\ve}[\gve,\gve,\gve]\|_{\widetilde{L}_I^{\infty} H_x^{\ell} L_v^2}\\
& +\varepsilon^\beta\left(\left\|\mathbf{P}_0 \mathfrak{T}^{\ve}[\gve,\gve,\gve]\right\|_{L_I^2 \dot H_x^{\ell} L_\gamma^2}+\tfrac{1}{\sqrt{\varepsilon}}\left\|\mathbf{P}_0^{\perp} \mathfrak{T}^{\ve}[\gve,\gve,\gve]\right\|_{L_I^2  H_x^{\ell} L_\gamma^2}\right).
\end{align*}
Since $\CT_{\mathrm{sym}}[\gve,\gve,\gve]=\CT[\gve,\gve,\gve]\in(\KerL)^{\perp}$, from Lemma \ref{lem-semi-hypocoercivity} it implies that
\begin{align}
\begin{aligned}
\|\mathfrak{T}^{\ve}[\gve,\gve,\gve]\|_{\MXe_I}\lesssim& \ve\|\CT[\gve,\gve,\gve]\|_{L^2_IH^{\frac{1}{2}}(L_\gamma^2)^{\prime}}+(\ve+\ve^{\frac{3}{2}})\|\CT[\gve,\gve,\gve]\|_{L^2_IH^{\frac{3}{2}}_x(L_\gamma^2)^{\prime}}\\
 &+\ve^{\beta}(\ve+\ve^{\frac{3}{2}})\|\CT[\gve,\gve,\gve]\|_{L^2_IH^{\ell}_x(L_\gamma^2)^{\prime}}.
\end{aligned}\label{ineq-semi-tri-ker}
\end{align}
Next, we estimate the terms on the right hand side of  \eqref{ineq-semi-tri-ker} one by one. Applying Lemma \ref{lem-tri-estimates}-(1) with $m=\frac{1}{2}$, and setting
\begin{align*}
    (\al_i,\be_i,\ga_i,r_i,\delta_i)=\begin{cases}
       (\infty,\infty,2,\tfrac{1}{2},\ell),\quad i=1,2,\\
       (\infty,2,\infty,\ell,\tfrac{1}{2}),\quad i=3,4.
    \end{cases}
\end{align*}
Then we have
\begin{align}
\begin{aligned}
\|\CT[\gve,\gve,\gve]\|_{L^2_IH_x^{\frac{1}{2}}(L_\gamma^2)^{\prime}}&\lesssim 3\|\gve\|_{\widetilde{L}^{\infty}_IH^{\frac{1}{2}}_xL^2_v}\|\gve\|_{\widetilde{L}^{\infty}_IH^{\frac{3}{2}}_xL^2_v} \|\gve\|_{\widetilde{L}^{2}_TH^{\ell}_x L_\gamma^2}\\
 &\quad+\|\gve\|_{\widetilde{L}^{\infty}_IH^{\frac{1}{2}}_xL^2_v}\|\gve\|_{\widetilde{L}^{2}_IH^{\ell}_xL^2_v}\|\gve\|_{\widetilde{L}^{\infty}_TH^{\frac{3}{2}}_x L_\gamma^2}.
\end{aligned}\label{ineq-semi-tri-ker-right-11st}
\end{align}

Therefore, noting \eqref{es-kerL-L2v-H*v-ineq}, the estimate \eqref{ineq-semi-tri-ker-right-11st} can be further refined as follows:
\begin{align*}
\|\CT[\gve,\gve,\gve]\|_{L^2_IH_x^{\frac{1}{2}}(L_\gamma^2)^{\prime}}&\lesssim\|\gve\|_{\widetilde{L}^{\infty}_IH^{\frac{1}{2}}_xL^2_v}\|\gve\|_{\widetilde{L}^{\infty}_IH^{\frac{3}{2}}_xL^2_v} \|\gve\|_{\widetilde{L}^{2}_TH^{\ell}_x L_\gamma^2}.
\end{align*}
Further to apply \eqref{ineq-lower-bound} and \eqref{ineq-gve-higher-bounds} we get
\begin{align}
  \|\CT[\gve,\gve,\gve]\|_{L^2_IH_x^{\frac{1}{2}}(L_\gamma^2)^{\prime}}\lesssim \ve^{-\al(\ell-\frac{3}{2})-\al}\left\|g_{\text {in }}\right\|_{H_x^{\frac{1}{2}} L_v^2}^3 \exp \left(2C\left\|g_{\text {in }}\right\|_{H_x^{\frac{1}{2}} L_v^2}^2\right).\label{ineq-semi-tri-ker-right-1st}
\end{align}
In the same way, we have that
\begin{align}
&\|\CT[\gve,\gve,\gve]\|_{L^2_IH^{\frac{3}{2}}_x(L_\gamma^2)^{\prime}}\lesssim \ve^{-\al(\ell-\frac{3}{2})-2\al}\left\|g_{\text {in }}\right\|_{H_x^{\frac{1}{2}} L_v^2}^3 \exp \left(2C\left\|g_{\text {in }}\right\|_{H_x^{\frac{1}{2}} L_v^2}^2\right),\\
&\|\CT[\gve,\gve,\gve]\|_{L^2_IH^{\ell}_x(L_\gamma^2)^{\prime}}\lesssim\ve^{-\al(\ell-\frac{3}{2})-\al(\ell+\frac{1}{2})}\left\|g_{\text {in }}\right\|_{H_x^{\frac{1}{2}} L_v^2}^3 \exp \left(2C\left\|g_{\text {in }}\right\|_{H_x^{\frac{1}{2}} L_v^2}^2\right)\label{ineq-semi-tri-ker-right-3rd}.
\end{align}
%For the last one term, let $m=\ell$ in Lemma \ref{lem-tri-estimates}-(1) and suppose that
%\begin{align*}
 %   (\al_i,\be_i,\ga_i,r_i,\delta_i)=\begin{cases}
 %      (\infty,\infty,2,\tfrac{1}{2},\ell),\quad i=1,2,\\
 %      (\infty,\infty,2,\ell,\tfrac{1}{2}),\quad i=3,4.
%    \end{cases}
%\end{align*}
Putting \eqref{ineq-semi-tri-ker-right-1st}-\eqref{ineq-semi-tri-ker-right-3rd} into \eqref{ineq-semi-tri-ker} we get
\begin{align}
\begin{aligned}
     &\|\mathfrak{T}^{\ve}[\gve,\gve,\gve]\|_{\MXe_I}\\
     \lesssim&\Big(\ve^{1-\al(\ell-\frac{1}{2})}+(\ve+\ve^{\frac{3}{2}})(\ve^{-\al(\ell+\frac{1}{2})}+\ve^{\be-\al(2\ell-1)})\Big)\left\|g_{\text {in }}\right\|_{H_x^{\frac{1}{2}} L_v^2}^3 \exp \left(2C\left\|g_{\text {in }}\right\|_{H_x^{\frac{1}{2}} L_v^2}^2\right).
\end{aligned}\nonumber
\end{align}
Recalling the assumptions for $\al<\frac{1}{4},\,\beta<\frac{1}{2}$, and $\ell\in(\frac{3}{2},2]$, which imply $\beta>\al(\ell-\frac{1}{2})$ then
\begin{align*}
1-\al(\ell-\tfrac{1}{2})>0,\quad\tfrac{1}{2}-\al(\ell+\tfrac{1}{2})>0,\quad\tfrac{1}{2}+\be-\al(2\ell-1)>0.
\end{align*}
For $\ve$ sufficiently small, we have
\begin{align*}
\|\mathfrak{T}^{\ve}[\gve,\gve,\gve]\|_{\MXe_I}\lesssim\ve^{\frac{1}{2}-\al(\ell+\frac{1}{2})} \left\|g_{\text {in }}\right\|_{H_x^{\frac{1}{2}} L_v^2}^3 \exp \left(2C\left\|g_{\text {in }}\right\|_{H_x^{\frac{1}{2}} L_v^2}^2\right).
\end{align*}

Finally, combining \eqref{convergence-source-bi} and \eqref{convergence-source-tri}, we conclude the convergence of the source term as stated in Proposition \ref{prop-fixed-point-lem-conditions}-(3). More precisely, there exists a nonnegative increasing function $\Phi$ such that
\begin{align}
\begin{aligned}\|\CS^{\ve}\|_{\MXe_I}&\leqslant\|\mathfrak{Q}^{\ve}[\gve,\gve]-\mathfrak{Q}_{\NSF}[\gve,\gve]\|_{\MXe_{I}}+\|\mathfrak{T}^{\ve}[\gve,\gve,\gve]\|_{\MXe_I}\\
&\lesssim\ve^{\frac{1}{2}-2\al}\Phi( \left\|g_{\text {in }}\right\|_{H_x^{\frac{1}{2}} L_v^2}),
\end{aligned}\nonumber
\end{align}
for $\ell\in(\frac{3}{2},2]$.
\end{proof}

\subsection{\texorpdfstring{Estimates for Linear Term $\CA^{\ve}[f]$}{Estimates for Linear Term A[f]}}
Recall the definition of the operator $\CA^{\ve}$ given in \eqref{def-diffi-eq-part}. In order to prove $\CA^{\ve}$ is continuous in functional space $\mathcal{X}_I^{\varepsilon}$, we show that $\mathfrak{Q}^{\varepsilon}$ and $\mathfrak{T}^{\ve}$ are continuous on $\mathcal{X}_I^{\varepsilon}$.

\begin{lemma}
    Under the assumptions of Theorem \ref{thm-main}, there holds that for $\varepsilon$ small enough and all intervals $I$,
    \begin{equation}\label{ineq-liner-bi-part}
      \begin{split}
         \left\|\mathfrak{Q}^{\varepsilon}[f,\gve]\right\|_{\mathcal{X}_I^{\varepsilon}} \lesssim &\|f\|_{\mathcal{X}_I^{\varepsilon}}\left(\left\|g^{\varepsilon}\right\|_{\widetilde{L}_I^4 H_x^1 L_v^2}+\left\|g^{\varepsilon}\right\|_{L_I^2 H_x^{\frac{3}{2}} L_v^2}\right)\\
         &+\|f\|_{\mathcal{X}_I^{\varepsilon}}\left(\varepsilon^\beta\left\|g^{\varepsilon}\right\|_{\widetilde{L}_I^{\infty} H_x^{\ell} L_v^2}+\varepsilon^\beta\left\|g^{\varepsilon}\right\|_{L_I^2 H_x^{\ell} L_v^2}\right).
      \end{split}
    \end{equation}
    and
    \begin{equation}\label{ineq-liner-tri-part}
      \begin{split}
         &\|\mathfrak{T}^{\ve}[f,\gve,\gve]\|_{\MXe_I}\\\lesssim& \sqrt{\ve}\|f\|_{\MXe_I}\|\gve\|_{\widetilde{L}^{\infty}_IH^{\frac{1}{2}}_xL^2_v}(\|\gve\|_{\widetilde{L}^{\infty}_IH^{\frac{5}{2}}_xL^2_v}+\|\gve\|_{L^2_IH^{\frac{7}{2}}_xL^2_v})\\
 &\quad+\sqrt{\ve}\|f\|_{\MXe_I}(\|\gve\|_{L^2_IH^{\frac{3}{2}}_xL^2_v}\|\gve\|_{\widetilde{L}^{\infty}_IH^{\frac{5}{2}}_xL^2_v}+\|\gve\|_{L^2_IH^{\ell}_xL^2_v}^2)\\
  &\quad+\sqrt{\ve}\|f\|_{\MXe_I}\|\gve\|_{\widetilde{L}^{\infty}_IH^{\ell}_xL^2_v}(\|\gve\|_{L^2_IH^{\ell+1}_xL^2_v}+\|\gve\|_{\widetilde{L}^{\infty}_IH^{\ell}_xL^2_v})\\
  &\quad+\sqrt{\ve} \|f\|_{\MXe_I}\|\gve\|_{\widetilde{L}^{\infty}_IH^1_xL^2_v}^2.
      \end{split}
    \end{equation}
\end{lemma}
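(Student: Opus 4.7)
The strategy for both estimates is uniform: use the semigroup bounds of Lemma \ref{lem-semi-hypocoercivity} and Lemma \ref{lem-bilinear-semi} to reduce each of the six pieces of the $\MXe_I$-norm to $L^2_I H^m_x(L_\gamma^2)'$-norms of the underlying bilinear $\CQ_{\mathrm{sym}}(f,\gve)$ and trilinear $\CT_{\mathrm{sym}}(f,\gve,\gve)$ at the three regularity levels $m\in\{\tfrac{1}{2},\tfrac{3}{2},\ell\}$, and then apply the product estimates of Lemma \ref{lem-bilinear-ope} and Lemma \ref{lem-tri-estimates} with carefully tuned Sobolev and time-integrability exponents, so that the $f$-factor is directly bounded by $\|f\|_{\MXe_I}$ and the $\gve$-factors match exactly the norms on the RHS of \eqref{ineq-liner-bi-part}--\eqref{ineq-liner-tri-part}. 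Throughout, since $\gve=\PP_0\gve$, the equivalence \eqref{es-kerL-L2v-H*v-ineq} allows us to exchange $L^2_v$ and $L_\gamma^2$ freely on $\gve$.

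For \eqref{ineq-liner-bi-part}, I apply Lemma \ref{lem-bilinear-semi}-(1) to $\mathfrak{Q}^{\ve}[f,\gve]$ at $m\in\{\tfrac{1}{2},\tfrac{3}{2},\ell\}$, which yields
\begin{align*}
\|\mathfrak{Q}^{\ve}[f,\gve]\|_{\MXe_I}\lesssim \|\CQ_{\mathrm{sym}}(f,\gve)\|_{L^2_IH^{1/2}_x(L_\gamma^2)'}+\|\CQ_{\mathrm{sym}}(f,\gve)\|_{L^2_IH^{3/2}_x(L_\gamma^2)'}+\ve^\beta\|\CQ_{\mathrm{sym}}(f,\gve)\|_{L^2_IH^\ell_x(L_\gamma^2)'},
\end{align*}
where the microscopic contributions actually gain a free factor $\sqrt{\ve}$ that we discard. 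Each term is then bounded via Lemma \ref{lem-bilinear-ope}-(1): for example, at $m=\tfrac{1}{2}$, the choice $(p_1,q_1,r_1)=(4,4,1)$ produces $\|f\|_{\widetilde L^4_IH^1_xL^2_v}\|\gve\|_{\widetilde L^4_IH^1_xL^2_v}$, and Sobolev interpolation gives $\|f\|_{\widetilde L^4_IH^1_xL^2_v}\lesssim \|f\|_{\MXe_I}$, matching the first summand of \eqref{ineq-liner-bi-part}; the alternative $(p_2,q_2,r_2)=(\infty,2,\tfrac{3}{2}+\delta)$ gives $\|f\|_{\widetilde L^\infty_IH^{1/2}_xL^2_v}\|\gve\|_{L^2_IH^{3/2}_xL^2_v}$, the second summand. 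The $L^2_IH^{3/2}$ level is treated in the same way, and the $L^2_IH^\ell$ level produces the $\ve^\beta$-weighted $H^\ell$ terms.

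For \eqref{ineq-liner-tri-part}, since $\CT_{\mathrm{sym}}(f,\gve,\gve)\in(\Ker\,\CL)^\perp$ by conservation of collision invariants, Lemma \ref{lem-semi-hypocoercivity}-(2) applies at the same three levels and, after incorporating the $\tfrac{1}{\sqrt{\ve}}$-factor on the microscopic portion of the $\MXe_I$-norm, gives
\begin{align*}
\|\mathfrak{T}^{\ve}[f,\gve,\gve]\|_{\MXe_I}\lesssim \ve\|\CT(f,\gve,\gve)\|_{L^2_IH^{1/2}_x(L_\gamma^2)'}+\sqrt{\ve}\,\|\CT(f,\gve,\gve)\|_{L^2_IH^{3/2}_x(L_\gamma^2)'}+\ve^{\frac{1}{2}+\beta}\|\CT(f,\gve,\gve)\|_{L^2_IH^{\ell}_x(L_\gamma^2)'}.
\end{align*}
This explains the uniform $\sqrt{\ve}$-prefactor visible in every term of \eqref{ineq-liner-tri-part}. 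I then invoke Lemma \ref{lem-tri-estimates} at each level. For each of the four families of products it generates, I pick $(r_i,\delta_i)$ slightly larger than $\tfrac{3}{2}$ in some terms (so that the "worst" derivative stays at level $m$ rather than jumping to $m+1$ or $m+2$) and slightly smaller than $\tfrac{3}{2}$ in others (producing the $H^{5/2}$, $H^{7/2}$, $H^{\ell+1}$ norms of $\gve$ visible on the RHS), together with time-integrability triples $(\alpha_i,\beta_i,\gamma_i)\in\{(\infty,\infty,2),(\infty,2,\infty),(2,\infty,\infty)\}$, all summing to $\tfrac{1}{2}$. The full symmetry of $\CT_{\mathrm{sym}}$ lets me place $f$ in whichever slot is most convenient; $f$ always appears in one of $\widetilde L^\infty_IH^{1/2}_xL^2_v$, $L^2_IH^{3/2}_xL^2_v$ or (at worst) $L^2_IH^\ell_xL^2_v$, each controlled by $\|f\|_{\MXe_I}$ up to an $\ve^{-\beta}$ loss. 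A careful enumeration then recovers the four lines of \eqref{ineq-liner-tri-part}.

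The main obstacle is book-keeping: for each of the six pieces of $\MXe_I$ and each of the several possible positions of $f$ in the two nonlinear forms, one must simultaneously arrange the Sobolev exponents $r_i,\delta_i\neq \tfrac{3}{2}$ and the time-integrability exponents so that (i) the $f$-factor is directly bounded by $\|f\|_{\MXe_I}$ after interpolation, (ii) the $\gve$-factors coincide exactly with the prescribed norms on the RHS, and (iii) the resulting $\ve$-power is non-negative. Point (iii) is the genuinely delicate issue at the $H^\ell$-level, where the $\ve^\beta$-weight on the highest-regularity portion of $\|f\|_{\MXe_I}$, the $\ve^{\frac{1}{2}+\beta}$-saving from the semigroup, and the $\ve^{-\alpha(m-\frac{1}{2})}$-blow-up of $\gve$ in high Sobolev norms from Lemma \ref{lem-smooth-fluid-solution} must conspire to produce a non-negative $\ve$-exponent; the hypotheses $\alpha<\tfrac{1}{4}$, $\beta<\tfrac{1}{2}$ and $\ell\in(\tfrac{3}{2},2]$ are precisely what is needed to ensure this balance. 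Once the parameters are identified, the estimates reduce to a mechanical application of Lemma \ref{lem-bilinear-ope}, Lemma \ref{lem-tri-estimates} and Sobolev interpolation, together with \eqref{es-kerL-L2v-H*v-ineq} on the macroscopic component of $\gve$.
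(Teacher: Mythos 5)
Your high-level plan -- reduce the $\MXe_I$-norm of $\mathfrak{T}^{\ve}[f,g^\ve,g^\ve]$ via the hypocoercivity estimate Lemma \ref{lem-semi-hypocoercivity}-(2) (valid because $\CT_{\mathrm{sym}}(f,g^\ve,g^\ve)\in(\operatorname{Ker}\CL)^\perp$), then control the resulting $L^2_I H^m_x(L_\gamma^2)'$-norms of $\CT_{\mathrm{sym}}$ by the product estimates of Lemma \ref{lem-tri-estimates} -- is exactly the structure of the paper's proof, and the same remark applies to the bilinear half via Lemma \ref{lem-bilinear-semi} and Lemma \ref{lem-bilinear-ope}. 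However, there are two concrete gaps.

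First, the semigroup bound's RHS carries \emph{inhomogeneous} $H^m_x(L_\gamma^2)'$-norms of $\CT_{\mathrm{sym}}$, whereas $\MXe_I$ controls $\PP_0 f$ only in the \emph{homogeneous} spaces $\dot H^{3/2}_x$ and $\dot H^\ell_x$. After splitting $H^m_x\sim L^2_x+\dot H^m_x$, the $L^2_x$-level piece (the paper's $\CA_0$) cannot be handled by Lemma \ref{lem-tri-estimates} in a way that both produces the target factor $\|g^\ve\|^2_{\widetilde L^\infty_I H^1_x L^2_v}$ and places $f$ in an $\MXe_I$-controllable slot: the lemma forces a factor $\|f\|_{\widetilde L^{\alpha}_I H^{m_1}_x L^2_v}$ which at $m=0$ typically lands at a regularity that $\MXe_I$ does not directly control for $\PP_0 f$. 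The paper instead treats $\CA_0$ by a separate argument -- H\"older's inequality together with the Sobolev embedding $H^1_x\hookrightarrow L^6_x$ and the interpolation $\|\nabla_x h\|_{L^2_x}\leqslant\||D_x|^{1/2}h\|_{L^2_x}^{1/2}\||D_x|^{3/2}h\|_{L^2_x}^{1/2}$ -- precisely to reach $\|\nabla_x\PP_0 f\|_{L^2_I L^2_x L^2_\gamma}$, which \emph{is} controlled by $\MXe_I$. Your plan of "applying Lemma \ref{lem-tri-estimates} at each level $m\in\{\tfrac12,\tfrac32,\ell\}$" never mentions $m=0$ and omits this step.

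Second, the explanation you give for the $\sqrt\ve$ prefactor in \eqref{ineq-liner-tri-part} is incorrect. Lemma \ref{lem-semi-hypocoercivity}-(2) applied to each piece of $\MXe_I$ yields a prefactor $\ve$ (the $\tfrac1{\sqrt\ve}\PP_0^\perp$ piece actually gives the smaller $\ve^{3/2}$), so the semigroup reduction gives coefficients $\ve(1+\ve^{\beta})(1+\ve^{1/2})$ at each level, not $\sqrt\ve$. The $\sqrt\ve$ in \eqref{ineq-liner-tri-part} is generated only after invoking Lemma \ref{lem-tri-estimates}: one must pay $\|f\|_{\widetilde L^\infty_I H^\ell_x L^2_v}\lesssim\ve^{-\beta}\|f\|_{\MXe_I}$ in some slots, and the combined factor $\ve\cdot\ve^{-\beta}=\ve^{1-\beta}\lesssim\sqrt\ve$ uses $\beta<\tfrac12$. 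Your intermediate bound with a $\sqrt\ve$ coefficient at level $3/2$ is a valid but weaker overestimate, and the reasoning behind it conflates the semigroup gain with the $f$-factor loss; since you call the subsequent exponent bookkeeping "mechanical," this is precisely the point where the hypotheses $\alpha<\tfrac14$, $\beta<\tfrac12$, $\ell\in(\tfrac32,2]$ do the real work and cannot be glossed over.
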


\begin{proof}
  Based on Lemma \ref{lem-bilinear-ope} and the second statement in Lemma \ref{lem-semi-hypocoercivity}, we can obtain \eqref{ineq-liner-bi-part} by a similar way to \cite[Section 5.4]{carrapatoso2025navier}.

  It remains to derive the continuity estimate of the trilinear term $\mathfrak{T}^{\ve}[f,\gve,\gve]$. By Lemma \ref{lem-semi-hypocoercivity} and the definition of the symmetric form \eqref{trilinear-sym} of the trilinear operator $\mathfrak{T}^{\ve}$, we have
\begin{align}
\begin{aligned}
&\|\mathfrak{T}^{\ve}[f,\gve,\gve]\|_{\MXe_I}\\
\lesssim&\, \ve\|\CT_{\mathrm{sym}}[f,\gve,\gve]\|_{L^2_IH^{\frac{1}{2}_x}(L_\gamma^2)^{\prime}}+(\ve+\ve^{\frac{3}{2}})\|\CT_{\mathrm{sym}}[f,\gve,\gve]\|_{L^2_IH^{\frac{3}{2}}_x(L_\gamma^2)^{\prime}}\\
 &+\quad\ve^{\beta}(\ve+\ve^{\frac{3}{2}})\|\CT_{\mathrm{sym}}[f,\gve,\gve]\|_{L^2_IH^{\ell}_x(L_\gamma^2)^{\prime}}\\
 \lesssim&\,(1+\ve^{\beta})(\ve+\ve^{\frac{3}{2}})\CA_0+\ve(\CA_1+\CA_2)+(\ve+\ve^{\frac{3}{2}})(\CA_3+\CA_4+\CA_5)\\
 &\quad+\ve^{\beta}(\ve+\ve^{\frac{3}{2}})(\CA_6+\CA_7+\CA_8),
\end{aligned}\label{ineq-semi-tri-sym-f-gve-gve}
\end{align}
where
\begin{align*}
&\CA_0:=\|\CT[f,\gve,\gve]\|_{L^2_IL^2_x(L_\gamma^2)^{\prime}}+\|\CT[\gve,f,\gve]\|_{L^2_IL^2_x(L_\gamma^2)^{\prime}}+\|\CT[\gve,\gve,f]\|_{L^2_IL^2_x(L_\gamma^2)^{\prime}}\\
    &\CA_1:=\|\CT[f,\gve,\gve]\|_{L^2_I\dot H^{\frac{1}{2}}_x(L_\gamma^2)^{\prime}}+\|\CT[\gve,f,\gve]\|_{L^2_I\dot H^{\frac{1}{2}}_x(L_\gamma^2)^{\prime}},\quad \CA_2:=\|\CT[\gve,\gve,f]\|_{L^2_I\dot H^{\frac{1}{2}}_x(L_\gamma^2)^{\prime}},\\
    &\CA_3:=\|\CT[f,\gve,\gve]\|_{L^2_I\dot H^{\frac{3}{2}}_x(L_\gamma^2)^{\prime}}\quad \CA_4:=\|\CT[\gve,f,\gve]\|_{L^2_I\dot H^{\frac{3}{2}}_x(L_\gamma^2)^{\prime}},\quad \CA_5:=\|\CT[\gve,\gve,f]\|_{L^2_I\dot H^{\frac{3}{2}}_x(L_\gamma^2)^{\prime}},\\
    &\CA_6:=\|\CT[f,\gve,\gve]\|_{L^2_I\dot H^{\ell}_x(L_\gamma^2)^{\prime}},\quad \CA_7:=\|\CT[\gve,f,\gve]\|_{L^2_I\dot H^{\ell}_x(L_\gamma^2)^{\prime}},\quad \CA_8:=\|\CT[\gve,\gve,f]\|_{L^2_I\dot H^{\ell}_x(L_\gamma^2)^{\prime}}.
\end{align*}
For the zeroth order term $\CA_0$ in spatial variable $x$, using \eqref{es-T}, Hölder' s inequality and the Grgliardo-Nienberg inequality, we obtain that for all $t\in I$,
\begin{align}
\begin{aligned}
  &\|\CT[f,\gve,\gve](t)\|_{L^2_x(L_\gamma^2)^{\prime}}\\
  \lesssim&\Big(\int_{\BR}\big(\|f(t)\|_{L^2_v}\|\gve(t)\|_{L^2_{\ga}}+\|f(t)\|_{L^2_{\ga}}\|\gve(t)\|_{L^2_v}\big)^2\|\gve(t)\|_{L^2_v}^2\ud x\Big)^{\frac{1}{2}}\\
    \lesssim&\Big(\|f(t)\|_{L^6_xL^2_v}\|\gve(t)\|_{L^6_xL^2_{\ga}}+\|f(t)\|_{L^6_xL^2_{\ga}}\|\gve(t)\|_{L^6_xL^2_v}\Big)\|\gve(t)\|_{L^6_xL^2_v}\\
    \lesssim&\|\nabla_xf(t)\|_{L^2_xL^2_{\ga}}\|\gve(t)\|_{H^1_xL^2_{v}}^2.
\end{aligned}\nonumber
\end{align}
Moreover, by using Hölder's inequality once again, we have
\begin{align}\label{A0-fgg-es}
\begin{aligned}
   \|\CT[f,\gve,\gve]\|_{L^2_IL^2_x(L_\gamma^2)^{\prime}}&\lesssim\Big(\|\nabla_x\PP_0f\|_{L^2_IL^2_xL^2_{\ga}}+\|\PP_0^{\perp}f\|_{L^2_IH^1_xL^2_{\ga}}\Big)\|\gve\|_{\widetilde{L}^{\infty}_IH^1_xL^2_v}\\
 &\lesssim(1+\sqrt{\ve})\|f\|_{\MXe_I}\|\gve\|_{\widetilde{L}^{\infty}_IH^1_xL^2_v}^2.
\end{aligned}
\end{align}
Similarly, it have that
\begin{align}
  \|\CT[\gve,f,\gve]\|_{L^2_IL^2_x(L_\gamma^2)^{\prime}}&\lesssim(1+\sqrt{\ve})\|f\|_{\MXe_I}\|\gve\|_{\widetilde{L}^{\infty}_IH^1_xL^2_v}^2\\
   \|\CT[\gve,\gve,f]\|_{L^2_IL^2_x(L_\gamma^2)^{\prime}}&\lesssim(1+\sqrt{\ve})\|f\|_{\MXe_I}\|\gve\|_{\widetilde{L}^{\infty}_IH^1_xL^2_v}^2.\label{A0-ggf-es}
\end{align}
Combining \eqref{A0-fgg-es}-\eqref{A0-ggf-es}, $\CA_0$ can be controlled as
\begin{align*}
  \CA_0& \lesssim(1+\sqrt{\ve})\|f\|_{\MXe_I}\|\gve\|_{\widetilde{L}^{\infty}_IH^1_xL^2_v}^2.
\end{align*}

We continue with $\CA_1,\cdots,\CA_8$ by applying Lemma \ref{lem-tri-estimates}-(1) with $m=\frac{1}{2}$ and choosing
\begin{align}
    (\al_j,\beta_j,\ga_j,r_j,\delta_j)=\begin{cases}
        (\infty,\infty,2,\ell,\ell),\quad j=1,2,3,\\
        (\infty,2,\infty,\ell,\ell),\quad j=4.
    \end{cases}\label{al-j-assume}
\end{align}
Under the choices \eqref{al-j-assume}, the term $\CA_1$ can be bounded as
\begin{align*}
    \CA_1\lesssim&\,3\|f\|_{\widetilde{L}^{\infty}_I\dot H^{\frac{1}{2}}_xL^2_v}\|\gve\|_{\widetilde{L}^{\infty}_I\dot H^{\ell}_xL^2_v}\|\gve\|_{\widetilde{L}^{2}_I\dot H^{\ell}_x L_\gamma^2}\\
    &+3\|f\|_{\widetilde{L}^{\infty}_I\dot H^{\ell}_xL^2_v}\|\gve\|_{\widetilde{L}^{\infty}_I\dot H^{\frac{1}{2}}_xL^2_v}\|\gve\|_{\widetilde{L}^{2}_I\dot H^{\ell}_x L_\gamma^2}\\
    &+\|f\|_{\widetilde{L}^{\infty}_I\dot H^{\ell}_xL^2_v}\|\gve\|_{\widetilde{L}^{2}_I\dot H^{\ell}_xL^2_v}\|\gve\|_{\widetilde{L}^{\infty}_I\dot H^{\frac{1}{2}}_x L_\gamma^2}\\
    &+\|f\|_{\widetilde{L}^{2}_I\dot H^{\ell}_xL^2_v}\|\gve\|_{\widetilde{L}^{\infty}_I\dot H^{\ell}_xL^2_v}\|\gve\|_{\widetilde{L}^{\infty}_I\dot H^{\frac{1}{2}}_x L_\gamma^2}.
\end{align*}
From \eqref{es-kerL-L2v-H*v-ineq} and Sobolev embedding $L_\gamma^2\hookrightarrow L^2_v$, we further get
\begin{align*}
  \CA_1&\lesssim 3\|f\|_{\MXe_I}\|\gve\|_{\widetilde{L}^{\infty}_IH^{\ell}_xL^2_v}\|\gve\|_{\widetilde{L}^{2}_IH^{\ell}_x L_\gamma^2}+5\|\PP_0f\|_{\widetilde{L}^{\infty}_I\dot H^{\ell}_x L_\gamma^2}\|\gve\|_{\widetilde{L}^{2}_IH^{\ell}_xL^2_v}\|\gve\|_{\widetilde{L}^{\infty}_IH^{\frac{1}{2}}_x L_\gamma^2}\\
  &\quad+5\|\PP_0^{\perp}f\|_{\widetilde{L}^{\infty}_I\dot H^{\ell}_x L_\gamma^2}\|\gve\|_{\widetilde{L}^{2}_IH^{\ell}_xL^2_v}\|\gve\|_{\widetilde{L}^{\infty}_IH^{\frac{1}{2}}_x L_\gamma^2}\\
  &\lesssim(1+\ve^{-\beta}(1+\sqrt{\ve}))\|f\|_{\MXe_I}\|\gve\|_{\widetilde{L}^{2}_IH^{\ell}_xL^2_v}\|\gve\|_{\widetilde{L}^{\infty}_IH^{\frac{1}{2}}_xL^2_v}.
\end{align*}
In the same way, for term $\CA_2$, we choose
\begin{align*}
(\al_k,\be_k,\ga_k,r_k,\delta_k)=\begin{cases}
        (\infty,\infty,2,\ell,\ell),\quad k=1,2,3,\\
        (\infty,\infty,2,\tfrac{1}{2},\ell),\quad k=4.
    \end{cases}
\end{align*}
Then, it holds that
\begin{align*}
    \CA_2&\lesssim3\|\gve\|_{\widetilde{L}^{\infty}_I\dot H^{\frac{1}{2}}_xL^2_v}\|\gve\|_{\widetilde{L}^{\infty}_I\dot H^{\ell}_xL^2_v}\Big(\|\PP_0f\|_{L^2_I\dot H^{\ell}_x L_\gamma^2}+\|\PP_0^{\perp}f\|_{L^2_I\dot H^{\ell}_x L_\gamma^2}\Big)\\
    &\quad+\|\gve\|_{\widetilde{L}^{\infty}_I\dot H^{\frac{1}{2}}_xL^2_v}\|\gve\|_{\widetilde{L}^{\infty}_I\dot H^{\ell}_xL^2_v}\Big(\|\PP_0f\|_{L^2_I\dot H^{\frac{3}{2}}_x L_\gamma^2}+\|\PP_0^{\perp}f\|_{L^2_I\dot H^{\frac{3}{2}}_x L_\gamma^2}\Big)\\
    &\lesssim(\ve^{-\be}(1+\sqrt{\ve})+1+\sqrt{\ve})\|f\|_{\MXe_I}\|\gve\|_{\widetilde{L}^{\infty}_IH^{\frac{1}{2}}_xL^2_v}\|\gve\|_{\widetilde{L}^{\infty}_IH^{\ell}_xL^2_v}.
\end{align*}
Combining term $\CA_1$ and $\CA_2$ and by $\be<\frac{1}{2}$, we obtain
\begin{align}
\begin{aligned}
\ve(\CA_1+\CA_2)
    &\lesssim\ve(\ve^{-\be}(1+\sqrt{\ve})+1+\sqrt{\ve})\|f\|_{\MXe_I}\|\gve\|_{\widetilde{L}^{\infty}_IH^{\frac{1}{2}}_xL^2_v}\|\gve\|_{\widetilde{L}^{\infty}_IH^{\ell}_xL^2_v}\\
    &\lesssim\sqrt{\ve}\|f\|_{\MXe_I}\|\gve\|_{\widetilde{L}^{\infty}_IH^{\frac{1}{2}}_xL^2_v}\|\gve\|_{\widetilde{L}^{\infty}_IH^{\ell}_xL^2_v}.
\end{aligned}\label{ineq-I1-I2}
\end{align}
For term $\CA_3$, we repeat above progress under some suitable choices of $(\al_i,\be_i,\ga_i,r_i,\delta_i),\,i=1,2,3,4$. Then it holds that
\begin{align*}
    \CA_3&\lesssim 2(1+\sqrt{\ve})\|f\|_{\MXe_I}\|\gve\|_{\widetilde{L}^{\infty}_IH^{\frac{1}{2}}_xL^2_v}^2+2\|f\|_{\MXe_I}\|\gve\|_{L^2_IH^{\frac{7}{2}}_xL^2_v}\|\gve\|_{\widetilde{L}^{\infty}_IH^{\frac{1}{2}}_xL^2_v},\\
    \CA_4&\lesssim \|\gve\|_{L^2_IH^{\frac{5}{2}}_xL^2_v}\|f\|_{\MXe_I}\|\gve\|_{\widetilde{L}^{\infty}_IH^{\frac{1}{2}}_xL^2_v}+\|\gve\|_{L^2_IH^{\ell}_xL^2_v}^2(1+\sqrt{\ve})\|f\|_{\MXe_I}\\
    &\quad+\|\gve\|_{\widetilde{L}^{\infty}_IH^{\frac{5}{2}}_xL^2_v}\|f\|_{\MXe_I}\|\gve\|_{L^2_IH^{\frac{3}{2}}_xL^2_v}+\|\gve\|_{\widetilde{L}^{\infty}_IH^{\frac{1}{2}}_xL^2_v}\|f\|_{\MXe_I}\|\gve\|_{L^2_IH^{\frac{7}{2}}_xL^2_v},
\end{align*}
For term $\CA_5$, we use the decomposition $f=\PP_0f+\PP_0^{\perp}f$, then, it holds that
\begin{align*}
    \CA_5&\lesssim \|\CT[\gve,\gve,\PP_0f]\|_{L^2_I\dot H^{\frac{3}{2}}_x(L_\gamma^2)^{\prime}}+\|\CT[\gve,\gve,\PP_0^{\perp}f]\|_{L^2_I\dot H^{\frac{3}{2}}_x(L_\gamma^2)^{\prime}}\\
    &=:\CA_{51}+\CA_{52}.
\end{align*}
Similarly, same as in \eqref{es-kerL-L2v-H*v-ineq}, it have that $\|\PP_0f\|_{L_\gamma^2}\lesssim\|f\|_{L^2_v}$. Then we get
\begin{align*}
    \CA_{51}&\lesssim(\|\gve\|_{\widetilde{L}^{\infty}_I\dot H^{\frac{5}{2}}_xL^2_v}+\|\gve\|_{\widetilde{L}^{\infty}_I\dot H^{\ell}_xL^2_v})\|\gve\|_{L^2_I\dot H^{\frac{3}{2}}_xL^2_v}\|\PP_0f\|_{\widetilde{L}^{\infty}_I\dot H^{\frac{1}{2}}_x L_\gamma^2}\\
    &\quad+(\|\gve\|_{L^2_I\dot H^{\frac{5}{2}}_xL^2_v}\|\PP_0f\|_{\widetilde{L}^{\infty}_I\dot H^{\frac{1}{2}}_x L_\gamma^2}+\|\gve\|_{\widetilde{L}^{\infty}_I\dot H^{\ell}_xL^2_v}\|\PP_0f\|_{L^2_I\dot H^{\frac{3}{2}}_x L_\gamma^2})\|\gve\|_{\widetilde{L}^{\infty}_I\dot H^{\ell}_xL^2_v}\\
    &\lesssim(\|\gve\|_{\widetilde{L}^{\infty}_IH^{\frac{5}{2}}_xL^2_v}+\|\gve\|_{\widetilde{L}^{\infty}_IH^{\ell}_xL^2_v})\|\gve\|_{L^2_IH^{\frac{3}{2}}_xL^2_v}\|f\|_{\MXe_I}\\
    &\quad+(\|\gve\|_{L^2_IH^{\frac{5}{2}}_xL^2_v}+\|\gve\|_{\widetilde{L}^{\infty}_IH^{\ell}_xL^2_v})\|\gve\|_{\widetilde{L}^{\infty}_IH^{\ell}_xL^2_v}\|f\|_{\MXe_I},\\
    \CA_{52}&\lesssim\|\gve\|_{\widetilde{L}^{\infty}_I\dot H^{\frac{5}{2}}_xL^2_v}\|\gve\|_{\widetilde{L}^{\infty}_I\dot H^{\frac{1}{2}}_xL^2_v}(2\|\PP_0^{\perp}f\|_{L^2_I\dot H^{\ell}_x L_\gamma^2}+\|\PP_0^{\perp}f\|_{L^2_I\dot H^{\frac{3}{2}}_x L_\gamma^2})\\
    &\quad+\|\gve\|_{\widetilde{L}^{\infty}_I\dot H^{\ell}_xL^2_v}^2\|\PP_0^{\perp}f\|_{L^2_I\dot H^{\frac{3}{2}}_x L_\gamma^2}\\
    &\lesssim((\ve^{-\be}+1)\|\gve\|_{\widetilde{L}^{\infty}_IH^{\frac{5}{2}}_xL^2_v}\|\gve\|_{\widetilde{L}^{\infty}_IH^{\frac{1}{2}}_xL^2_v}+\|\gve\|_{\widetilde{L}^{\infty}_IH^{\ell}_xL^2_v}^2)\sqrt{\ve}\|f\|_{\MXe_I}.
\end{align*}
For $\ve$ small enough, combining $\CA_3,\,\CA_4,\,\CA_5$, we have
\begin{align}
\begin{aligned}
&(\ve+\ve^{\frac{3}{2}})(\CA_3+\CA_4+\CA_5)\\\lesssim&\sqrt{\ve}\|f\|_{\MXe_I}(\|\gve\|_{\widetilde{L}^{\infty}_IH^{\frac{1}{2}}_xL^2_v}^2
+\|\gve\|_{\widetilde{L}^{\infty}_IH^{\ell}_xL^2_v}^2
+\|\gve\|_{L^2_IH^{\ell}_xL^2_v}^2)\\
&\quad+\sqrt{\ve}\|f\|_{\MXe_I}(\|\gve\|_{L^2_IH^{\frac{7}{2}}_xL^2_v}+\|\gve\|_{L^2_IH^{\frac{5}{2}}_xL^2_v}+\|\gve\|_{\widetilde{L}^{\infty}_IH^{\frac{5}{2}}_xL^2_v})\|\gve\|_{\widetilde{L}^{\infty}_IH^{\frac{1}{2}}_xL^2_v}\\
&\quad+\sqrt{\ve}\|f\|_{\MXe_I}(\|\gve\|_{\widetilde{L}^{\infty}_IH^{\frac{5}{2}}_xL^2_v}+\|\gve\|_{\widetilde{L}^{\infty}_IH^{\ell}_xL^2_v})\|\gve\|_{L^2_IH^{\frac{3}{2}}_xL^2_v}\\
&\quad+\sqrt{\ve}\|f\|_{\MXe_I}\|\gve\|_{\widetilde{L}^{\infty}_IH^{\ell}_xL^2_v}\|\gve\|_{L^2_IH^{\frac{5}{2}}_xL^2_v}.
\end{aligned}\nonumber
\end{align}
%Combining terms $I_{51}$ and $I_{52}$ into $I_5$, we have
Same as $\CA_3$, the terms $\CA_6+\CA_7$ can be bounded by
\begin{align}
\begin{aligned}
 \CA_6+\CA_7&\lesssim5\|f\|_{\widetilde{L}^{\infty}_IH^{\frac{1}{2}}_xL^2_v}\|\gve\|_{L^2_IH^{\ell+1}_xL^2_v}\|\gve\|_{\widetilde{L}^{\infty}_xH^{\ell}_x L_\gamma^2}\\
 &\quad+2\|f\|_{\widetilde{L}^{\infty}_IH^{\ell}_xL^2_v}\|\gve\|_{L^2_IH^{\ell+1}_xL^2_v}\|\gve\|_{\widetilde{L}^{\infty}_IH^{\frac{1}{2}}_x L_\gamma^2}\\
    &\quad+\|f\|_{\widetilde{L}^{\infty}_IH^{\ell}_xL^2_v}\|\gve\|_{L^2_IH^{\ell}_xL^2_v}\|\gve\|_{\widetilde{L}^{\infty}_IH^{\ell}_xL_v^2}\\
    &\lesssim\ve^{-\beta}\|f\|_{\MXe_I}\|\gve\|_{L^2_IH^{\ell+1}_xL^2_v}\|\gve\|_{\widetilde{L}^{\infty}_IH^{\frac{1}{2}}_x L_\gamma^2}\\
    &\quad+\ve^{-\beta}\|f\|_{\MXe_I}\|\gve\|_{L^2_IH^{\ell}_xL^2_v}\|\gve\|_{\widetilde{L}^{\infty}_IH^{\ell}_xL_v^2}\\
    &\quad+\|f\|_{\MXe_I}\|\gve\|_{L^2_IH^{\ell+1}_xL^2_v}\|\gve\|_{\widetilde{L}^{\infty}_xH^{\ell}_x L_\gamma^2}.
\end{aligned}\nonumber
\end{align}
For the last term $\CA_8$, as the decomposition in $\CA_5$, we have
\begin{align}
    \begin{aligned}
      \CA_8&\lesssim  \|\CT[\gve,\gve,\PP_0f]\|_{L^2_I\dot H^{\ell}_x(L_\gamma^2)^{\prime}}+\|\CT[\gve,\gve,\PP_0^{\perp}f]\|_{L^2_I\dot H^{\ell}_x(L_\gamma^2)^{\prime}}\\
      &\lesssim\|\gve\|_{\widetilde{L}^{\infty}_IH^{\ell}_xL^2_v}\|\gve\|_{L^2_IH^{\ell}_xL^2_v}\|\PP_0f\|_{\widetilde{L}^{\infty}_I\dot H^{\ell}_x L_\gamma^2}+\|\gve\|_{\widetilde{L}^{\infty}_I\dot H^{\ell}_xL^2_v}^2\|\PP^{\perp}_0f\|_{L^2_I\dot H^{\ell}_x L_\gamma^2}\\
      &\lesssim(\|\gve\|_{\widetilde{L}^{\infty}_IH^{\ell}_xL^2_v}\|\gve\|_{L^2_IH^{\ell}_xL^2_v}+\sqrt{\ve}\|\gve\|_{\widetilde{L}^{\infty}_IH^{\ell}_xL^2_v}^2)\|f\|_{\MXe_I}.
    \end{aligned}\nonumber
\end{align}
Then combining $\CA_6,\,\CA_7,\,\CA_8$, it holds that
\begin{align}
\begin{aligned}
 &\ve^{\beta}(\ve+\ve^{\frac{3}{2}})(\CA_6+\CA_7+\CA_8)\\
 \lesssim&\sqrt{\ve}\|f\|_{\MXe_I}(\|\gve\|_{L^2_IH^{\ell+1}_xL^2_v}\|\gve\|_{\widetilde{L}^{\infty}_IH^{\frac{1}{2}}_x L_\gamma^2}+\|\gve\|_{L^2_IH^{\ell}_xL^2_v}\|\gve\|_{\widetilde{L}^{\infty}_IH^{\ell}_xL_v^2})  \\
 &\quad+\sqrt{\ve}\|f\|_{\MXe_I}(\|\gve\|_{L^2_IH^{\ell+1}_xL^2_v}\|\gve\|_{\widetilde{L}^{\infty}H^{\ell}_x L_\gamma^2}+\|\gve\|_{\widetilde{L}^{\infty}_IH^{\ell}_xL^2_v}\|\gve\|_{L^2_IH^{\ell}_xL^2_v})\\
 &\quad+\sqrt{\ve}\|f\|_{\MXe_I}\|\gve\|_{\widetilde{L}^{\infty}_IH^{\ell}_xL^2_v}^2.
\end{aligned} \label{ineq-I6-I7-I8}
\end{align}

Putting \eqref{ineq-I1-I2}-\eqref{ineq-I6-I7-I8} into \eqref{ineq-semi-tri-sym-f-gve-gve} and using Sobolev embedding $H^j_x\hookrightarrow H^i_x$ for any $0<i<j<\infty$, we conclude that
\begin{align}
\begin{aligned}
&\|\mathfrak{T}^{\ve}[f,\gve,\gve]\|_{\MXe_I}\\
\lesssim&(1+\ve^{\beta})(\ve+\ve^{\frac{3}{2}})\CA_0+\ve(\CA_1+\CA_2)+(\ve+\ve^{\frac{3}{2}})(\CA_3+\CA_4+\CA_5)\\
 &\quad+\ve^{\beta}(\ve+\ve^{\frac{3}{2}})(\CA_6+\CA_7+\CA_8)\\
 \lesssim&\sqrt{\ve}\|f\|_{\MXe_I}\|\gve\|_{\widetilde{L}^{\infty}_IH^{\frac{1}{2}}_xL^2_v}(\|\gve\|_{\widetilde{L}^{\infty}_IH^{\frac{5}{2}}_xL^2_v}+\|\gve\|_{L^2_IH^{\frac{7}{2}}_xL^2_v})\\
 &\quad+\sqrt{\ve}\|f\|_{\MXe_I}(\|\gve\|_{L^2_IH^{\frac{3}{2}}_xL^2_v}\|\gve\|_{\widetilde{L}^{\infty}_IH^{\frac{5}{2}}_xL^2_v}+\|\gve\|_{L^2_IH^{\ell}_xL^2_v}^2)\\
  &\quad+\sqrt{\ve}\|f\|_{\MXe_I}\|\gve\|_{\widetilde{L}^{\infty}_IH^{\ell}_xL^2_v}(\|\gve\|_{L^2_IH^{\ell+1}_xL^2_v}+\|\gve\|_{\widetilde{L}^{\infty}_IH^{\ell}_xL^2_v})\\
  &\quad+ \sqrt{\ve} \|f\|_{\MXe_I}\|\gve\|_{\widetilde{L}^{\infty}_IH^1_xL^2_v}^2
.
\end{aligned}\label{ineq-liner-tri-part-1}
\end{align}
As a consequence, combining \eqref{ineq-liner-bi-part} and \eqref{ineq-liner-tri-part-1}, we obtain the estimate in Proposition \ref{prop-fixed-point-lem-conditions}-(5).
\end{proof}
%%%%%%%%%%%%%
\subsection{\texorpdfstring{Nonlinear Estimate of $\CB^{\ve}[f,f]$}{Nonlinear Estimate of CB-epsilon[f,f]}} In this subsection, we focus on proving the fifth statement in Proposition \ref{prop-fixed-point-lem-conditions}, that is, bilinear operator $\CB^{\ve}$ is bounded. It suffices to show that $\mathfrak{Q}^{\ve}$ and $\mathfrak{T}^{\ve}$ are bounded on $\MXe_I\times\MXe_I$. We have the following lemma.

\begin{lemma}
    Under the assumptions of Theorem \ref{thm-main}, there holds that for all intervals $I$,
    \begin{equation}\label{CB-bi-part}
      \|\mathfrak{Q}^{\ve}[f_1,f_2]\|_{\MXe_I}\lesssim\|f_1\|_{\MXe_I}\|f_2\|_{\MXe_I}.
    \end{equation}
    and
    \begin{equation}\label{CB-tri-part}
   \|\mathfrak{T}^{\ve}[f_1,f_2,\gve]\|_{\MXe_I}\lesssim\|f_1\|_{\MXe_I}\|f_2\|_{\MXe_I}.
    \end{equation}
\end{lemma}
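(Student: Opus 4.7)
The plan is to reduce both estimates to dual-space bounds on $\CQ_{\mathrm{sym}}$ and $\CT_{\mathrm{sym}}$ via the semigroup hypocoercivity result of Lemma \ref{lem-semi-hypocoercivity}(2), and then invoke the bilinear and trilinear Sobolev estimates of Lemmas \ref{lem-bilinear-ope} and \ref{lem-tri-estimates} with carefully chosen exponents. The point of the $\ve^\beta$--weighted high regularity block in the $\MXe_I$ norm is precisely to absorb the additional derivatives produced by the product rule.

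\textbf{Step 1: the bilinear bound \eqref{CB-bi-part}.} I would apply Lemma \ref{lem-semi-hypocoercivity}(2) at the three relevant levels $m\in\{\tfrac12,\tfrac32,\ell\}$, obtaining
\[
\|\mathfrak{Q}^\ve[f_1,f_2]\|_{\widetilde L^\infty_I H^m_x L^2_v}+\|\PP_0\mathfrak{Q}^\ve[f_1,f_2]\|_{L^2_I\dot H^m_x L^2_v}+\tfrac{1}{\ve}\|\PP_0^\perp\mathfrak{Q}^\ve[f_1,f_2]\|_{L^2_I H^m_x L_\gamma^2}\lesssim \ve\,\|\CQ_{\mathrm{sym}}(f_1,f_2)\|_{L^2_I H^m_x (L_\gamma^2)^{\prime}}.
\]
Then I would invoke \eqref{es-Q-f1f2} with $p_1=p_2=\infty$, $q_1=q_2=2$ and $r_1=r_2=\ell>\tfrac32$, so that both shifts $(\tfrac32-r_i)_+$ vanish, giving
\[
\|\CQ_{\mathrm{sym}}(f_1,f_2)\|_{L^2_I H^m_x (L_\gamma^2)^{\prime}}\lesssim \|f_1\|_{\widetilde L^\infty_I H^m_x L^2_v}\|f_2\|_{L^2_I H^\ell_x L_\gamma^2}+\|f_2\|_{\widetilde L^\infty_I H^m_x L^2_v}\|f_1\|_{L^2_I H^\ell_x L_\gamma^2}.
\]
Using \eqref{es-kerL-L2v-H*v-ineq} on the macroscopic part and the definition of $\MXe_I$ on the microscopic part, $\|f\|_{L^2_I H^\ell_x L_\gamma^2}\lesssim\ve^{-\beta}\|f\|_{\MXe_I}$; the $\widetilde L^\infty_I H^m_x L^2_v$--factor is bounded by $\|f\|_{\MXe_I}$ or $\ve^{-\beta}\|f\|_{\MXe_I}$ according to whether $m\leqslant \tfrac12$ or $m=\ell$. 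Collecting and accounting for the extra $\ve^\beta$ weight on the $H^\ell$--block of $\MXe_I$ yields an overall factor $\ve^{1-\beta}+\ve^{1-2\beta+\beta}=\ve^{1-\beta}\lesssim 1$ since $\beta<\tfrac12$, which gives \eqref{CB-bi-part}.

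\textbf{Step 2: the trilinear bound \eqref{CB-tri-part}.} Again Lemma \ref{lem-semi-hypocoercivity}(2) reduces the $\MXe_I$--norm of $\mathfrak{T}^\ve[f_1,f_2,\gve]$ to $(\ve+\ve^{3/2})\|\CT_{\mathrm{sym}}(f_1,f_2,\gve)\|_{L^2_I H^m_x (L_\gamma^2)^{\prime}}$ for $m\in\{\tfrac12,\tfrac32,\ell\}$. I would then apply Lemma \ref{lem-tri-estimates}, assigning the $(\tfrac32-r_i)_+$ and $(\tfrac32-\delta_i)_+$ shifts to the $\gve$ slot whenever possible; this is advantageous because $\gve$ enjoys the higher Sobolev bounds \eqref{ineq-gve-higher-bounds}--\eqref{ineq-L4-g-ve} at the modest price $\ve^{-\alpha(m-1/2)}$, whereas $f_1,f_2$ are only controlled at regularity $\ell$ in $\MXe_I$. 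Each of the three symmetrized placements of $\gve$ in $\CT_{\mathrm{sym}}(f_1,f_2,\gve)$ is handled in this way, mirroring the index choices used for $\CA_0,\dots,\CA_8$ in the previous subsection. After summation the estimate will read schematically
\[
\|\mathfrak{T}^\ve[f_1,f_2,\gve]\|_{\MXe_I}\lesssim \ve^{\sigma}\,\Phi\bigl(\|g_{\mathrm{in}}\|_{H^{1/2}_x L^2_v}\bigr)\,\|f_1\|_{\MXe_I}\|f_2\|_{\MXe_I},
\]
with a nonnegative increasing function $\Phi$ and an exponent $\sigma>0$ in every occurrence under the standing assumptions $\alpha<\tfrac14$, $\beta<\tfrac12$ and $\ell\in(\tfrac32,2]$.

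\textbf{Main obstacle.} The delicate point is the choice of the parameters $(\alpha_j,\beta_j,\gamma_j,r_j,\delta_j)$ in Lemma \ref{lem-tri-estimates} at the highest level $m=\ell$: the product rule can push one argument up to $H^{\ell+3/2}$, and the $\ve^{-\alpha(\cdot)}$ loss on high $\gve$--norms is the factor most liable to consume the $\ve^{1/2+\beta}$ gain provided by the semigroup together with the $\MXe_I$--weight. The task is to verify that the admissible indices produce exactly the quantities controlled by \eqref{ineq-lower-bound}--\eqref{ineq-L4-g-ve} (namely $\gve$ in $\widetilde L^\infty H^{\ell+1}$ and $L^2 H^{\ell+1}$ at worst) and that the resulting power $\sigma=\sigma(\alpha,\beta,\ell)$ of $\ve$ is strictly positive in every term; once this bookkeeping is carried out along the lines of the estimates for $\CA_3,\dots,\CA_8$ in the preceding subsection, the conclusion follows.
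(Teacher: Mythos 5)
Your Step 1 has a sign error in the power of $\varepsilon$ that is fatal to the argument. Recall that $\widehat{\mathfrak{Q}}^{\ve}[f_1,f_2](t,\xi)=\ve^{-1}\int_0^t\widehat{U}^{\ve}(t-s)\widehat{\CQ}_{\mathrm{sym}}(f_1(s),f_2(s))(\xi)\,\ud s$ carries a built-in $\ve^{-1}$ prefactor. Applying Lemma~\ref{lem-semi-hypocoercivity}(2), which gives $\|\int_0^t U^\ve(t-s)S(s)\,\ud s\|\lesssim\ve\|S\|$, the $\ve^{-1}$ and the $\ve$ cancel, so the reduction is $\|\mathfrak{Q}^\ve[f_1,f_2]\|_{\widetilde L^\infty_T H^m_xL^2_v}+\|\PP_0\mathfrak{Q}^\ve\|_{L^2_T\dot H^m_xL^2_v}+\tfrac{1}{\ve}\|\PP_0^\perp\mathfrak{Q}^\ve\|_{L^2_T H^m_xL_\gamma^2}\lesssim\|\CQ_{\mathrm{sym}}(f_1,f_2)\|_{L^2_T H^m_x(L_\gamma^2)'}$, \emph{with no extra power of $\ve$} (this is exactly Lemma~\ref{lem-bilinear-semi}(1)). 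Once the spurious factor $\ve$ is removed, the choice $r_1=r_2=\ell$ in \eqref{es-Q-f1f2} cannot close: at $m\in\{\tfrac32,\ell\}$ you pay $\|f_1\|_{\widetilde L^\infty_I H^m_xL^2_v}\,\|f_2\|_{L^2_I H^\ell_xL_\gamma^2}$, and since $\MXe_I$ controls $\widetilde L^\infty H^{1/2}$ without weight and $\widetilde L^\infty H^\ell$ only with the $\ve^\beta$ weight, every occurrence of a $\widetilde L^\infty H^m$ norm with $m>\tfrac12$ costs a negative power of $\ve$; together with $\|f_2\|_{L^2_I H^\ell_xL_\gamma^2}\lesssim\ve^{-\beta}\|f_2\|_{\MXe_I}$ you land on something like $\ve^{-\beta(2-\theta)}\|f_1\|_{\MXe_I}\|f_2\|_{\MXe_I}$ with no gain to absorb it. The bilinear estimate \eqref{CB-bi-part} genuinely requires the $U^{\ve,\flat}+U^{\ve,\sharp}$ decomposition and the one-derivative-gaining estimate of Lemma~\ref{lem-bilinear-semi}(3), $\|\mathfrak{Q}^{\ve,\flat}[f_1,f_2]\|_{L^2_T H^m_x L_\gamma^2}\lesssim\|\CQ_{\mathrm{sym}}(f_1,f_2)\|_{L^2_TH^{m-1}_x(L_\gamma^2)'}$ (and the companion $\widetilde L^\infty$/$\widetilde L^4$ bounds), which is precisely what makes the bookkeeping in \cite[Section~5.5]{carrapatoso2025navier} close; this is why the paper simply refers the reader there and does not prove \eqref{CB-bi-part} from Lemma~\ref{lem-semi-hypocoercivity}(2) alone.

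Step~2 is closer to the paper's route, but two substantive points are glossed over. First, "mirroring the index choices for $\CA_0,\dots,\CA_8$" is misleading: those estimates concern $\CT[f,\gve,\gve]$ with \emph{two} $\gve$ slots available to absorb the Sobolev shifts, while here you must bound $\CT_{\mathrm{sym}}(f_1,f_2,\gve)$ with only \emph{one} $\gve$ slot, so the parameter assignments are genuinely different and must be redone for each of the three placements of $\gve$. Second, the paper's treatment of the zeroth-order block $\CB_0=\|\CT_{\mathrm{sym}}\|_{L^2_I L^2_x(L_\gamma^2)'}$ does not come from Lemma~\ref{lem-tri-estimates} at all; it relies on the decomposition $f_i=\PP_0 f_i+\PP_0^\perp f_i$, H\"older together with $H^1_x\hookrightarrow L^6_x$, and the interpolation inequality \eqref{interpolation-nabla-x-f} to trade $\widetilde L^4 L^6_x$ against the $\widetilde L^\infty \dot H^{1/2}_x$ and $L^2\dot H^{3/2}_x$ components of $\MXe_I$; your sketch does not address this block, even though it is where the low-frequency content of $\mathfrak{T}^\ve[f_1,f_2,\gve]$ is controlled. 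Finally, stating "the resulting power $\sigma(\alpha,\beta,\ell)$ is strictly positive in every term" is the conclusion, not an argument; the paper verifies this term by term in the decompositions $\CB_{01},\dots,\CB_{05}$ and $\CB_1,\CB_2,\CB_3$, and those verifications (not merely the general strategy) constitute the proof.
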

\begin{proof}
  We can prove \eqref{CB-bi-part} by a similar way as shown in \cite[Section 5.5]{carrapatoso2025navier}. For simplicity, we omit it. Now we focus on proving \eqref{CB-tri-part}. Since $\mathfrak{T}^{\ve}$ is symmetric in its arguments, we consider $\mathfrak{T}^{\ve}[\gve,f_1,f_2]$. Applying Lemma \ref{lem-semi-hypocoercivity} to get
\begin{align*}
 \|\mathfrak{T}^{\ve}[\gve,f_1,f_2]\|_{\MXe_I}&\lesssim(1+\ve^{\be})(\ve+\ve^{\frac{3}{2}})\|\CT_{\mathrm{sym}}[\gve,f_1,f_2]\|_{L^2_IL^2_x(L_\gamma^2)^{\prime}}\\
 &\quad+\ve\|\CT_{\mathrm{sym}}[\gve,f_1,f_2]\|_{L^2_I\dot H^{\frac{1}{2}}_x(L_\gamma^2)^{\prime}}\\
 &\quad+(\ve+\ve^{\frac{3}{2}})\|\CT_{\mathrm{sym}}[\gve,f_1,f_2]\|_{L^2_I\dot H^{\frac{3}{2}}_x(L_\gamma^2)^{\prime}}\\
 &\quad+\ve^{\beta}(\ve+\ve^{\frac{3}{2}})\|\CT_{\mathrm{sym}}[\gve,f_1,f_2]\|_{L^2_I\dot H^{\ell}_x(L_\gamma^2)^{\prime}}\\
 &=:\CB_0+\CB_1+\CB_2+\CB_3.
 \end{align*}
Then we shall handle the term $\CB_0$ first and then handle the terms $\CB_1,\CB_2,\CB_3$. Before this, we introduce an interpolation inequality:
\begin{align}
    \|\nabla_x f\|_{L^2_x}\leqslant\||\mathrm{D}_x|^{\frac{1}{2}}f\|_{L^2_x}^{\frac{1}{2}}\||\mathrm{D}_x|^{\frac{3}{2}}f\|_{L^2_x}^{\frac{1}{2}}.\label{interpolation-nabla-x-f}
\end{align}
Indeed, by Plancherel’s theorem in $L^2$ and the Cauchy–Schwarz  inequality, we have
\begin{align*}
    \|\nabla_x f\|_{L^2_x}&=\Big(\int_{\BR}|\xi|^2|\hat{f(\xi)|^2}\ud\xi\Big)^{\frac{1}{2}}\\
    &=\Big(\int_{\BR}|\xi|^{\frac{3}{2}}|\hat{f(\xi)}||\xi|^{\frac{1}{2}}|\hat{f(\xi)}|\ud\xi\Big)^{\frac{1}{2}}\\
    &\leqslant\Big(\int_{\BR}|\xi|^{3}|\hat{f(\xi)}|^2\ud\xi\Big)^{\frac{1}{4}}\Big(\int_{\BR}|\xi||\hat{f(\xi)}|^2\ud\xi\Big)^{\frac{1}{4}}\\
    &=\||\mathrm{D}_x|^{\frac{1}{2}}f\|_{L^2_x}^{\frac{1}{2}}\||\mathrm{D}_x|^{\frac{3}{2}}f\|_{L^2_x}^{\frac{1}{2}}.
\end{align*}
We now estimate $\CB_0$ and decompose it as
\begin{align*}
    \CB_0\lesssim\CB_{01}+\CB_{02}+\CB_{03}+\CB_{04}+\CB_{05},
\end{align*}
where
\begin{align*}
    &\CB_{01}:=\ve\|\CT[\gve,\PP_0f_1,\PP_0 f_2]\|_{L^2_IL^2_x(L_\gamma^2)^{\prime}}+\ve\|\CT[\gve,\PP_0f_2,\PP_0 f_1]\|_{L^2_IL^2_x(L_\gamma^2)^{\prime}},\\
     &\CB_{02}:=\ve\|\CT[\gve,\PP_0 f_1,\PP_0^{\perp} f_2]\|_{L^2_IL^2_x(L_\gamma^2)^{\prime}}+\ve\|\CT[\gve,\PP_0f_2,\PP_0^{\perp} f_1]\|_{L^2_IL^2_x(L_\gamma^2)^{\prime}},\\
    &\CB_{03}:=\ve\|\CT[\gve,\PP_0^{\perp} f_1,\PP_0^{\perp} f_2]\|_{L^2_IL^2_x(L_\gamma^2)^{\prime}}+\ve\|\CT[\gve,\PP_0^{\perp}f_2,\PP_0^{\perp} f_1]\|_{L^2_IL^2_x(L_\gamma^2)^{\prime}},\\
    &\CB_{04}:=\ve\|\CT[\gve,\PP_0^{\perp} f_1,\PP_0f_2]\|_{L^2_IL^2_x(L_\gamma^2)^{\prime}}+\ve\|\CT[\gve,\PP_0^{\perp}f_2,\PP_0 f_1]\|_{L^2_IL^2_x(L_\gamma^2)^{\prime}},\\
    &\CB_{05}:=\ve\|\CT[f_1, f_2,\gve]\|_{L^2_IL^2_x(L_\gamma^2)^{\prime}}+\ve\|\CT[f_2, f_1,\gve]\|_{L^2_IL^2_x(L_\gamma^2)^{\prime}}.
\end{align*}
For the first part of $\CB_{01}$, applying H\"{o}lder's inequality, we have
\begin{align}
    \begin{aligned}
        &\|\CT[\gve,\PP_0f_1,\PP_0 f_2]\|_{L^2_IL^2_x(L_\gamma^2)^{\prime}}\\
        \lesssim&\|\gve\|_{\widetilde{L}^{\infty}_IL^6_xL^2_v} \|\PP_0 f_1\|_{L^4_IL^6_xL^2_{\gamma}} \|\PP_0 f_2\|_{L^4_IL^6_xL^2_v}\\
        &\quad+\|\gve\|_{\widetilde{L}^{\infty}_IL^6_xL^2_{\ga}} \|\PP_0 f_1\|_{L^4_IL^6_xL^2_{v}} \|\PP_0 f_2\|_{L^4_IL^6_xL^2_v}\\
        \lesssim&\|\gve\|_{\widetilde{L}^{\infty}_IH^1_xL^2_{v}} \|\nabla_x\PP_0 f_1\|_{L^4_IL^2_xL^2_{\ga}} \|\nabla\PP_0 f_2\|_{L^4_IL^2_xL^2_v}\\
        \lesssim&\|\gve\|_{\widetilde{L}^{\infty}_IH^1_xL^2_{v}} \||\mathrm{D}_x|^{\frac{1}{2}}\PP_0 f_1\|_{\widetilde{L}^{\infty}_IL^2_xL^2_{\ga}}^{\frac{1}{2}} \||\mathrm{D}_x|^{\frac{3}{2}}\PP_0 f_1\|_{L^2_IL^2_xL^2_{\ga}}^{\frac{1}{2}}\\
        &\quad\times\||\mathrm{D}_x|^{\frac{1}{2}}\PP_0 f_2\|_{\widetilde{L}^{\infty}_IL^2_xL^2_{v}}^{\frac{1}{2}} \||\mathrm{D}_x|^{\frac{3}{2}}\PP_0 f_2\|_{L^2_IL^2_xL^2_{v}}^{\frac{1}{2}}\\
        \lesssim&\|\gve\|_{\widetilde{L}^{\infty}_IH^1_xL^2_{v}} \|f_1\|_{\widetilde{L}^{\infty}_IH^{\frac{1}{2}}_xL^2_{v}}^{\frac{1}{2}} \|\PP_0 f_1\|_{L^2_I\dot{H}^{\frac{3}{2}}_xL^2_{\ga}}^{\frac{1}{2}}\|f_2\|_{\widetilde{L}^{\infty}_IH^{\frac{1}{2}}_xL^2_{v}}^{\frac{1}{2}} \|\PP_0 f_2\|_{L^2_I\dot{H}^{\frac{3}{2}}_xL^2_{\ga}}^{\frac{1}{2}}\\
        \lesssim&\|\gve\|_{\widetilde{L}^{\infty}_IH^1_xL^2_{v}} \|f_1\|_{\MXe}\|f_2\|_{\MXe},
    \end{aligned}\label{CB-01-es-part1}
\end{align}
where we used the Sobolev embeddings $H^1_x\hookrightarrow L^6_x$ and the embedding $L^2_{\ga}\hookrightarrow L^2_v$, and the boundedness of operator $P_0:L^2_v\to L^2_{\ga}\subset L^2_v$. Similarly,
\begin{align}
    \|\CT[\gve,\PP_0f_2,\PP_0f_1]\|_{L^2_IL^2_x(L_\gamma^2)^{\prime}}\lesssim\|\gve\|_{\widetilde{L}^{\infty}_IH^1_xL^2_{v}} \|f_1\|_{\MXe}\|f_2\|_{\MXe}.\label{CB-01-es-part2}
\end{align}
Putting together \eqref{CB-01-es-part1} and \eqref{CB-01-es-part2}, it follows that
\begin{align*}
    \CB_{01}\lesssim\ve\|\gve\|_{\widetilde{L}^{\infty}_IH^1_xL^2_{v}} \|f_1\|_{\MXe}\|f_2\|_{\MXe}\lesssim\ve^{1-\frac{1}{2}\al}\|f_1\|_{\MXe}\|f_2\|_{\MXe}\lesssim\|f_1\|_{\MXe}\|f_2\|_{\MXe},
\end{align*}
for $\al<\frac{1}{4}$. In the last step we used the smallness of $\ve$, which will be assumed throughout the subsequent estimates. In the same way, for the term $\CB_{02}$, we obtain
\begin{align}
\begin{aligned}
    &\|\CT[\gve,\PP_0f_1,\PP_0^{\perp} f_2]\|_{L^2_IL^2_x(L_\gamma^2)^{\prime}}
        \\
        \lesssim&\|\gve\|_{\widetilde{L}^{\infty}_IH^1_xL^2_{v}} \|\nabla_x\PP_0 f_1\|_{L^4_IL^2_xL^2_{\ga}} \|\PP_0^{\perp} f_2\|_{L^4_I H^1_xL^2_v}\\
        \lesssim&\|\gve\|_{\widetilde{L}^{\infty}_IH^1_xL^2_{v}} \||\mathrm{D}_x|^{\frac{1}{2}}\PP_0 f_1\|_{\widetilde{L}^{\infty}_IL^2_xL^2_{\ga}}^{\frac{1}{2}} \||\mathrm{D}_x|^{\frac{3}{2}}\PP_0 f_1\|_{L^2_IL^2_xL^2_{\ga}}^{\frac{1}{2}}\\
        &\quad\times\||\mathrm{D}_x|^{\frac{1}{2}}\PP_0^{\perp} f_2\|_{\widetilde{L}^{\infty}_IL^2_xL^2_{v}}^{\frac{1}{2}} \||\mathrm{D}_x|^{\frac{3}{2}}\PP_0^{\perp}  f_2\|_{L^2_IH^{\frac{3}{2}}_xL^2_{v}}^{\frac{1}{2}}\\
        \lesssim&\|\gve\|_{\widetilde{L}^{\infty}_IH^1_xL^2_{v}} \|f_1\|_{\widetilde{L}^{\infty}_IH^{\frac{1}{2}}_xL^2_{v}}^{\frac{1}{2}} \|\PP_0 f_1\|_{L^2_I\dot{H}^{\frac{3}{2}}_xL^2_{\ga}}^{\frac{1}{2}}\\
        &\quad\times\|f_2\|_{\widetilde{L}^{\infty}_IH^{\frac{1}{2}}_xL^2_{v}}^{\frac{1}{2}} \|\PP_0^{\perp}  f_2\|_{L^2_IH^{\frac{3}{2}}_xL^2_{\ga}}^{\frac{1}{2}}\\
        \lesssim&\ve^{\frac{1}{4}}\|\gve\|_{\widetilde{L}^{\infty}_IH^1_xL^2_{v}} \|f_1\|_{\MXe}\|f_2\|_{\MXe}.
\end{aligned}\label{CB-02-part1}
\end{align}
By the same reasoning, we have
\begin{align}
 \|\CT[\gve,\PP_0f_2,\PP_0^{\perp} f_1]\|_{L^2_IL^2_x(L_\gamma^2)^{\prime}}\lesssim\ve^{\frac{1}{4}}\|\gve\|_{\widetilde{L}^{\infty}_IH^1_xL^2_{v}} \|f_1\|_{\MXe}\|f_2\|_{\MXe}. \label{CB-02-part2}
\end{align}
From \eqref{CB-02-part1}–\eqref{CB-02-part2}, we deduce that
\begin{align}
   \CB_{02}\lesssim\ve^{1-\frac{1}{2}\al}\|f_1\|_{\MXe}\|f_2\|_{\MXe}\lesssim\|f_1\|_{\MXe}\|f_2\|_{\MXe},\nonumber
\end{align}
for $\al<\frac{1}{4}$. For terms $\CB_{03}$, applying Lemma \ref{lem-tri-estimates} and Sobolev embedding, we have
\begin{align}\label{CB-03-part1}
\begin{aligned}
   &\|\CT[\gve,\PP_0^{\perp}f_1,\PP_0 f_2]\|_{L^2_IL^2_x(L_\gamma^2)^{\prime}}\\
   \lesssim&\|\gve\|_{\widetilde{L}^{\infty}_IH^1_xL^2_v}\|\PP_0^{\perp}f_1\|_{L^2_IH^{\frac{3}{2}}_xL^2_v}\|\PP_0 f_2\|_{\widetilde{L}^{\infty}_IH^{\frac{1}{2}}_xL^2_{\ga}}\\
   &\quad+\|\gve\|_{\widetilde{L}^{\infty}_IH^{\frac{3}{2}}_xL^2_v}\|\PP_0^{\perp}f_1\|_{L^2_IH^{\frac{3}{2}}_xL^2_v}\|\PP_0 f_2\|_{\widetilde{L}^{\infty}_IH^{\frac{1}{2}}_xL^2_{\ga}}\\
   &\quad+\|\gve\|_{\widetilde{L}^{\infty}_I H^{\ell}_xL^2_v}
   \|\PP_0^{\perp}f_1\|_{L^2_IH^{1}_xL^2_v}
   \|\PP_0 f_2\|_{\widetilde{L}^{\infty}_IH^{\frac{1}{2}}_xL^2_{\ga}}\\
   %&\lesssim\Big(\|\gve\|_{\widetilde{L}^{\infty}_IH^{1}_xL^2_v}+\|\gve\|_{\widetilde{L}^{\infty}_IH^{\frac{3}{2}}_xL^2_v}\Big)\|\PP_0^{\perp}f_1\|_{L^2_IH^{\frac{3}{2}}_xL^2_v}\|\PP_0 f_2\|_{\widetilde{L}^{\infty}_IH^{\frac{1}{2}}_xL^2_{\ga}}\\
   %&\quad+\|\PP_0^{\perp}f_1\|_{L^2_IH^{\frac{3}{2}}_xL^2_v}\|\gve\|_{L^2_IH^{\ell}L^2_v}\|\PP_0 f_2\|_{\widetilde{L}^{\infty}_IH^{\frac{1}{2}}_xL^2_{\ga}}\\
   \lesssim&\ve^{\frac{1}{2}}\|\gve\|_{\widetilde{L}^{\infty}_IH^{\ell}_xL^2_v}
   \|\PP_0^{\perp}f_1\|_{L^2_IH^{\frac{3}{2}}_xL^2_v}
   \|f_2\|_{\widetilde{L}^{\infty}_IH^{\frac{1}{2}}_xL^2_{v}}\\
   \lesssim&\ve^{\frac{1}{2}}\|\gve\|_{\widetilde{L}^{\infty}_IH^{\ell}_xL^2_v}\|f_1\|_{\MXe}\|f_2\|_{\MXe}.
   \end{aligned}
   \end{align}
 	A parallel argument yields
\begin{align}
  \|\CT[\gve,\PP_0^{\perp}f_2,\PP_0 f_1]\|_{L^2_IL^2_x(L_\gamma^2)^{\prime}}\lesssim\ve^{\frac{1}{2}}\|\gve\|_{L^2_IH^{\ell}_xL^2_v}\|f_1\|_{\MXe}\|f_2\|_{\MXe}.\label{CB-03-part2}
\end{align}
 Gathering \eqref{CB-03-part1} and \eqref{CB-03-part2}, we further obtain
\begin{align*}
  \CB_{03}&\lesssim  \ve\|\gve\|_{\widetilde{L}^{\infty}_I H^{\ell}_xL^2_v}
   \|\PP_0^{\perp}f_1\|_{L^2_IH^{1}_xL^2_v}
   \|\PP_0 f_2\|_{\widetilde{L}^{\infty}_IH^{\frac{1}{2}}_xL^2_{\ga}}\\
   &\lesssim\ve^{\frac{3}{2}-\al(\ell-\frac{1}{2})}\|\gve\|_{L^2_IH^{\ell}_xL^2_v}\|f_1\|_{\MXe}\|f_2\|_{\MXe}\\
   &\lesssim\|f_1\|_{\MXe}\|f_2\|_{\MXe}.
\end{align*}
Following the same line of argument, we obtain
\begin{align}
    \begin{aligned}
       &\|\CT[\gve,\PP_0^{\perp}f_1,\PP_0^{\perp} f_2]\|_{L^2_IL^2_x(L_\gamma^2)^{\prime}}\\
       \lesssim&
       \|\gve \|_{\widetilde{L}^{\infty}_I H^{1}_xL^2_{v}}
       \|\PP_0^{\perp}f_1 \|_{\widetilde{L}^{\infty}_I H^{\frac{1}{2}}_xL^2_{v}}
       \|\PP_0^{\perp}f_2 \|_{L^2_I H^{\frac{3}{2}}_x L^2_{\ga}}\\
       &\quad+\|\gve \|_{\widetilde{L}^{\infty}_I H^{1}_xL^2_{v}}
       \|\PP_0^{\perp}f_1 \|_{\widetilde{L}^{\infty}_I H^{\frac{1}{2}}_xL^2_{v}}
       \|\PP_0^{\perp}f_2 \|_{L^2_I H^{\ell}_x L^2_{\ga}}\\
       \lesssim& \ve^{\frac{1}{2}}(\ve^{\frac{1}{2}}+\ve^{\frac{1}{2}-\be})\|f_1\|_{\MXe_I}\|f_2\|_{\MXe}\\
       \lesssim& \|\gve \|_{\widetilde{L}^{\infty}_I H^{1}_xL^2_{v}}\|f_1\|_{\MXe_I}\|f_2\|_{\MXe}.
         \end{aligned}\nonumber
   \end{align}
Similarly,
\begin{align}
   &\|\CT[\gve,\PP_0^{\perp}f_1,\PP_0^{\perp} f_2]\|_{L^2_IL^2_x(L_\gamma^2)^{\prime}} \lesssim \|\gve \|_{\widetilde{L}^{\infty}_I H^{1}_xL^2_{v}}\|f_1\|_{\MXe_I}\|f_2\|_{\MXe}.\label{CB-04-part2}
\end{align}
Then,
\begin{align*}
    \CB_{04}&\lesssim \ve\|\gve \|_{\widetilde{L}^{\infty}_I H^{1}_xL^2_{v}}\|f_1\|_{\MXe_I}\|f_2\|_{\MXe}\\
   &\lesssim\ve^{1-\al(\ell-\frac{1}{2})}\|\gve\|_{L^2_IH^{\ell}_xL^2_v}\|f_1\|_{\MXe}\|f_2\|_{\MXe}\\
   &\lesssim\|f_1\|_{\MXe}\|f_2\|_{\MXe}.
\end{align*}
By repeating the same reasoning, one also finds
    \begin{align}\label{CB-05-part1}
   \begin{aligned}
       &\|\CT[f_1,f_2,\gve]\|_{L^2_IL^2_x(L_\gamma^2)^{\prime}}\\\lesssim&\|f_1\|_{\widetilde{L}^{\infty}_IL^2_xL^2_v}
       \|f_2\|_{\widetilde{L}^{\infty}_IH^{\ell}_xL^2_v}
       \|\gve\|_{L^2_IH^{\ell}_xL^2_{\ga}}+\|f_1\|_{\widetilde{L}^{\infty}_IH^{\frac{1}{2}}_xL^2_v}\|f_2\|_{\widetilde{L}^{\infty}_IH^{\frac{1}{2}}_xL^2_v}\|\gve\|_{L^2_IH^{2}_xL^2_{\ga}}\\
       \lesssim&\Big(\ve^{-\be}\|\gve\|_{L^2_IH^{\ell}_xL^2_{\ga}}+\|\gve\|_{L^2_I H^{2}_x L^2_{\ga}}\Big)\|f_1\|_{\MXe} \|f_2\|_{\MXe}.
   \end{aligned}
\end{align}
Then, repeating above progress in \eqref{CB-05-part1}, we obtain
\begin{align}
 \|\CT[f_1,f_2,\gve]\|_{L^2_IL^2_x(L_\gamma^2)^{\prime}} \lesssim\Big(\ve^{-\be}\|\gve\|_{L^2_IH^{\ell}_xL^2_{\ga}}+\|\gve\|_{L^2_I H^{2}_x L^2_{\ga}}\Big)\|f_1\|_{\MXe} \|f_2\|_{\MXe}.\label{CB-05-part2}
\end{align}
Moreover,  combining \eqref{CB-05-part1}-\eqref{CB-05-part2}, we have
\begin{align*}
 \CB_{05}&\lesssim\ve\Big(\ve^{-\be}\|\gve\|_{L^2_IH^{\ell}_xL^2_{\ga}}+\|\gve\|_{L^2_I H^{2}_x L^2_{\ga}}\Big)\|f_1\|_{\MXe} \|f_2\|_{\MXe}\\
    &\lesssim\ve^{1-\be}\|\gve\|_{L^2_IH^{\ell}_xL^2_v}\|f_1\|_{\MXe}\|f_2\|_{\MXe}\\
   &\lesssim\|f_1\|_{\MXe}\|f_2\|_{\MXe}.
\end{align*}
Collecting the estimates for $\CB_{0i},\,i=1,\cdots,5$, we conclude that
\begin{align}
    \CB_0\lesssim\|f_1\|_{\MXe}\|f_2\|_{\MXe}.\label{B0-es}
\end{align}
Next, we deal with the terms $\CB_1,\CB_2,\CB_3$. Applying Lemma \ref{lem-tri-estimates} with $m=\frac{1}{2},\frac{3}{2}$, we further have for $\CB_1$,
\begin{align*}
\CB_1&\lesssim\CB_{1}^{(1)}+\CB_{2}^{(1)},
\end{align*}
where we denote by
\begin{align*}
 \CB_{1}^{(1)}&:=\ve\|\CT[f_1,\gve,f_2]\|_{L^2_I\dot H^{\frac{1}{2}}_x(L_\gamma^2)^{\prime}}+\ve\|\CT[\gve,f_1,f_2]\|_{L^2_I\dot H^{\frac{1}{2}}_x(L_\gamma^2)^{\prime}}\\
 &\quad+\ve\|\CT[f_1,f_2,\gve]\|_{L^2_I\dot H^{\frac{1}{2}}_x(L_\gamma^2)^{\prime}}\\
 &=:\CB_{11}^{(1)}+\CB_{12}^{(1)}+\CB_{13}^{(1)},\\ \CB_{2}^{(1)}&:=\ve\|\CT[f_2,\gve,f_1]\|_{L^2_I\dot H^{\frac{1}{2}}_x(L_\gamma^2)^{\prime}}+\ve\|\CT[\gve,f_2,f_1]\|_{L^2_I\dot H^{\frac{1}{2}}_x(L_\gamma^2)^{\prime}}\\
 &\quad+\ve\|\CT[f_2,f_1,\gve]\|_{L^2_I\dot H^{\frac{1}{2}}_x(L_\gamma^2)^{\prime}}\\
  &=:\CB_{21}^{(1)}+\CB_{22}^{(1)}+\CB_{23}^{(1)}.\\
\end{align*}
Since $\PP_0$ is bounded in $L^2_v$ and is $L_\gamma^2\hookrightarrow L^2_v$, applying Lemma \ref{lem-tri-estimates} and Lemmas \ref{lem-fluid-solution}-\ref{lem-smooth-fluid-solution}, for the term $\CB_{11}^{(1)}$ we can get for $\al<\frac{1}{4}$,
\begin{align}\label{CB-1-11-ker}
      \begin{aligned}
        &\ve\|\CT[f_1,\gve,\PP_0f_2]\|_{L^2_I\dot H^{\frac{1}{2}}_x(L_\gamma^2)^{\prime}}\\
        \lesssim&\ve\|f_1\|_{L^2_I\dot H^{\frac{3}{2}}_xL^2_v}
        \|\PP_0 f_2\|_{\widetilde{L}^{\infty}_I\dot H^{\frac{1}{2}}_x L_\gamma^2}
        (\|\gve\|_{\widetilde{L}^{\infty}_I\dot H^{\frac{3}{2}}_xL^2_v}+\|\gve\|_{\widetilde{L}^{\infty}_I\dot H^{\frac{5}{2}}_xL^2_v})\\
        &\quad+\ve\|f_1\|_{L^2_I\dot H^{\frac{3}{2}}_xL^2_v}
        \|\PP_0 f_2\|_{\widetilde{L}^{\infty}_I\dot H^{\frac{1}{2}}_x L_\gamma^2}
        \|\gve\|_{\widetilde{L}^{\infty}_I\dot H^{\ell}_xL^2_v}
\\
        &\quad+\ve\|f_1\|_{\widetilde{L}^{\infty}_I\dot H^{\frac{1}{2}}_xL^2_v}
        \|\PP_0 f_2\|_{L^2_I\dot H^{\frac{3}{2}}_xL^2_v}
        \|\gve\|_{\widetilde{L}^{\infty}_I\dot H^{\ell}_xL^2_v}\\
        \lesssim& \ve(1+\sqrt{\ve})(\ve^{-2\alpha}+\ve^{-\al(\ell-\frac{1}{2})})\|f_1\|_{\MXe_I}\|f_2\|_{\MXe_I}\\
        \lesssim&\|f_1\|_{\MXe_I}\|f_2\|_{\MXe_I}.
    \end{aligned}
\end{align}
Similarly, for $\ell\in(\frac{3}{2},2]$ and $\be<\frac{1}{2}$, $\al<\frac{1}{4}$, we have
\begin{align}
    \begin{aligned}
       &\ve\|\CT[f_1,\gve,\PP_0^{\perp}f_2]\|_{L^2_I\dot H^{\frac{1}{2}}_x(L_\gamma^2)^{\prime}}\\
       \lesssim&\ve\|f_1\|_{\widetilde{L}^{\infty}_IH^{\frac{1}{2}}_xL^2_v} \|\PP_0^{\perp}f_2\|_{L^2_IH^{\ell}_x L_\gamma^2}
        (\|\gve\|_{\widetilde{L}^{\infty}_IH^{\ell}_xL^2_v}+\|\gve\|_{\widetilde{L}^{\infty}_IH^{\frac{3}{2}}_xL^2_v})\\
        &\quad+\ve\|f_1\|_{\widetilde{L}^{\infty}_IH^{\frac{1}{2}}_xL^2_v}
        \|\PP_0^{\perp}f_2\|_{L^2_IH^{\frac{3}{2}}_x L_\gamma^2}
        \|\gve\|_{\widetilde{L}^{\infty}_IH^{\ell}_xL^2_v}\\
        \lesssim&\ve(\ve^{\frac{1}{2}-\be-\al(\ell-\frac{1}{2})}+\ve^{\frac{1}{2}-\al(\ell-\frac{1}{2})})\|f_1\|_{\MXe_I}\|f_2\|_{\MXe_I}\\
        \lesssim&\|f_1\|_{\MXe_I}\|f_2\|_{\MXe_I}.
    \end{aligned}\label{ineq-CB-tri-low-3rd}
\end{align}
Combining \eqref{CB-1-11-ker} with \eqref{ineq-CB-tri-low-3rd} to yield
\begin{align}
\begin{aligned}
    \CB_{11}^{(1)}&\lesssim \ve\|\CT[f_1,\gve,\PP_0f_2]\|_{L^2_I\dot H^{\frac{1}{2}}_x(L_\gamma^2)^{\prime}}+\ve\|\CT[f_1,\gve,\PP_0^{\perp}f_2]\|_{L^2_I\dot H^{\frac{1}{2}}_x(L_\gamma^2)^{\prime}}\\
    &\lesssim\|f_1\|_{\MXe_I}\|f_2\|_{\MXe_I}.
\end{aligned}\nonumber
\end{align}
Using a similar approach, we also have
\begin{align}
\begin{aligned}
    \CB_{12}^{(1)}&\lesssim \ve\|\CT[\gve,f_1,\PP_0f_2]\|_{L^2_I\dot H^{\frac{1}{2}}_x(L_\gamma^2)^{\prime}}+\ve\|\CT[\gve,f_1,\PP_0^{\perp}f_2]\|_{L^2_I\dot H^{\frac{1}{2}}_x(L_\gamma^2)^{\prime}}\\
    &\lesssim\|f_1\|_{\MXe_I}\|f_2\|_{\MXe_I}. \nonumber
\end{aligned}
\end{align}
For the term $\CB_{13}^{(1)}$, applying Lemma \ref{lem-tri-estimates} we have
\begin{align}\label{ineq-CB-tri-low-1st}
    \begin{aligned}
     \CB_{13}^{(1)}&\lesssim\ve \|f_1\|_{L^2_I\dot H^{\frac{3}{2}}_xL^2_v}
      \|f_2\|_{\widetilde{L}^{\infty}_I\dot H^{\frac{1}{2}}_xL^2_v}
      (\|\gve\|_{\widetilde{L}^{\infty}_I\dot H^{\ell}_xL^2_v}+\|\gve\|_{\widetilde{L}^{\infty}_I\dot H^{\frac{3}{2}}_xL^2_v})\\
      &\quad+\ve\|f_1\|_{\widetilde{L}^{\infty}_I\dot H^{\frac{1}{2}}_xL^2_v}\|f_2\|_{L^2_I\dot H^{\frac{3}{2}}_xL^2_v}\|\gve\|_{\widetilde{L}^{\infty}_I\dot H^{\ell}_xL^2_v}\\
      &\quad+\ve\|f_1\|_{\widetilde{L}^{\infty}_I\dot H^{\ell}_xL^2_v}\|f_2\|_{\widetilde{L}^{\infty}_I\dot H^{\frac{1}{2}}_xL^2_v}\|\gve\|_{L^2_I\dot H^{\frac{3}{2}}_x L_\gamma^2}\\
      &\lesssim (1+\sqrt{\ve})\ve^{1-\al(\ell-\frac{1}{2})}\|f_1\|_{\MXe_I}\|f_2\|_{\MXe_I}\\
       &\lesssim \|f_1\|_{\MXe_I}\|f_2\|_{\MXe_I}.
    \end{aligned}
\end{align}

Similarly,
\begin{align}
 \ve\|\CT[f_2,f_1,\gve]\|_{L^2_IH^{\frac{1}{2}}_x(L_\gamma^2)^{\prime}}&\lesssim \|f_1\|_{\MXe_I}\|f_2\|_{\MXe_I},\nonumber
\end{align}
that is
\begin{align}
    \CB_{13}^{(1)}\lesssim\|f_1\|_{\MXe_I}\|f_2\|_{\MXe_I}.\nonumber
\end{align}
Summing up the terms $\CB_{11}^{(1)},\CB_{12}^{(1)},\CB_{13}^{(1)}$, we have
\begin{align}
    \CB_{1}^{(1)}\lesssim\|f_1\|_{\MXe_I}\|f_2\|_{\MXe_I}.\nonumber
\end{align}
Due to symmetric structure, we repeating progress as above to get
\begin{align}
    \CB_{2}^{(1)}\lesssim\|f_1\|_{\MXe_I}\|f_2\|_{\MXe_I}.\nonumber
\end{align}
Thus we have
\begin{align}\label{ineq-CB-low-1-2}
  \CB_1\lesssim\CB_{1}^{(1)}+\CB_{2}^{(1)}\lesssim\|f_1\|_{\MXe_I}\|f_2\|_{\MXe_I}.
\end{align}

For the term $\CB_2$, setting $m=\frac{3}{2}$ in Lemma \ref{lem-tri-estimates} and following the same way as in \eqref{ineq-CB-tri-low-3rd}-\eqref{ineq-CB-tri-low-1st}, we also have
\begin{align}
    \begin{aligned}
      &(\ve+\ve^{\frac{3}{2}})\|\CT[\gve,f_1,f_2]\|_{L^2_I\dot H^{\frac{3}{2}}_x(L_\gamma^2)^{\prime}}\\
      \lesssim&\ve(\|\CT[\gve,f_1,\PP_0f_2]\|_{L^2_I\dot H^{\frac{3}{2}}_x(L_\gamma^2)^{\prime}}+\|\CT[\gve,f_1,\PP_0^{\perp}f_2]\|_{L^2_I\dot H^{\frac{3}{2}}_x(L_\gamma^2)^{\prime}})\\
      \lesssim&\ve\|\gve\|_{L^2_I\dot H^{\frac{3}{2}}_xL^2_v}
      \|f_1\|_{\widetilde{L}^{\infty}_I\dot H^{\ell}_xL^2_v}
      \|\PP_0f_2\|_{\widetilde{L}^{\infty}_I\dot H^{\ell}_x L_\gamma^2}\\
      &\quad+\|\gve\|_{\widetilde{L}^{\infty}_I\dot H^{\ell}_xL^2_v}
      \|f_1\|_{L^2_I\dot H^{\frac{3}{2}}_xL^2_v}
      \|\PP_0f_2\|_{\widetilde{L}^{\infty}_I\dot H^{\ell}_x L_\gamma^2}\\
      &\quad+\|\gve\|_{\widetilde{L}^{\infty}_I\dot H^{\ell}_xL^2_v}
      \|f_1\|_{\widetilde{L}^{\infty}_I\dot H^{\ell}_xL^2_v}
      \|\PP_0f_2\|_{L^2_I\dot H^{\frac{3}{2}}_x L_\gamma^2}\\
      &\quad+\|\gve\|_{\widetilde{L}^{\infty}_I\dot H^{\ell}_xL^2_v}
      \|f_1\|_{\widetilde{L}^{\infty}_I\dot H^{\ell}_xL^2_v}
      \|\PP_0^{\perp}f_2\|_{L^2_I\dot H^{\ell}_x L_\gamma^2}\\
      \lesssim&\ve(\ve^{-2\be}+\ve^{-\beta-\al(\ell-\frac{1}{2})}(1+\sqrt{\ve}))\|f_1\|_{\MXe_I}\|f_2\|_{\MXe_I}\\
      &\quad+\ve^{1+\frac{1}{2}}\ve^{-2\be-\al(\ell-\frac{1}{2})}\|f_1\|_{\MXe_I}\|f_2\|_{\MXe_I}\\
      \lesssim&\|f_1\|_{\MXe_I}\|f_2\|_{\MXe_I}.
      \end{aligned}\nonumber
\end{align}
Similarly, due to the symmetric structure, then we have
\begin{align}
    \CB_2&\lesssim\|f_1\|_{\MXe_I}\|f_2\|_{\MXe_I}.\nonumber
\end{align}

For the high-order term $\CB_3$, applying Lemma \ref{lem-tri-estimates}, we obtain
\begin{align}
    \begin{aligned}
      &\ve^{\beta}(\ve+\ve^{\frac{3}{2}})\|\CT[\gve,f_1,f_2]\|_{L^2_I\dot H^{\ell}_x(L_\gamma^2)^{\prime}}\\\lesssim& \ve^{1+\be}\|\gve\|_{\widetilde{L}^{\infty}_I\dot H^{\ell}_xL^2_v}
      \|f_1\|_{\widetilde{L}^{\infty}_I\dot H^{\ell}_xL^2_v}
      \|f_2\|_{L^2_I\dot H^{\ell}_x L_\gamma^2}\\
      \lesssim&\ve^{1-\be-\al(\ell-\frac{1}{2})}\|f_1\|_{\MXe_I}\|f_2\|_{\MXe_I}\\
      \lesssim&\|f_1\|_{\MXe_I}\|f_2\|_{\MXe_I}.
    \end{aligned}\nonumber
\end{align}
Hence,
\begin{align}
 \CB_3\lesssim\|f_1\|_{\MXe_I}\|f_2\|_{\MXe_I}. \label{ineq-CB-high}
\end{align}

Gathering \eqref{B0-es}-\eqref{ineq-CB-high}, we obtain that
\begin{align}
   \|\mathfrak{T}^{\ve}[\gve,f_1,f_2]\|_{\MXe_I}\lesssim\|f_1\|_{\MXe_I}\|f_2\|_{\MXe_I}.\label{CB-tri-part-1}
\end{align}
We complete the proof.
\end{proof}

\subsection{\texorpdfstring{Estimates for Trilinear Operator $\mathfrak{T}^{\ve}$}{Estimates for Trilinear Operator T-epsilon}}
We show the Operator $\mathfrak{T}^{\ve}$ is continuous on $\MXe_I\times\MXe_I\times\MXe_I$. By applying Lemma \ref{lem-semi-hypocoercivity} we have
\begin{align*}
 &\|\mathfrak{T}^{\ve}[f_1,f_2,f_3]\|_{\MXe_I}\\
 \lesssim&\ve\|\CT_{\mathrm{sym}}[f_1,f_2,f_3]\|_{L^2_I\dot H^{\frac{1}{2}}_x(L_\gamma^2)^{\prime}}\\
 &\quad+(\ve+\ve^{\frac{3}{2}})\|\CT_{\mathrm{sym}}[f_1,f_2,f_3]\|_{L^2_I\dot H^{\frac{3}{2}}_x(L_\gamma^2)^{\prime}}\\
 &\quad+\ve^{\beta}(\ve+\ve^{\frac{3}{2}})\|\CT_{\mathrm{sym}}[f_1,f_2,f_3]\|_{L^2_I\dot H^{\ell}_x(L_\gamma^2)^{\prime}}\\
 &\quad+(\ve+\ve^{\frac{3}{2}})(1+\ve^{\be})\|\CT_{\mathrm{sym}}[f_1,f_2,f_3]\|_{L^2_IL^2_x(L_\gamma^2)^{\prime}}.
\end{align*}
By Lemma \ref{lem-tri-estimates}, we obtain
\begin{align*}
\begin{aligned}
&\|\CT[f_1,f_2,f_3]\|_{L^2_I\dot H^{\frac{1}{2}}_x(L_\gamma^2)^{\prime}}\\\lesssim&2\|f_1\|_{\widetilde{L}^{\infty}_I\dot H^{\frac{1}{2}}_xL^2_v}\|f_2\|_{\widetilde{L}^{\infty}_I\dot H^{\ell}_xL^2_v}\|f_3\|_{L^2_I\dot H^{\ell}_x L_\gamma^2}\\
&\quad+\|f_1\|_{\widetilde{L}^{\infty}_I\dot H^{\ell}_xL^2_v}\|f_2\|_{\widetilde{L}^{\infty}_I\dot H^{\frac{1}{2}}_xL^2_v}\|f_3\|_{L^2_I\dot H^{\ell}_x L_\gamma^2}\\
 &\quad+\|f_1\|_{\widetilde{L}^{\infty}_I\dot H^{\frac{1}{2}}_xL^2_v}\|f_2\|_{\widetilde{L}^{\infty}_I\dot H^{\ell}_xL^2_v}\|f_3\|_{L^2_I\dot H^{\frac{3}{2}}_x L_\gamma^2}\\
   \lesssim&\ve^{-2\beta}(1+\sqrt{\ve})\|f_1\|_{\MXe_I}\|f_2\|_{\MXe_I}\|f_3\|_{\MXe_I}.
\end{aligned}
\end{align*}
Due to the symmetric structure, we can directly deduce similar bounds for all permutations. Hence,
\begin{align}
    \ve\|\CT_{\mathrm{sym}}[f_1,f_2,f_3]\|_{L^2_I\dot H^{\frac{1}{2}}_x(L_\gamma^2)^{\prime}}\lesssim\|f_1\|_{\MXe_I}\|f_2\|_{\MXe_I}\|f_3\|_{\MXe_I}\quad \text{for}~\be<\frac{1}{2}.\label{ineq-tri-non-lower}
\end{align}
Applying  Lemma \ref{lem-tri-estimates} with $m=\frac{3}{2}$, and the embedding $\dot H^{\ell}_x\hookrightarrow \dot H^{\frac{3}{2}}_x$, we. also have
\begin{align*}
&\|\CT[f_1,f_2,f_3]\|_{L^2_I\dot H^{\frac{3}{2}}_x(L_\gamma^2)^{\prime}}\\\lesssim&2\|f_1\|_{\widetilde{L}^{\infty}_I\dot H^{\frac{3}{2}}_xL^2_v}\|f_2\|_{\widetilde{L}^{\infty}_I\dot H^{\ell}_xL^2_v}\|f_3\|_{L^2_I\dot H^{\ell}_x L_\gamma^2}\\
&\quad+\|f_1\|_{\widetilde{L}^{\infty}_I\dot H^{\ell}_xL^2_v}\|f_2\|_{\widetilde{L}^{\infty}_I\dot H^{\frac{3}{2}}_xL^2_v}\|f_3\|_{L^2_I\dot H^{\ell}_x L_\gamma^2}\\
 &\quad+\|f_1\|_{\widetilde{L}^{\infty}_I\dot H^{\ell}_xL^2_v}\|f_2\|_{\widetilde{L}^{\infty}_I\dot H^{\ell}_xL^2_v}\|f_3\|_{L^2_I\dot H^{\frac{3}{2}}_x L_\gamma^2}\\
 \lesssim& \|f_1\|_{\widetilde{L}^{\infty}_I\dot H^{\ell}_xL^2_v}\|f_2\|_{\widetilde{L}^{\infty}_I\dot H^{\ell}_xL^2_v}\|f_3\|_{L^2_I\dot H^{\ell}_x L_\gamma^2}\\
   \lesssim&\ve^{-\frac{3}{2}\beta}(1+\sqrt{\ve})\|f_1\|_{\MXe_I}\|f_2\|_{\MXe_I}\|f_3\|_{\MXe_I},
\end{align*}
and
\begin{align}
    \begin{aligned}
        \|\CT[f_1,f_2,f_3]\|_{L^2_I\dot H^{\ell}_x(L_\gamma^2)^{\prime}}\lesssim&\|f_1\|_{\widetilde{L}^{\infty}_I\dot H^{\ell}_xL^2_v}\|f_2\|_{\widetilde{L}^{\infty}_I\dot H^{\ell}_xL^2_v}\|f_3\|_{L^2_I\dot H^{\ell}_x L_\gamma^2}\\
 \lesssim&\ve^{-\frac{3}{2}\beta}(1+\sqrt{\ve})\|f_1\|_{\MXe_I}\|f_2\|_{\MXe_I}\|f_3\|_{\MXe_I}.
    \end{aligned}
\end{align}
Similarly, they yields
\begin{align}
(\ve+\ve^{\frac{3}{2}})\|\CT_{\mathrm{sym}}[f_1,f_2,f_3]\|_{L^2_I\dot H^{\frac{3}{2}}_x(L_\gamma^2)^{\prime}}\lesssim\|f_1\|_{\MXe_I}\|f_2\|_{\MXe_I}\|f_3\|_{\MXe_I},\quad \text{for}~\be<\frac{1}{2},
\end{align}
and
\begin{align}
\ve^{\be}(\ve+\ve^{\frac{3}{2}})\|\CT_{\mathrm{sym}}[f_1,f_2,f_3]\|_{L^2_I\dot H^{\ell}_x(L_\gamma^2)^{\prime}}\lesssim\|f_1\|_{\MXe_I}\|f_2\|_{\MXe_I}\|f_3\|_{\MXe_I},\quad \text{for}~\be<\frac{1}{2}.\label{ineq-tri-non-higher}
\end{align}
We now consider the zeroth-order terms. By symmetry of the trilinear operator, we only present the proof for one part, as the remaining cases are analogous. Putting the decomposition $f=\PP_0 f+\PP_0^{\perp} f$ in $L^2_v$, we have
    \begin{align}
      \begin{aligned}
       &\|\CT[f_1,f_2,f_3]\|_{L^2_IL^2_x(L_\gamma^2)^{\prime}}\\
       \lesssim&\|\CT[\PP_0 f_1,\PP_0 f_2,\PP_0 f_3]\|_{L^2_IL^2_x(L_\gamma^2)^{\prime}}+\|\CT[\PP_0^{\perp} f_1,\PP_0^{\perp} f_2,\PP_0^{\perp} f_3]\|_{L^2_IL^2_x(L_\gamma^2)^{\prime}}\\
      &\quad+\|\CT[\PP_0^{\perp} f_1,\PP_0^{\perp} f_2,\PP_0 f_3]\|_{L^2_IL^2_x(L_\gamma^2)^{\prime}}+\|\CT[\PP_0 f_1,\PP_0^{\perp} f_2,\PP_0 f_3]\|_{L^2_IL^2_x(L_\gamma^2)^{\prime}}\\
      &\quad+\|\CT[\PP_0 f_1,\PP_0^{\perp} f_2,\PP_0^{\perp} f_3]\|_{L^2_IL^2_x(L_\gamma^2)^{\prime}}+\|\CT[\PP_0 f_1,\PP_0 f_2,\PP_0^{\perp} f_3]\|_{L^2_IL^2_x(L_\gamma^2)^{\prime}}\\
      &\quad+\|\CT[\PP_0 f_1^{\perp},\PP_0 f_2,\PP_0^{\perp} f_3]\|_{L^2_IL^2_x(L_\gamma^2)^{\prime}}+\|\CT[\PP_0 f_1^{\perp},\PP_0 f_2,\PP_0 f_3]\|_{L^2_IL^2_x(L_\gamma^2)^{\prime}}.
      \end{aligned}
    \end{align}
    In what follows, we shall treat the terms successively. By H\"{o}lder's inequality and Grgliardo--Nienberg inequality, we have
        \begin{align}\label{0th-ker1-ker2-ker3}
            \begin{aligned}
&\|\CT[f_1,f_2,f_3]\|_{L^2_IL^2_x(L_\gamma^2)^{\prime}}\\\lesssim&\Big(\Big(\int_{\BR}\|\PP_0f_1\|_{L^2_{\gamma}}^6\|\PP_0f_2\|_{L^2_{v}}^6\ud x\Big)^{\frac{1}{3}}\ud t\Big)^{\frac{1}{2}}\|\PP_0 f_3\|_{\widetilde{L}^{\infty}_IL^3_xL^2_v}\\
            &\quad+\Big(\Big(\int_{\BR}\|\PP_0f_2\|_{L^2_{\gamma}}^6\|\PP_0f_1\|_{L^2_{v}}^6\ud x\Big)^{\frac{1}{3}}\ud t\Big)^{\frac{1}{2}}\|\PP_0 f_3\|_{\widetilde{L}^{\infty}_IL^3_xL^2_v}\\
            \lesssim&\Big(\Big(\int_{\BR}\Big|\nabla_x\Big(\|\PP_0f_1\|_{L^2_{\gamma}}\|\PP_0f_2\|_{L^2_{v}}\Big)\Big|^2\ud x\Big)\ud t\Big)^{\frac{1}{2}}\| f_3\|_{\widetilde{L}^{\infty}_IH^{\frac{1}{2}}_xL^2_v}\\
            &\quad+\Big(\Big(\int_{\BR}\Big|\nabla_x\Big(\|\PP_0f_2\|_{L^2_{\gamma}}\|\PP_0f_1\|_{L^2_{v}}\Big)\Big|^2\ud x\Big)\ud t\Big)^{\frac{1}{2}}\| f_3\|_{\widetilde{L}^{\infty}_IH^{\frac{1}{2}}_xL^2_v}.
            \end{aligned}
        \end{align}
        Applying Lemma \ref{lem-homo-bi} , we have
        \begin{align}\label{0th-ker1-ker2}
            \begin{aligned}
                &\Big(\Big(\int_{\BR}\Big|\nabla_x\Big(\|\PP_0f_1\|_{L^2_{\gamma}}\|\PP_0f_2\|_{L^2_{v}}\Big)\Big|^2\ud x\Big)\ud t\Big)^{\frac{1}{2}}\\
                \lesssim&\|\PP_0f_1\|_{L^2_I\dot H^1_xL^2_v}\|\PP_0f_2\|_{\widetilde{L}^{\infty}_I\dot H^{\ell}_xL^2_{\ga}}\\
                &\quad+\|\PP_0f_2\|_{L^2_I\dot H^1_xL^2_v}\|\PP_0f_1\|_{\widetilde{L}^{\infty}_I\dot H^{\ell}_xL^2_{\ga}}\\
                \lesssim& \|\PP_0f_1\|_{L^2_I\dot H^1_xL^2_v}\|f_2\|_{\widetilde{L}^{\infty}_I\dot H^{\ell}_xL^2_{v}}\\
                &\quad+ \|\PP_0f_2\|_{L^2_I\dot H^1_xL^2_v}\|f_1\|_{\widetilde{L}^{\infty}_I\dot H^{\ell}_xL^2_{v}}\\
                \lesssim&\|f_1\|_{\MXe}\|f_2\|_{\MXe}.
            \end{aligned}
        \end{align}
        Putting \eqref{0th-ker1-ker2} into \eqref{0th-ker1-ker2-ker3}, we obtain
        \begin{align}\label{0th-ker1-ker2-ker3-final}
           \|\CT[\PP_0f_1,\PP_0f_2,\PP_0f_3]\|_{L^2_IL^2_x(L_\gamma^2)^{\prime}}\lesssim \|f_1\|_{\MXe}\|f_2\|_{\MXe}\|f_3\|_{\MXe}.
        \end{align}
         Next, we apply Lemma~\ref{lem-tri-estimates} with $m=0$, and then deal with each term on the right-hand side of the above expression by choosing suitable parameters $(\alpha_i,\beta_i,\gamma_i,r_i,\delta_i),\,i=1,\cdots,5$, respectively obtaining the following estimates:
\begin{align}
    \begin{aligned}
        &\|\CT[\PP_0^{\perp} f_1,\PP_0^{\perp} f_2,\PP_0f_3]\|_{L^2_IL^2_x(L_\gamma^2)^{\prime}}\\
        \lesssim&\|\PP_0^{\perp} f_1\|_{\widetilde{L}^{\infty}_IL^2_xL^2_v}\|\PP_0^{\perp} f_2\|_{L^2_IH^{\ell}_xL^2_v}\|\PP_0 f_3\|_{\widetilde{L}^{\infty}_IH^{\ell}_xL^2_{\ga}}\\
        &\quad+\|\PP_0^{\perp} f_2\|_{\widetilde{L}^{\infty}_IL^2_xL^2_v}\|\PP_0^{\perp} f_1\|_{L^2_IH^{\ell}_xL^2_v}\|\PP_0 f_3\|_{\widetilde{L}^{\infty}_I H^{\ell}_xL^2_{\ga}}\\
        &\quad+\|\PP_0^{\perp} f_1\|_{L^2_I H^{\ell}_xL^2_v}\|\PP_0^{\perp} f_2\|_{\widetilde{L}^{\infty}_I H^{\ell}_xL^2_v}\|\PP_0 f_3\|_{\widetilde{L}^{\infty}_IL^2_xL^2_{\ga}}\\
        \lesssim&\|f_1\|_{\widetilde{L}^{\infty}_IL^2_xL^2_v}\|\PP_0^{\perp} f_2\|_{L^2_IH^{\ell}_xL^2_v}\|f_3\|_{\widetilde{L}^{\infty}_IH^{\ell}_xL^2_{v}}\\
        &\quad+\|f_2\|_{\widetilde{L}^{\infty}_I H^{\frac{1}{2}}_xL^2_v}
        \|\PP_0^{\perp} f_1\|_{L^2_IH^{\ell}_xL^2_{\ga}}\|f_3\|_{\widetilde{L}^{\infty}_I H^{\ell}_xL^2_{v}}\\
        &\quad+\|\PP_0^{\perp} f_1\|_{L^2_I H^{\ell}_xL^2_{\ga}}\|f_2\|_{\widetilde{L}^{\infty}_I H^{\ell}_xL^2_v}\|f_3\|_{\widetilde{L}^{\infty}_IL^2_xL^2_{v}}\\
        \lesssim&\ve^{\frac{1}{2}-2\be}\|f_1\|_{\MXe_I}\|f_2\|_{\MXe_I}\|f_3\|_{\MXe_I},
    \end{aligned}
\end{align}
and
\begin{align}
     \begin{aligned}
        &\|\CT[\PP_0^{\perp} f_1,\PP_0^{\perp} f_2,\PP_0^{\perp} f_3]\|_{L^2_IL^2_x(L_\gamma^2)^{\prime}}\\
        \lesssim&\|\PP_0^{\perp} f_1\|_{\widetilde{L}^{\infty}_IH^{\frac{1}{2}}_xL^2_v}
        \|\PP_0^{\perp} f_2\|_{\widetilde{L}^{\infty}_IH^{\ell}_xL^2_v}
        \|\PP_0^{\perp} f_2\|_{L^2_IH^{\ell}_xL^2_{\ga}}\\
        &\quad+\|\PP_0^{\perp} f_2\|_{\widetilde{L}^{\infty}_IH^{\frac{1}{2}}_xL^2_v}
        \|\PP_0^{\perp} f_1\|_{\widetilde{L}^{\infty}_IH^{\ell}_xL^2_v}
        \|\PP_0^{\perp} f_2\|_{L^2_IH^{\ell}_xL^2_{\ga}}\\
        \lesssim&\ve^{\frac{1}{2}-2\be}\|f_1\|_{\MXe_I}\|f_2\|_{\MXe_I}\|f_3\|_{\MXe_I}.
    \end{aligned}
\end{align}
Besides, we have
\begin{align}
      \begin{aligned}
        &\|\CT[\PP_0 f_1,\PP_0^{\perp} f_2,\PP_0^{\perp}f_3]\|_{L^2_IL^2_x(L_\gamma^2)^{\prime}}\\
        \lesssim&\|\PP_0f_1\|_{\widetilde{L}^{\infty}_IL^2_xL^2_v}\|\PP_0^{\perp} f_2\|_{\widetilde{L}^{\infty}_IH^{\ell}_xL^2_v}\|\PP_0^{\perp} f_3\|_{L^2_IH^{\ell}_xL^2_{\ga}}\\
        &\quad+\|\PP_0f_2\|_{\widetilde{L}^{\infty}_IL^2_xL^2_v}\|\PP_0^{\perp} f_1\|_{\widetilde{L}^{\infty}_IH^{\ell}_xL^2_v}\|\PP_0^{\perp} f_3\|_{L^2_IH^{\ell}_xL^2_{\ga}}\\
        \lesssim&\|f_1\|_{\widetilde{L}^{\infty}_IL^2_xL^2_v}\|f_2\|_{\widetilde{L}^{\infty}_IH^{\ell}_xL^2_v}\|\PP_0^{\perp} f_3\|_{L^2_IH^{\ell}_xL^2_{\ga}}\\
        &\quad+\|f_2\|_{\widetilde{L}^{\infty}_IL^2_xL^2_v}\|f_1\|_{\widetilde{L}^{\infty}_IH^{\ell}_xL^2_v}\|\PP_0^{\perp} f_3\|_{L^2_IH^{\ell}_xL^2_{\ga}}\\
        \lesssim&\ve^{\frac{1}{2}-2\be}\|f_1\|_{\MXe}\|f_2\|_{\MXe}\|f_3\|_{\MXe}.
    \end{aligned}
\end{align}
By the same reasoning, one can derive
\begin{align}
& \|\CT[\PP_0 f_1,\PP_0 f_2,\PP_0^{\perp}f_3]\|_{L^2_IL^2_x(L_\gamma^2)^{\prime}}\lesssim\ve^{\frac{1}{2}-2\be}\|f_1\|_{\MXe}\|f_2\|_{\MXe}\|f_3\|_{\MXe},\\
& \|\CT[\PP_0 f_2,\PP_0^{\perp} f_1,\PP_0f_3]\|_{L^2_IL^2_x(L_\gamma^2)^{\prime}}\lesssim\ve^{\frac{1}{2}-2\be}\|f_1\|_{\MXe_I}\|f_2\|_{\MXe_I}\|f_3\|_{\MXe_I}.
\end{align}
Repeating the above argument yields
\begin{align}\label{0th-oth1-ker2-ker3}
   \|\CT[\PP_0 f_1^{\perp},\PP_0 f_2,\PP_0^{\perp} f_3]\|_{L^2_IL^2_x(L_\gamma^2)^{\prime}}&\lesssim\ve^{\frac{1}{2}-2\be}\|f_1\|_{\MXe_I}\|f_2\|_{\MXe_I}\|f_3\|_{\MXe_I},\\
   \|\CT[\PP_0 f_1^{\perp},\PP_0 f_2,\PP_0 f_3]\|_{L^2_IL^2_x(L_\gamma^2)^{\prime}}&\lesssim\ve^{\frac{1}{2}-2\be}\|f_1\|_{\MXe_I}\|f_2\|_{\MXe_I}\|f_3\|_{\MXe_I}.
\end{align}
From \eqref{0th-ker1-ker2-ker3-final}-\eqref{0th-oth1-ker2-ker3}, it follows that
\begin{align}\label{ineq-tri-non-0th}
\begin{aligned}
 (1+\ve^{\be})(\ve+\ve^{\frac{3}{2}})\|\CT[f_1,f_2,f_3]\|_{L^2_IL^2_x(L_\gamma^2)^{\prime}}&\lesssim\ve^{\frac{3}{2}-2\be}\|f_1\|_{\MXe_I}\|f_2\|_{\MXe_I}\|f_3\|_{\MXe_I}\\
 &\lesssim\|f_1\|_{\MXe_I}\|f_2\|_{\MXe_I}\|f_3\|_{\MXe_I},
\end{aligned}
\end{align}
provided that $\beta<\tfrac{1}{2}$ and $\varepsilon$ is sufficiently small.
%\PP_0 f_
Collecting the estimates in \eqref{ineq-tri-non-lower}-\eqref{ineq-tri-non-higher} with \eqref{ineq-tri-non-0th}, we conclude the estimate \eqref{tri-non-esti} for trilinear term in Proposition \ref{prop-fixed-point-lem-conditions}-(5).

\bibliographystyle{abbrv}
\bibliography{references}

\end{document}